\newtheorem{thm}{Theorem}[section]
\newtheorem{lemma}[thm]{Lemma}
\newtheorem{theorem}[thm]{Theorem}
\newtheorem{corollary}[thm]{Corollary}
\newtheorem{prop}[thm]{Proposition}
\newtheorem{claim}{Claim}[thm]
\theoremstyle{definition}
\newtheorem{definition}[thm]{Definition}
\DeclareMathOperator{\GF}{GF}
\DeclareMathOperator{\ex}{Ex}
\DeclareMathOperator{\AG}{AG}
\newcommand{\del}{\hspace{-0.75pt}\backslash}
\newcommand{\con}{\hspace{-0.75pt}/}
\newcommand{\nfd}{\ensuremath{(F_{7}^{-})^{*}}}
\newcommand{\agde}{\ensuremath{\AG(2, 3)\del e}}
\newcommand{\qalpha}{\ensuremath{\mathbb{Q}(\alpha)}}
\newcommand{\Z}{ \mathbb{Z} }
\newcommand{\field}{\ensuremath{\mathbb{F}}}
\newcommand{\psru}{\mathbb{S}}
\newcommand{\nreg}{\mathbb{U}_1}
\newcommand{\rank}{\ensuremath{r}}
\newcommand{\corank}{\ensuremath{\rank^{*}}}
\DeclareMathOperator{\closure}{cl}
\DeclareMathOperator{\fullclosure}{fcl}
\newcommand{\coclosure}{\ensuremath{\closure}^*}
\newcommand{\wheel}[1]{M(\mathcal{W}_{#1})}
\newcommand{\whirl}[1]{\mathcal{W}^{#1}}
\newcommand{\growfan}[3]{\ensuremath{{\boxtimes}_{#1}^{#2}(#3)}}
\newcommand{\smin}{-}       
\begin{document}

\title[On the relative importance of excluded minors]
{On the relative importance of excluded minors}

\author[Hall]{Rhiannon Hall}
\address{School of Information Systems, Computing and Mathematics,
Brunel University, Uxbridge UB8 3PH, United Kingdom}
\email{rhiannon.hall@brunel.ac.uk}

\author[Mayhew]{Dillon Mayhew}
\address{School of Mathematics, Statistics, and Operations Research\\
Victoria University of Wellington\\
New Zealand}
\email{dillon.mayhew@msor.vuw.ac.nz}
\thanks{The second author was supported by a Foundation for
Research Science \& Technology post-doctoral fellowship.}

\author[Van Zwam]{Stefan H. M. van Zwam}
\address{Department of Mathematics\\ Princeton University\\ United States}
\email{svanzwam@math.princeton.edu}
\thanks{The third author was supported by NWO}

\subjclass[2000]{05B35}
\date{\today}

\begin{abstract}
  If $\mathcal{E}$ is a set of matroids, then $\ex(\mathcal{E})$ denotes the set of matroids that have no minor isomorphic to a member of $\mathcal{E}$. If $\mathcal{E}'\subseteq \mathcal{E}$, we say that
  $\mathcal{E}'$ is \emph{superfluous} if
  $\ex(\mathcal{E}-\mathcal{E}')-\ex(\mathcal{E})$ contains
  only finitely many $3$-connected matroids.
  We characterize the superfluous subsets of six well-known
  collections of excluded minors.
\end{abstract}

\maketitle

\section{Introduction}\label{sct1}

If $\mathcal{E}$ is a set of matroids, then let $\ex(\mathcal{E})$
be the set of matroids such that $M\in \ex(\mathcal{E})$ if and only if $M$ has no minor isomorphic to a member of $\mathcal{E}$.
Thus, if
$\mathcal{P}=\{U_{2,4},F_{7}, F_{7}^{*},M(K_{3,3}), M(K_{5}), M^{*}(K_{3,3}), M^{*}(K_{5})\}$, then 
$\ex(\mathcal{P})$ is the set of graphic matroids of planar graphs.
Hall's classical theorem on the graphs without a
$K_{3,3}$-minor \cite{Hal43} can be interpreted as saying
that
\[\ex(\mathcal{P}-\{M(K_{5})\})-\ex(\mathcal{P})\]
contains only a single $3$-connected matroid, namely
$M(K_{5})$ itself.
This motivates the following definition:
if $\mathcal{E}$ is a set of matroids, then
$\mathcal{E}'\subseteq \mathcal{E}$ is a \emph{superfluous}
subset of $\mathcal{E}$ if
$\ex(\mathcal{E}-\mathcal{E}')-\ex(\mathcal{E})$ contains
only finitely many $3$-connected matroids.
Thus $\{M(K_{5})\}$ is a superfluous subset of
$\mathcal{P}$.
Obviously every subset of a superfluous subset is itself
superfluous.
In this article we characterize the superfluous subsets
of six well-known collections of excluded minors.

We will concentrate on the excluded minors for classes of matroids
representable over partial fields.
Partial fields were introduced by Semple and Whittle \cite{SW96},
prompted by Whittle's investigation of classes of
ternary matroids \cite{Whi95,Whi97}.
A \emph{partial field} is a pair $(R,G)$, where $R$ is a commutative
ring with identity, and $G$ is a subgroup of the multiplicative
group of $R$, such that $-1\in G$.
Note that every field, $\field$, can be seen as a partial field,
$(\field, \field-\{0\})$.
For more information on partial fields, and matroid representations
over them, we refer to \cite{PZ08lift}.

To date, the class of matroids representable over a partial
field has been characterized via excluded minors in only six
cases.
Those cases are: the fields $\GF(2)$, $\GF(3)$, and $\GF(4)$,
the regular partial field, and two of the partial fields
discovered by Whittle, namely the sixth-roots-of-unity
partial field, and the near-regular partial field.
We will characterize the superfluous subsets of all these
collections of excluded minors.

First of all, Tutte \cite{Tuthom} showed that the only excluded minor
for the class of $\GF(2)$-representable matroids is
$U_{2,4}$.
It is clear that the only superfluous subset in this case is
the empty set.
For a more interesting example, we examine the
\emph{regular} partial field,
$\mathbb{U}_{0}:=(\mathbb{Z},\{1,-1\})$.
Tutte also proved that the set of excluded minors for
$\mathbb{U}_{0}$-representable matroids is
$\{U_{2,4},F_{7},F_{7}^{*}\}$.
It is a well-known application of Seymour's Splitter Theorem
\cite{Sey80} that $F_{7}$ is a splitter for
the class $\ex(\{U_{2,4},F_{7}^{*}\})$.
The next theorem follows easily from this fact.

\begin{theorem}
\label{thm:regsuper}
The only non-empty superfluous subsets of $\{U_{2,4},F_{7},F_{7}^{*}\}$
are $\{F_{7}\}$ and $\{F_{7}^{*}\}$.
The only $3$-connected matroid in
$\ex(\{U_{2,4},F_{7}^{*}\})-\ex(\{U_{2,4},F_{7},F_{7}^{*}\})$
is $F_{7}$.
\end{theorem}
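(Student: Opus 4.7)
The plan is to use the stated splitter-theorem fact to settle both superfluous subsets and the second assertion of the theorem in a single stroke, and then to rule out every other non-empty subset by exhibiting an infinite family of $3$-connected witnesses.

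First I would apply the definition of splitter to the cited fact: every $3$-connected matroid in $\ex(\{U_{2,4}, F_{7}^{*}\})$ that has $F_{7}$ as a minor is isomorphic to $F_{7}$. The difference
\[
\ex(\{U_{2,4}, F_{7}^{*}\}) - \ex(\{U_{2,4}, F_{7}, F_{7}^{*}\})
\]
is exactly the set of matroids in $\ex(\{U_{2,4}, F_{7}^{*}\})$ that have an $F_{7}$-minor, so its only $3$-connected member is $F_{7}$. This simultaneously proves the second assertion of the theorem and shows that $\{F_{7}\}$ is superfluous. For $\{F_{7}^{*}\}$, the self-duality of $U_{2,4}$ and the relation $(F_{7}^{*})^{*}=F_{7}$ imply that $F_{7}^{*}$ is a splitter for the dual class $\ex(\{U_{2,4}, F_{7}\})$, and the same argument then shows $\{F_{7}^{*}\}$ is superfluous.

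For the non-superfluous direction, note that every non-empty subset of $\{U_{2,4}, F_{7}, F_{7}^{*}\}$ other than $\{F_{7}\}$ and $\{F_{7}^{*}\}$ contains either $\{U_{2,4}\}$ or $\{F_{7}, F_{7}^{*}\}$. Since any superset of a non-superfluous set is non-superfluous, it is enough to deal with these two cases. For $\{U_{2,4}\}$, the family $\{U_{2,n} : n \geq 4\}$ consists of pairwise non-isomorphic $3$-connected matroids, and a rank comparison shows that neither $F_{7}$ (rank $3$) nor $F_{7}^{*}$ (rank $4$) can appear as a minor of the rank-$2$ matroid $U_{2,n}$; the family therefore lies in $\ex(\{F_{7}, F_{7}^{*}\}) - \ex(\{U_{2,4}, F_{7}, F_{7}^{*}\})$. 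For $\{F_{7}, F_{7}^{*}\}$, the family $\{PG(r,2) : r \geq 2\}$ of binary projective geometries is pairwise non-isomorphic and $3$-connected, lies in $\ex(\{U_{2,4}\})$, and contains $F_{7}=PG(2,2)$ as a restriction for every $r\geq 2$; hence it lies in $\ex(\{U_{2,4}\}) - \ex(\{U_{2,4}, F_{7}, F_{7}^{*}\})$.

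I do not anticipate a genuine obstacle: the splitter-theorem input is supplied immediately before the statement, and the remainder reduces to a rank check, a duality argument, and two explicit infinite families. The only place worth attending to carefully is the monotonicity observation that collapses the non-superfluous direction to the two cases $\{U_{2,4}\}$ and $\{F_{7}, F_{7}^{*}\}$, and the verification that $PG(r,2)$ is indeed $3$-connected and binary for all $r\ge 2$.
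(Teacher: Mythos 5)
Your proposal is correct and follows essentially the same route as the paper: the splitter fact gives that $F_{7}$ is the unique $3$-connected member of $\ex(\{U_{2,4},F_{7}^{*}\})-\ex(\{U_{2,4},F_{7},F_{7}^{*}\})$, duality handles $\{F_{7}^{*}\}$, and the infinite families $\{U_{2,n}:n\ge 4\}$ and the binary projective geometries rule out every subset containing $\{U_{2,4}\}$ or $\{F_{7},F_{7}^{*}\}$, which are exactly the witnesses the paper uses.
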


Next we consider the excluded-minor characterization of
$\GF(3)$-representable matroids, due to Bixby and Seymour
\cite{Bix79, Sey79}.

\begin{theorem}
\label{thm:gf3excluded}
The set of excluded minors for $\GF(3)$-representable matroids
is $\{U_{2,5},U_{3,5},F_{7},F_{7}^{*}\}$.
\end{theorem}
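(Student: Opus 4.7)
The plan is to establish the characterization in two directions. First, one must verify that each of $U_{2,5}$, $U_{3,5}$, $F_{7}$, and $F_{7}^{*}$ is indeed an excluded minor: not $\GF(3)$-representable, but with every proper minor ternary. For $U_{2,5}$, the projective line over $\GF(3)$ has only four points, so no rank-$2$ ternary matroid can realise a five-point line, while its minors $U_{2,4}$ and $U_{1,4}$ are plainly ternary. By matroid duality the same applies to $U_{3,5}$. For $F_{7}$, one invokes the classical fact that the Fano matroid is representable precisely over fields of characteristic~$2$, and verifies that $F_{7}\del e$ (isomorphic to $M(K_{4})$) and $F_{7}\con e$ (a copy of $U_{2,3}$ with each point doubled) are both ternary; duality then handles $F_{7}^{*}$.

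The substantive direction is to show that every matroid $M$ with no minor in $\{U_{2,5},U_{3,5},F_{7},F_{7}^{*}\}$ is $\GF(3)$-representable. I would proceed by induction on $|E(M)|$. Since the class of ternary matroids is closed under direct sums and $2$-sums, the problem reduces to the case where $M$ is $3$-connected. For such an $M$, the hypothesis supplies strong structural information: long lines and their dual analogues are forbidden, and no Fano or dual-Fano configuration appears. One then seeks a ternary representation of a single-element deletion $M\del e$ via the inductive hypothesis, and tries to extend it consistently to all of $M$.

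The crux, and the main obstacle, is the extension step. One must argue that if no placement of $e$ is consistent with a ternary matrix for $M\del e$, then some forbidden minor necessarily appears --- either by producing a $U_{2,5}$ or $U_{3,5}$ on the line through $e$, or by extracting $F_{7}$ or $F_{7}^{*}$ from the clash between $e$ and inequivalent ternary representations of $M\del e$. The original proofs of Bixby and Seymour carry this out through a detailed case analysis of small rank-$3$ configurations, and I would mirror that approach. The principal difficulty lies in the combinatorial bookkeeping needed to pinpoint, in each obstructing configuration, the precise excluded minor that must be present.
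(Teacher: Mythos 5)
The paper does not prove this theorem at all; it is quoted as a known result of Bixby and Seymour, with the proof living entirely in their papers. So the relevant question is whether your proposal actually constitutes a proof, and it does not: the entire substantive direction is deferred. Your first paragraph (that each of $U_{2,5}$, $U_{3,5}$, $F_{7}$, $F_{7}^{*}$ is an excluded minor) is fine --- the observations that $\mathrm{PG}(1,3)$ has only four points, that $F_{7}\del e\cong M(K_{4})$ and $F_{7}\con e$ is a doubled triangle, and the appeal to duality, are all correct and complete. But the converse, that every matroid with none of these minors is ternary, is the theorem; your paragraphs describing it consist of a plan (``I would proceed by induction'', ``one then seeks\dots'', ``I would mirror that approach'') with the admission that ``the principal difficulty lies in the combinatorial bookkeeping.'' That bookkeeping is not incidental: it is the content of the result, and no part of it is carried out.

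Beyond the omission itself, the deletion--extension strategy as you state it has a known obstruction that you gesture at but do not resolve: $M\del e$ may admit inequivalent ternary representations, and the inductive hypothesis only hands you \emph{some} representation, not necessarily one that extends to $M$. Making this work requires the unique-representability theorem for $3$-connected ternary matroids (Brylawski--Lucas), together with connectivity results guaranteeing elements $x,y$ for which $M\del x$, $M\del y$, and $M\del x,y$ remain suitably connected so that the two single-element deletions can be compared; this is precisely the machinery Seymour's proof is built on. Without stating and invoking these ingredients, the step ``if no placement of $e$ is consistent, then some forbidden minor appears'' is an assertion, not an argument. As written, the proposal is a correct high-level description of the shape of the known proof, but it is not a proof.
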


\begin{theorem}
\label{thm:gf3super}
The only non-empty superfluous subsets of
$\{U_{2,5},U_{3,5},F_{7},F_{7}^{*}\}$ are
$\{F_{7}\}$ and $\{F_{7}^{*}\}$.
The only $3$-connected matroid in
$\ex(\{U_{2,5},U_{3,5},F_{7}^{*}\})-\ex(\{U_{2,5},U_{3,5},F_{7},F_{7}^{*}\})$
is $F_{7}$.
\end{theorem}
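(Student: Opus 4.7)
My plan mirrors Theorem 1.1: I will establish that $F_7$ is a splitter for the class $\ex(\{U_{2,5}, U_{3,5}, F_7^*\})$. This immediately gives part (ii), shows that $\{F_7\}$ is superfluous, and by matroid duality---using $U_{2,5}^* = U_{3,5}$ and $F_7^{**} = F_7$---yields the analogous conclusion for $\{F_7^*\}$. Since every subset of a superfluous set is superfluous, to complete the theorem I need only prove that each of $\{U_{2,5}\}$, $\{U_{3,5}\}$, and $\{F_7, F_7^*\}$ is \emph{not} superfluous.

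The splitter property is verified via Seymour's Splitter Theorem by ruling out $3$-connected single-element extensions and coextensions of $F_7$ inside the class. For extensions: since $F_7 = PG(2,2)$ is a projective plane, any simple rank-$3$ extension $N$ adds an element $e$ either in general position (on no Fano line) or on exactly one Fano line. In the first case, $N/e$ has no nontrivial parallel class and its simplification is $U_{2,7}$, so $N$ has a $U_{2,5}$ minor. In the second case, the three elements of the extended line form a single parallel class in $N/e$, and the simplification has five parallel classes, namely $U_{2,5}$; again $N$ has a $U_{2,5}$ minor. For coextensions I dualize: a $3$-connected coextension $N$ of $F_7$ in the class dualizes to a $3$-connected simple extension $N^*$ of $F_7^*$ in $\ex(\{U_{3,5}, U_{2,5}, F_7\})$. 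I would then enumerate the simple rank-$4$ extensions of $F_7^*$ via its modular cuts and verify that each contains a $U_{3,5}$ minor produced by contracting a suitable element.

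To rule out the remaining candidate superfluous subsets, I exhibit explicit infinite $3$-connected families in the relevant differences. The uniform matroids $\{U_{2,n} : n \ge 5\}$ all lie in $\ex(\{U_{3,5}, F_7, F_7^*\}) - \ex(\{U_{2,5}, U_{3,5}, F_7, F_7^*\})$: each has rank $2$, precluding $U_{3,5}$, $F_7$, and $F_7^*$ as minors, yet each contains $U_{2,5}$, showing $\{U_{2,5}\}$ is not superfluous; dually $\{U_{n-2,n} : n \ge 5\}$ handles $\{U_{3,5}\}$. For $\{F_7, F_7^*\}$, the binary projective geometries $\{PG(n,2) : n \ge 2\}$ are $3$-connected, contain neither $U_{2,5}$ nor $U_{3,5}$ (being binary), and each contains $F_7 = PG(2,2)$ as a minor. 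The main obstacle is the coextension step: while the extension analysis at $F_7$ reduces to a single structural observation thanks to the rigidity of $PG(2,2)$, the dualized analysis at $F_7^*$ requires a more delicate rank-$4$ case check over the several inequivalent positions for the new element, and this is where the proof demands the most care.
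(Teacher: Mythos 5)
Your overall architecture matches the paper exactly: prove that $F_{7}$ is a splitter for $\ex(\{U_{2,5},U_{3,5},F_{7}^{*}\})$, dualize, and kill the remaining candidate subsets with rank-$2$ uniform matroids, their duals, and binary projective geometries. Your extension analysis at $F_{7}$ is also the paper's: the new element lies on at most one Fano line (two lines would force a parallel pair), and in either case the simplification of the contraction is $U_{2,5}$ or $U_{2,7}$.

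The gap is in the coextension step, and it is not merely a matter of ``more care'': the verification you propose is based on a false claim. It is not true that every $3$-connected simple single-element extension of $F_{7}^{*}$ has a $U_{3,5}$-minor. For instance, $\AG(3,2)$ satisfies $\AG(3,2)\del e\cong F_{7}^{*}$ and is binary, so it has no $U_{2,5}$- or $U_{3,5}$-minor whatsoever; the same holds for every binary extension of $F_{7}^{*}$. What you must actually show is that every such extension has a minor in $\{U_{2,5},U_{3,5},F_{7}\}$. The binary extensions are disposed of by the dual of Proposition~\ref{prop:f7splitter2}, but the genuinely hard case is an extension that has no $U_{2,5}$- or $U_{3,5}$-minor yet is neither binary nor ternary; such matroids exist (the class $\ex(\{U_{2,5},U_{3,5}\})$ properly contains the union of the binary and ternary matroids), and a modular-cut enumeration restricted to ``positions of the new element'' in a representation will miss them, since they need not be representable at all. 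The paper closes exactly this case by invoking the excluded-minor characterization of the class of matroids that are binary or ternary together with \cite[Theorem~4.1]{SW96b}: the offending $8$-element matroid must be the relaxation of $\AG(3,2)$, which is then checked to have an $F_{7}^{*}$-minor. Without this ingredient (or an equivalently complete case analysis over all abstract extensions), your proof of the splitter property is incomplete. The remainder of your argument (the non-superfluousness of $\{U_{2,5}\}$, $\{U_{3,5}\}$, and $\{F_{7},F_{7}^{*}\}$) is correct and identical to the paper's.
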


The set of excluded minors for $\GF(4)$-representable
matroids was characterized by Geelen, Gerards, and Kapoor 
\cite{GGK}.

\begin{theorem}\label{thm:gf4excluded}
The set of excluded minors for the class of $\GF(4)$-representable
matroids is $\{U_{2,6},U_{4,6},F_7^-,\nfd,P_6,P_8,P_8^{=}\}$.
\end{theorem}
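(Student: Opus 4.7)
The plan is to follow the two-stage strategy due to Geelen, Gerards, and Kapoor \cite{GGK}. First, we verify that each of the seven matroids $U_{2,6}, U_{4,6}, F_7^-, \nfd, P_6, P_8, P_8^=$ is genuinely an excluded minor: none is $\GF(4)$-representable, but every proper minor is. The non-representability of $U_{2,6}$ and $U_{4,6}$ is immediate from the fact that $\GF(4)$ has only five non-zero elements; the remaining five cases are verified by direct, finite matrix checks on these small matroids and on each of their single-element deletions and contractions.

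The difficult direction is completeness. Let $M$ be a minimum-sized excluded minor not appearing in the list. Routine arguments show that $M$ is $3$-connected and that every proper minor of $M$ is $\GF(4)$-representable. The crucial input is the uniqueness (up to field automorphism and projective equivalence) of $\GF(4)$-representations of $3$-connected $\GF(4)$-representable matroids, a stabilizer-type result going back to Kahn. For each element $e$, both $M\setminus e$ and $M/e$ are $\GF(4)$-representable, and the central issue is whether their representations can be reconciled into a $\GF(4)$-representation of $M$ itself. Uniqueness provides strong control: a representation of $M\setminus e$ can in principle be extended to $M$ by adjoining a column for $e$ whose entries are forced, up to allowable operations, by the representation of $M/e$. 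If every such attempted extension fails, then one obtains severe structural restrictions on $M$.

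The main obstacle is controlling all the ways in which this extension can fail. Failures originate from low connectivity in minors of $M$, from the possibility of inequivalent $\GF(4)$-representations of common minors, and from local obstructions supported on triads, triangles and bowties. The bulk of the Geelen--Gerards--Kapoor proof is a long case analysis in which these obstructions are traced back to the specific configurations $F_7^-, \nfd, P_6, P_8$, and $P_8^=$. Setting up the inductive scaffolding so that exactly these seven matroids are isolated, with no others surviving the case split, is the technical heart of the argument and the step where any independent approach would have to do the most work.
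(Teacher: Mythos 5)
The paper does not prove this statement: Theorem~\ref{thm:gf4excluded} is imported verbatim from Geelen, Gerards, and Kapoor \cite{GGK}, and the citation is the entire ``proof''. So the relevant comparison is between your outline and the argument in \cite{GGK} itself.

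Your outline correctly identifies the architecture of that argument: the finite verification that each of the seven matroids is an excluded minor, the reduction to a $3$-connected minimum-sized counterexample (legitimate because $\GF(4)$-representability is closed under duality, direct sums, and $2$-sums), and the central role of Kahn's theorem that $3$-connected $\GF(4)$-representable matroids are uniquely representable over $\GF(4)$. But as written it is not a proof. Everything that makes the theorem true---the demonstration that whenever a column for $e$ cannot be adjoined to a representation of $M\setminus e$ consistently with a representation of $M/e$, the obstruction forces a minor isomorphic to one of $F_7^-$, $\nfd$, $P_6$, $P_8$, $P_8^{=}$ (or $U_{2,6}$, $U_{4,6}$) and to nothing else---is deferred to a ``long case analysis'' that you describe but do not carry out. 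That analysis is the content of the theorem: it occupies most of \cite{GGK}, and needs, among other things, connectivity information well beyond the bare $3$-connectedness of the minimal counterexample, together with a delicate treatment of inequivalent representations of the relevant minors. If your intention is to match the paper, a citation to \cite{GGK} suffices and the sketch is redundant; if your intention is to prove the theorem, the gap is essentially the entire proof.
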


Let $\mathcal{O}$ be the set of excluded minors in
Theorem \ref{thm:gf4excluded}.
Geelen, Oxley, Vertigan, and Whittle showed the following:

\begin{theorem}[{\cite[Theorem 1.1]{GOVW00}}]\label{thm:GOVW02}
  Let $M$ be a $3$-connected matroid. Then one of the following holds:
  \begin{enumerate}
    \item $M$ is $\GF(4)$-representable;
    \item $M$ has a minor isomorphic to one of $\mathcal{O}\smin \{P_8,P_8^{=}\}$;
    \item $M$ is isomorphic to $P_8^{=}$;
    \item $M$ is isomorphic to a minor of $S(5,6,12)$.
  \end{enumerate}
\end{theorem}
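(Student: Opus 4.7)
The statement is logically equivalent to showing that if $M$ is $3$-connected, not $\GF(4)$-representable, avoids every matroid in $\mathcal{O}\smin\{P_8,P_8^{=}\}$ as a minor, and is not isomorphic to $P_8^{=}$, then $M$ is a minor of $S(5,6,12)$. Indeed, by Theorem~\ref{thm:gf4excluded} any non-$\GF(4)$-representable matroid has some minor in $\mathcal{O}$; if that minor lies in $\mathcal{O}\smin\{P_8,P_8^{=}\}$ we are in case (ii), so the only remaining scenario is that $M$ has a $P_8$- or $P_8^{=}$-minor and no other excluded-minor from $\mathcal{O}$. Let $\mathcal{M}$ denote the class of $3$-connected matroids with a $P_8$- or $P_8^{=}$-minor and no minor in $\mathcal{O}\smin\{P_8,P_8^{=}\}$; the task is to show $\mathcal{M}\subseteq\{P_8^{=}\}\cup\{\text{$3$-connected minors of }S(5,6,12)\}$.

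I would argue by induction on $|E(M)|$ using Seymour's Splitter Theorem \cite{Sey80}. The base cases $|E(M)|=8$ give $M\in\{P_8,P_8^{=}\}$, and $P_8$ is a minor of $S(5,6,12)$. For the inductive step, take $M\in\mathcal{M}$ with $|E(M)|\geq 9$. By the Splitter Theorem, either $M$ has a $3$-connected single-element deletion or contraction $N$ that still contains a $P_8$- or $P_8^{=}$-minor, or $M$ is built from a smaller member of $\mathcal{M}$ by a wheel/whirl-style construction; the latter possibilities can be eliminated by using the rigidity forced by forbidding $U_{2,6}$, $U_{4,6}$, $F_7^{-}$, $\nfd$, and $P_6$ simultaneously. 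By induction, such an $N$ is $P_8^{=}$ or a minor of $S(5,6,12)$, so the work reduces to classifying the $3$-connected single-element extensions and coextensions of each such $N$: for each candidate, one checks that it either contains one of the five ``small'' excluded minors (pushing it outside $\mathcal{M}$) or embeds as a minor of $S(5,6,12)$.

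This case analysis is the heart of the proof and exploits the structure of $S(5,6,12)$ essentially: its automorphism group is the Mathieu group $M_{12}$, which acts very transitively on the ground set, so the isomorphism classes of minors to be considered collapse dramatically. For each minor $N$, the $3$-connected extensions of $N$ correspond to suitable modular cuts in the lattice of flats of $N$ (dually for coextensions), and each candidate is tested against the forbidden list: $F_7^{-}$ and $\nfd$ control binary-like $3$-circuits and $3$-cocircuits, $P_6$ constrains rank-$3$ minors, and $U_{2,6},U_{4,6}$ bound line and coline lengths. Together these forbid enough local structure that the permissible extensions of any minor of $S(5,6,12)$ are few and can be enumerated.

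The principal obstacle is the scale of the enumeration: $S(5,6,12)$ has $12$ elements and a sizeable family of non-isomorphic $3$-connected minors, each with its own list of candidate extensions. My plan would be first to obtain an a~priori bound on the rank and corank of any $M\in\mathcal{M}$ — probably by showing that once $r(M)\geq 7$ or $r^{*}(M)\geq 7$ one of $U_{2,6},U_{4,6},P_6,F_7^{-},\nfd$ must appear — thereby confining the enumeration to a finite range, and then to work through that range systematically, leveraging the $M_{12}$-symmetry and, where necessary, computer-assisted matroid enumeration to guarantee completeness. The outcome is exactly that the only matroids in $\mathcal{M}$ outside the minors of $S(5,6,12)$ is $P_8^{=}$.
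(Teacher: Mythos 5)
The paper does not prove this statement at all: it is quoted verbatim from Geelen, Oxley, Vertigan, and Whittle \cite[Theorem 1.1]{GOVW00}, so there is no in-paper proof to compare yours against. Your reduction of the theorem to the claim that every $3$-connected matroid with a $P_8$- or $P_8^{=}$-minor and no minor in $\mathcal{O}\smin\{P_8,P_8^{=}\}$ is either $P_8^{=}$ or a minor of $S(5,6,12)$ is correct (and is essentially Theorem 1.5 of \cite{GOVW00}, also quoted in this paper), and your overall strategy --- induction via the Splitter Theorem, with $S(5,6,12)$ as the maximal object and $P_8^{=}$ as a sporadic matroid admitting no good extensions --- does match the spirit of the cited proof.

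However, what you have written is a plan, not a proof, and the gap is precisely where the entire content of the theorem lives. Every substantive step is asserted rather than argued: you say the wheel/whirl alternative in the Splitter Theorem ``can be eliminated by using the rigidity forced by forbidding'' the five small excluded minors (in fact it is vacuous here, since wheels and whirls are $\GF(4)$-representable and have no $P_8$-minor, but you do not say this); you say a rank/corank bound can ``probably'' be obtained; and the classification of the $3$-connected single-element extensions and coextensions of each relevant minor of $S(5,6,12)$ --- the finite but large case analysis that constitutes the whole proof --- is deferred to ``systematic'' enumeration, possibly computer-assisted, without exhibiting a single case. You also do not verify the base facts your induction rests on, e.g.\ that $P_8$ is a minor of $S(5,6,12)$ and that $P_8^{=}$ has no $3$-connected extension or coextension avoiding $\mathcal{O}\smin\{P_8,P_8^{=}\}$. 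As it stands, nothing beyond the (easy) logical reduction has actually been proved; to make this a proof you would need to carry out the extension/coextension analysis in full or explicitly defer to \cite{GOVW00}, as the paper does.
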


This implies that $\{P_{8},P_{8}^{=}\}$ is a superfluous
subset of $\mathcal{O}$.
We complement this theorem by showing that it is best possible:

\begin{theorem}\label{thm:GOVWcomplement}
The only superfluous subsets of $\mathcal{O}$ are the subsets
of $\{P_{8},P_{8}^{=}\}$.
The only $3$-connected matroids in
$\ex(\mathcal{O}-\{P_{8},P_{8}^{=}\})-\ex(\mathcal{O})$
are isomorphic to $P_{8}^{=}$, or minors of
$S(5,6,12)$.
\end{theorem}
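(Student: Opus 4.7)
The plan is to handle the two assertions of the theorem separately.

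For the structural assertion, let $M$ be any 3-connected matroid in $\ex(\mathcal{O}-\{P_8,P_8^=\})-\ex(\mathcal{O})$, and apply Theorem \ref{thm:GOVW02}. Alternative (i) is excluded because $M \notin \ex(\mathcal{O})$, and alternative (ii) is excluded because $M$ has no minor in $\mathcal{O}-\{P_8,P_8^=\}$. Hence $M \cong P_8^=$ or $M$ is a minor of $S(5,6,12)$. Conversely, $P_8^=$ and every 3-connected minor of $S(5,6,12)$ lying outside $\ex(\mathcal{O})$ does fall into this difference, which is a finite check against the list $\mathcal{O}$. As a by-product, this establishes that $\{P_8,P_8^=\}$ itself is superfluous.

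For the remaining claim it is enough to show that $\{N\}$ is non-superfluous for every $N \in \mathcal{O}-\{P_8,P_8^=\}$; that is, to exhibit an infinite family of 3-connected matroids in $\ex(\mathcal{O}-\{N\})-\ex(\mathcal{O})$. Since $\mathcal{O}$ is closed under matroid duality, with $U_{2,6}^* = U_{4,6}$, $(F_7^-)^* = \nfd$, and $P_6$ self-dual, it suffices to treat $N \in \{U_{2,6}, F_7^-, P_6\}$. The case $N = U_{2,6}$ is immediate: the family $\{U_{2,n} : n \geq 7\}$ consists of 3-connected matroids each containing $U_{2,6}$, and since every member has rank $2$, no minor of rank $\geq 3$ — and hence none of $U_{4,6}$, $F_7^-$, $\nfd$, $P_6$, $P_8$, $P_8^=$ — can occur.

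For $N \in \{F_7^-, P_6\}$, I would construct an infinite family of 3-connected ternary matroids each having $N$ as a minor. Ternariness immediately rules out $U_{2,6}$ and $U_{4,6}$ (and also $F_7, F_7^*$), so only $\nfd$, the other of $\{F_7^-, P_6\}$, $P_8$, and $P_8^=$ remain to be excluded. Natural candidate families, suggested by the wheel, whirl, and growing-fan operations already flagged in the preamble, are iterated generalized parallel connections (equivalently $3$-sums) of $N$ with copies of $M(\mathcal{W}_n)$ along a fixed triangle of $N$; such an operation preserves $\GF(3)$-representability, maintains 3-connectivity, keeps $N$ as a minor, and produces pairwise non-isomorphic matroids of unbounded rank.

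The principal obstacle is precisely the verification, for the constructed families, that no member contains any of $\nfd$, $P_8$, $P_8^=$, or the complementary matroid from $\{F_7^-, P_6\}$ as a minor. Because each of these forbidden minors has small ground set, one can argue by tracking how any putative embedding must interact with the gluing triangle of the iterated construction, reducing the problem to a finite case analysis at the base of the iteration. I expect this excluded-minor bookkeeping, rather than the construction itself, to be the main technical work; the remainder of the argument reduces either to Theorem \ref{thm:GOVW02} or to elementary rank considerations.
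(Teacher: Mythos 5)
Your overall skeleton matches the paper's: Theorem \ref{thm:GOVW02} disposes of the structural assertion and of the superfluity of $\{P_8,P_8^=\}$, duality reduces the negative direction to $U_{2,6}$, $F_7^-$, and $P_6$, the family $\{U_{2,n}: n\geq 7\}$ handles $U_{2,6}$, and growing fans on a ternary base matroid handles $F_7^-$ (the paper uses an $8$-element ternary matroid $M_8$ with an $F_7^-$-restriction). The deferred ``bookkeeping'' is also genuinely the technical heart, and the paper's tool for it is Theorem \ref{thm:nofanfan}: because none of the forbidden matroids has a $4$-element fan, an unwanted minor of $\growfan{T}{n}{M}$ forces the same minor in $M$ or in $\Delta_T(M)$, so the check really does reduce to two small matroids.

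However, your plan for the case $N=P_6$ cannot work as stated: you propose an infinite family of $3$-connected \emph{ternary} matroids each having $P_6$ as a minor, but $P_6$ is not ternary --- it has a $U_{2,5}$-minor (contract a point off its unique $3$-point line), which is exactly why the paper can pass from $\{U_{2,6},U_{4,6},P_6\}$ to $\{U_{2,5},U_{3,5}\}$ in Corollary \ref{cor:P8splitter}. So no ternary matroid contains $P_6$ as a minor, and ternariness cannot be the mechanism that kills $U_{2,6}$ and $U_{4,6}$ here. The paper instead takes a $7$-element, $\GF(8)$-representable base matroid $M_7$ with a $P_6$-restriction: $\GF(8)$-representability of $M_7$ and $\Delta_T(M_7)$ excludes $F_7^-$ and $\nfd$, their size excludes $P_8$ and $P_8^=$, and short ad hoc arguments (using that $\Delta$-$Y$ along $T$ is invertible) exclude $U_{2,6}$ and $U_{4,6}$. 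You would need to replace your ternary family with something of this kind for the $P_6$ case; for the $F_7^-$ case your ternary strategy is sound, but you still owe the reduction lemma (Theorem \ref{thm:nofanfan} or an equivalent) and the explicit finite checks on the base matroid and its $\Delta$-$Y$ image.
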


Let $\psru:=(\mathbb{C},\{z\in\mathbb{C}\mid z^{6}=1\})$ be the
\emph{sixth-roots-of-unity} partial field, so that
a matroid is $\psru$-representable if and only if it
is both $\GF(3)$- and $\GF(4)$-representable.
By combining Theorems \ref{thm:gf3excluded} and
\ref{thm:gf4excluded}, 
Geelen, Gerards, and Kapoor derived the following result
\cite[Corollary 1.4]{GGK}.

\begin{theorem}
\label{thm:sruexcluded}
The set of excluded minors for the class of
$\psru$-representable matroids is
$\{U_{2,5},U_{3,5},F_{7},F_{7}^{*},F_{7}^{-},\nfd,P_{8}\}$.
\end{theorem}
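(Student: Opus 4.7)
The plan is to derive the theorem by combining the two excluded-minor characterizations of Theorems \ref{thm:gf3excluded} and \ref{thm:gf4excluded}, exploiting the fact that a matroid is $\psru$-representable if and only if it is both $\GF(3)$- and $\GF(4)$-representable. The argument has two parts: an inclusion that confines the excluded minors for $\psru$ to an explicit eleven-element pool, followed by a case analysis pruning this pool to the seven matroids in the theorem.

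For the inclusion, I would let $M$ be any excluded minor for $\psru$-representability. Then $M$ fails representability over at least one of $\GF(3)$ or $\GF(4)$. If, say, $M$ is not $\GF(3)$-representable, then Theorem \ref{thm:gf3excluded} supplies some $N \in \{U_{2,5}, U_{3,5}, F_7, F_7^*\}$ as a minor of $M$; but every proper minor of $M$ is $\psru$-representable, and hence $\GF(3)$-representable, forcing $N = M$. A symmetric argument using Theorem \ref{thm:gf4excluded} handles the case that $M$ is not $\GF(4)$-representable. Thus every excluded minor for $\psru$ lies in $\mathcal{C} := \{U_{2,5}, U_{3,5}, F_7, F_7^*\} \cup \mathcal{O}$, an eleven-element set.

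For the case analysis, I would examine each element of $\mathcal{C}$ in turn. For the four $\GF(3)$-excluded members, $\GF(3)$-representability of proper minors is automatic, so only $\GF(4)$-representability needs to be checked: the proper minors of $U_{2,5}$ and $U_{3,5}$ are uniform matroids of small rank and corank, while the proper minors of $F_7$ and $F_7^*$ have rank and size too small to accommodate any element of $\mathcal{O}$, so all four candidates survive. For the seven elements of $\mathcal{O}$, $\GF(4)$-representability of proper minors is automatic; since none of them is itself $\GF(3)$-excluded, a proper minor fails $\GF(3)$-representability if and only if the candidate itself does. The ternary matroids $F_7^-$, $\nfd$, and $P_8$ therefore survive as excluded minors, whereas $U_{2,6}, U_{4,6}, P_6$, and $P_8^=$ are non-ternary and so each contains some element of $\{U_{2,5}, U_{3,5}, F_7, F_7^*\}$ as a proper minor, eliminating them. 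The union of survivors is exactly the list in the theorem.

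The main obstacle will be pinning down the ternary versus non-ternary status of the specific named matroids $F_7^-$, $\nfd$, $P_6$, $P_8$, and $P_8^=$. These are standard facts documented in \cite{GGK} and in matroid reference texts; each is settled either by exhibiting an explicit $\GF(3)$-representation of the matroid or by exhibiting a $\GF(3)$-excluded matroid as a proper minor. The remaining work is routine bookkeeping on ranks, cardinalities, and proper-minor lists of small matroids.
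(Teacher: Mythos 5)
Your derivation is correct, and it is exactly the route the paper intends: the paper offers no proof of its own for Theorem~\ref{thm:sruexcluded}, but attributes it to Geelen, Gerards, and Kapoor as a consequence of combining Theorems~\ref{thm:gf3excluded} and~\ref{thm:gf4excluded}, which is precisely your two-part argument (confine the excluded minors to the union of the two lists, then prune by checking which candidates have all proper minors representable over the other field). One small slip in the pruning step: proper minors of $F_{7}$ and $F_{7}^{*}$ are \emph{not} all too small in rank and size to contain a member of $\mathcal{O}$ --- for instance $F_{7}\del e$ and $P_{6}$ both have rank $3$ and six elements --- so the correct justification is that $F_{7}$ and $F_{7}^{*}$ are binary, hence all their minors are binary and therefore $\GF(4)$-representable, whereas every member of $\mathcal{O}$ is non-binary.
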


Let $\mathcal{S}$ be the set of excluded minors in
Theorem \ref{thm:sruexcluded}.

\begin{theorem}
\label{thm:srusuper}
The only superfluous subsets of $\mathcal{S}$ are the
subsets of $\{F_{7},P_{8}\}$ and $\{F_{7}^{*},P_{8}\}$.
The only $3$-connected matroids in
$\ex(\mathcal{S}-\{F_{7},P_{8}\})-\ex(\mathcal{S})$
are isomorphic to $F_{7}$, or minors of
$S(5,6,12)$.
\end{theorem}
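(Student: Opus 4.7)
The argument splits into two halves: establishing that $\{F_7, P_8\}$ is superfluous together with the claimed description of the residual $3$-connected matroids (the corresponding claim for $\{F_7^*, P_8\}$ then follows by duality, since $\ex(\mathcal{S})$ is closed under duality), and ruling out every other subset of $\mathcal{S}$ as superfluous.

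For the first half, let $M$ be a $3$-connected matroid in $\ex(\mathcal{S} - \{F_7, P_8\}) - \ex(\mathcal{S})$; then $M$ has no minor in $\{U_{2,5}, U_{3,5}, F_7^*, F_7^-, \nfd\}$ and has some minor in $\{F_7, P_8\}$. If $M$ has an $F_7$-minor, Theorem~\ref{thm:gf3super} immediately forces $M \cong F_7$, since $M$ is $3$-connected with an $F_7$-minor and no minor in $\{U_{2,5}, U_{3,5}, F_7^*\}$. Otherwise $M$ has a $P_8$-minor but no $F_7$-minor; Theorem~\ref{thm:gf3excluded} then shows $M$ is $\GF(3)$-representable, while the $P_8$-minor blocks $\GF(4)$-representability. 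I would next verify that $M$ avoids every member of $\mathcal{O} - \{P_8, P_8^{=}\}$: the matroids $U_{2,6}$ and $U_{4,6}$ via the minor relations $U_{2,6} - e \cong U_{2,5}$ and $U_{4,6}/e \cong U_{3,5}$, the matroids $F_7^-$ and $\nfd$ by hypothesis, and $P_6$ via the easy observation that contracting an element outside its unique non-spanning circuit yields a $U_{2,5}$-minor. Theorem~\ref{thm:GOVW02} then leaves $M \cong P_8^{=}$ or $M$ a minor of $S(5,6,12)$, and a direct check that $P_8^{=}$ has a minor in $\{U_{2,5}, U_{3,5}, F_7^*, F_7^-, \nfd\}$ (using, for example, the self-duality and non-$\GF(3)$-representability of $P_8^{=}$, so that any $\GF(3)$-excluded minor forces a partner in $\{U_{2,5}, U_{3,5}, F_7^*\}$) rules out the first option.

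For the non-superfluity half, the duality involution on $\mathcal{S}$ (exchanging $U_{2,5}\leftrightarrow U_{3,5}$, $F_7 \leftrightarrow F_7^*$, $F_7^- \leftrightarrow \nfd$, and fixing $P_8$) reduces the task to showing that the minimal subsets $\{U_{2,5}\}$, $\{F_7^-\}$, and $\{F_7, F_7^*\}$ are each non-superfluous, since non-superfluity is inherited by supersets. In each case I would exhibit an infinite family of $3$-connected matroids in the relevant residual. For $\{U_{2,5}\}$ the family $\{U_{2,n}\}_{n\geq 5}$ works: each $U_{2,n}$ is $3$-connected with a $U_{2,5}$-minor and, being of rank~$2$, avoids every other member of $\mathcal{S}$. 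For $\{F_7, F_7^*\}$ the projective geometries $\{PG(r, 2)\}_{r \geq 2}$ work: each is $3$-connected and $\GF(2)$-representable (hence has no minor in $\{U_{2,5}, U_{3,5}, F_7^-, \nfd, P_8\}$, all non-binary), and each contains $F_7 = PG(2,2)$ as a restriction.

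The principal obstacle is the case $\{F_7^-\}$: we must exhibit infinitely many $3$-connected ternary matroids containing $F_7^-$ as a minor but avoiding both $\nfd$ and $P_8$. A natural candidate is the sequence of matroids obtained by iterated $3$-sums of $F_7^-$ along common triangles (or a related ``grow along $F_7^-$'' construction built on the macro $\growfan{}{}{\cdot}$), producing ever larger $3$-connected ternary matroids each retaining $F_7^-$ as a minor; the technical heart of the argument is then verifying that no member of the sequence acquires $\nfd$ or $P_8$ as a minor. Dualising this family handles the remaining case $\{\nfd\}$ and completes the proof.
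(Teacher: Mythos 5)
Your first half and your families for $\{U_{2,5}\}$ and $\{F_{7},F_{7}^{*}\}$ match the paper's argument: the $F_{7}$ case is the splitter theorem for $F_{7}$ in $\ex(\{U_{2,5},U_{3,5},F_{7}^{*}\})$ (Theorem~\ref{thm:F7splitter}), and the $P_{8}$ case is the Geelen--Oxley--Vertigan--Whittle result; your detour through Theorem~\ref{thm:GOVW02} is harmless, and the exclusion of $P_{8}^{=}$ is immediate anyway since $M$ has a $P_{8}$-minor while $P_{8}^{=}$, having only $8$ elements, does not.

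The genuine gap is the non-superfluousness of $\{F_{7}^{-}\}$, which you correctly flag as the technical heart but do not carry out --- and the candidate you name first would in fact fail. If $T$ is a triangle of $F_{7}^{-}$, then $\growfan{T}{n}{F_{7}^{-}}$ has $\Delta_{T}(F_{7}^{-})$ as a minor: for $n=3$ it equals $P_{T}(\wheel{3},F_{7}^{-})\del x_{2}$, and deleting $x_{1},x_{3}$ as well yields $P_{T}(\wheel{3},F_{7}^{-})\del T\cong\Delta_{T}(F_{7}^{-})$, while larger $n$ reduce to $n=3$ by the contraction--deletion argument in the proof of Theorem~\ref{thm:nofanfan}. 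But the $\Delta\text{-}Y$ exchange on a triangle of $F_{7}^{-}$ produces $\nfd$ (equivalently, $\nabla_{T}(\nfd)\cong F_{7}^{-}$, the very fact exploited in the proofs of Lemmas~\ref{lem:M9} and~\ref{lem:M11}), so every member of that family has an $\nfd$-minor and lies outside $\ex(\mathcal{S}-\{F_{7}^{-}\})$. Whether iterated $3$-sums of copies of $F_{7}^{-}$ fare better is not obvious and you offer no verification. The paper's resolution is to grow the fan not on $F_{7}^{-}$ itself but on a carefully chosen single-element extension $M_{8}$ of $F_{7}^{-}$, for which both $M_{8}$ and $\Delta_{T}(M_{8})$ can be checked to avoid every member of $\mathcal{O}\cup\mathcal{S}\cup\mathcal{N}$ other than $F_{7}^{-}$; Theorem~\ref{thm:nofanfan} reduces the infinite family to that finite check, and the check itself is Lemma~\ref{lem:M9}. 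Without such a construction and verification (and its dual for $\{\nfd\}$), the first assertion of the theorem is not established.
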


Let $\nreg:=(\qalpha,\{\pm\alpha^{i}(1-\alpha)^{j}\mid i,j\in \Z\})$
be the \emph{near-regular} partial field.
A matroid is $\nreg$-representable if and only if it
is representable over $\GF(3)$, $\GF(4)$, and $\GF(5)$.
The next theorem is proved in \cite{HMZ11}.

\begin{theorem}
\label{thm:nregexcluded}
The set of excluded minors for the class of
$\nreg$-representable matroids is
\[\{U_{2,5},U_{3,5},F_{7},F_{7}^{*},
F_{7}^{-},\nfd,\agde,(\agde)^{*},
\Delta_{3}(\agde),P_{8}\}.\]
\end{theorem}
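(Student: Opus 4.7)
The plan is to leverage the fact, asserted just above the theorem, that $\mathbb{U}_1$-representability coincides with $\GF(3)$-, $\GF(4)$-, and $\GF(5)$-representability simultaneously, together with the excluded-minor characterisations already stated and Whittle's structural classification of $3$-connected ternary matroids \cite{Whi97}. Since the class of $\mathbb{U}_1$-representable matroids is closed under direct sum and $2$-sum, every excluded minor is $3$-connected, so we may restrict attention to the $3$-connected case throughout.

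For the easy direction I would verify that each of the ten listed matroids really is an excluded minor. The four matroids $U_{2,5},U_{3,5},F_{7},F_{7}^{*}$ are excluded for $\GF(3)$ by Theorem \ref{thm:gf3excluded}, hence for $\mathbb{U}_1$. By Theorem \ref{thm:sruexcluded}, $F_{7}^{-}$ and $\nfd$ are $\GF(3)$-representable but fail over $\GF(4)$. For $\agde,(\agde)^{*},\Delta_{3}(\agde),P_{8}$ one exhibits explicit $\GF(3)$- and $\GF(4)$-representations and then rules out any $\GF(5)$-representation by a direct calculation on a partial representation matrix. Minor-minimality is a finite check on the small number of $3$-connected proper minors of each matroid.

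For the hard direction, let $M$ be a $3$-connected excluded minor for $\mathbb{U}_1$-representability. If $M$ is not $\GF(3)$-representable then Theorem \ref{thm:gf3excluded} forces $M\in\{U_{2,5},U_{3,5},F_{7},F_{7}^{*}\}$. Otherwise $M$ is $\GF(3)$-representable; if in addition $M$ is not $\GF(4)$-representable then Theorem \ref{thm:sruexcluded} together with minor-minimality gives $M\in\{F_{7}^{-},\nfd\}$. The essential remaining case is that $M$ is $\psru$-representable but not $\GF(5)$-representable. Here I would invoke Whittle's classification of $3$-connected ternary matroids by the partial fields over which they are representable: the only $3$-connected ternary matroids representable over both $\GF(4)$ and $\GF(5)$ are the near-regular ones. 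Thus $M$ is a minor-minimal $\psru$-representable matroid that is not near-regular, and one enumerates these to obtain exactly $\agde,(\agde)^{*},\Delta_{3}(\agde),P_{8}$.

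The main obstacle is this final enumeration. A priori the class of $\psru$-representable matroids that fail to be $\GF(5)$-representable is infinite, so identifying its minor-minimal members requires Whittle's Stabilizer Theorem to control how partial-field representations extend from minors, combined with a careful case analysis of how a non-$\GF(5)$-representable $\psru$-matroid can be built up from $\agde$ by single-element extensions and coextensions. The appearance of $\Delta_{3}(\agde)$ reflects the fact that the near-regular class is closed under $\Delta$-$Y$ exchange, so the excluded-minor set inherits a corresponding closure; verifying that $\Delta_{3}(\agde)$ is genuinely minor-minimal, and that no further $\Delta$-$Y$ iterates need to be added to the list, is likely the most delicate technical step of the proof.
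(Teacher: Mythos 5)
The paper does not prove Theorem~\ref{thm:nregexcluded} at all: it is quoted from \cite{HMZ11}, a separate thirty-page paper whose entire content is this characterization. Your proposal is therefore being measured against that paper, and as it stands it is an outline with the hard part missing. The reduction you perform (excluded minors that are not ternary come from Theorem~\ref{thm:gf3excluded}; those that are ternary but not $\psru$-representable come from Theorem~\ref{thm:sruexcluded}; the remaining case is $\psru$-representable but not near-regular) is sound, modulo one factual slip: $P_8$ is \emph{not} $\GF(4)$-representable (it is an excluded minor for $\GF(4)$ by Theorem~\ref{thm:gf4excluded}), so it belongs to your second case alongside $F_7^-$ and $\nfd$, not to the final ``fails only over $\GF(5)$'' case, and it certainly does not admit the $\GF(4)$-representation you propose to exhibit. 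The matroids genuinely belonging to the last case are only $\agde$, $(\agde)^{*}$, and $\Delta_3(\agde)$.

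The genuine gap is the final step, which you dispose of with ``one enumerates these.'' The class of $3$-connected $\psru$-representable matroids that are not near-regular is infinite (the present paper's Lemma~\ref{lem:M11} exhibits an infinite family built on $\Delta_3(\agde)$), and nothing in your sketch bounds the size of a minor-minimal member, so there is no finite list to enumerate. Whittle's classification and the Stabilizer Theorem, which you invoke by name, do not by themselves yield such a bound; the actual proof in \cite{HMZ11} requires the full excluded-minor machinery in the style of Geelen--Gerards--Kapoor: locating a suitable deletion pair in a hypothetical excluded minor, using stabilizers to control inequivalent representations of the relevant minors, deriving a contradiction from incompatible partial representations unless the matroid is small, and only then performing a finite case analysis. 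Your closing paragraph correctly identifies this as ``the most delicate technical step,'' but identifying the obstacle is not the same as overcoming it; as written, the proposal does not constitute a proof of the theorem.
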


Let $\mathcal{N}$ be the set featured in Theorem \ref{thm:nregexcluded}.

\begin{theorem}
\label{thm:nregsuper}
The only superfluous subsets of $\mathcal{N}$ are the
subsets of $\{F_{7},\agde,(\agde)^{*}\}$ and
$\{F_{7}^{*},\agde,(\agde)^{*}\}$.
The only $3$-connected matroids in
$\ex(\mathcal{N}-\{F_{7},\agde,(\agde)^{*}\})-\ex(\mathcal{N})$
are isomorphic to $F_{7}$, $\agde$,
$(\agde)^{*}$, $\AG(2,3)$, or $(\AG(2,3))^{*}$.
\end{theorem}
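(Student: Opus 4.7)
My plan is to establish upper and lower bounds separately. The set $\mathcal{N}$ is closed under matroid duality --- the members pair up as $U_{2,5}/U_{3,5}$, $F_7/F_7^{*}$, $F_7^{-}/\nfd$, $\agde/(\agde)^{*}$, with $\Delta_3(\agde)$ and $P_8$ self-dual --- so duality swaps the two candidate superfluous sets $\{F_7,\agde,(\agde)^{*}\}$ and $\{F_7^{*},\agde,(\agde)^{*}\}$. It suffices to establish the upper bound for $\mathcal{E}^{\star} := \{F_7,\agde,(\agde)^{*}\}$ and dualize to obtain the other.

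For the upper bound, let $M$ be a $3$-connected matroid in $\ex(\mathcal{N} - \mathcal{E}^{\star}) - \ex(\mathcal{N})$; I split into cases according to which member of $\mathcal{E}^{\star}$ appears as a minor of $M$. Case 1: $M$ has an $F_7$-minor. Since $M$ avoids $\{U_{2,5},U_{3,5},F_7^{*}\}$, Theorem \ref{thm:gf3super} immediately forces $M \cong F_7$. Case 2: $M$ has an $\agde$- or $(\agde)^{*}$-minor but no $F_7$-minor. Then $M$ avoids $\{U_{2,5},U_{3,5},F_7,F_7^{*}\}$, and is therefore ternary by Theorem \ref{thm:gf3excluded}. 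By duality within the ternary class I may assume $M$ has an $\agde$-minor. I then apply the Splitter Theorem to $M$ with $\agde$ as the seed $3$-connected minor, inside $\ex(\mathcal{N} - \mathcal{E}^{\star})$: enumerate the $3$-connected single-element extensions and coextensions of $\agde$ that remain in the class, show that $\AG(2,3)$ is the only one reached, and then iterate at $\AG(2,3)$ to show that every further extension or coextension forces one of $F_7^{-},\nfd,\Delta_3(\agde),P_8$ as a minor. This yields $M \in \{\agde,\AG(2,3)\}$. Combining the two cases and dualizing delivers the five matroids listed in the theorem.

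For the lower bound, if $\mathcal{E}' \subseteq \mathcal{N}$ lies in neither $\{F_7,\agde,(\agde)^{*}\}$ nor $\{F_7^{*},\agde,(\agde)^{*}\}$, then either $\mathcal{E}'$ contains some $X \in \{U_{2,5},U_{3,5},F_7^{-},\nfd,\Delta_3(\agde),P_8\}$, or else $\mathcal{E}' \supseteq \{F_7,F_7^{*}\}$. In each case I exhibit an explicit infinite family of $3$-connected matroids that contain the appropriate minor but avoid every other member of $\mathcal{N}$: the families $\{U_{2,n}\}_{n \geq 5}$ and $\{U_{n-2,n}\}_{n \geq 5}$ for $U_{2,5}$ and $U_{3,5}$ respectively (their low rank or corank forbids every other member of $\mathcal{N}$); the projective geometries $\{\mathrm{PG}(r-1,2)\}_{r \geq 3}$ for the case $\mathcal{E}' \supseteq \{F_7,F_7^{*}\}$ (these are binary, hence automatically avoid the eight non-binary matroids in $\mathcal{N}\setminus\{F_7,F_7^{*}\}$); and for each of $F_7^{-},\nfd,\Delta_3(\agde),P_8$, an infinite family built by iterated $\Delta$--$Y$ exchange or single-element extension starting from the indicator matroid within a suitable partial-field class, with $3$-connectivity and the absence of unwanted minors verified at each step.

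The principal obstacle will be the Splitter-Theorem enumeration in Case 2: producing a finite catalogue of the $3$-connected extensions and coextensions of $\agde$ and of $\AG(2,3)$ in the ternary class, and verifying that any matroid strictly beyond $\AG(2,3)$ picks up one of $F_7^{-},\nfd,\Delta_3(\agde),P_8$ as a minor. A secondary but essential check is that $\AG(2,3)$ and $(\AG(2,3))^{*}$ themselves avoid all four of these matroids, which is a direct representation-theoretic computation using the affine-geometry structure.
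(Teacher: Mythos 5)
Your upper-bound plan follows the paper's route exactly: the $F_7$ case is Theorem \ref{thm:gf3super} (equivalently, Theorem \ref{thm:F7splitter}), and your Case 2 splitter chain $\agde \rightarrow \AG(2,3) \rightarrow$ contradiction is precisely the content of Theorem \ref{thm:agde}, whose real work is Lemma \ref{lem:AG23-del-e} (every $3$-connected $\psru$-representable single-element coextension of $\agde$ has a $\Delta_{3}(\agde)$-minor, proved by a careful $\GF(3)$-matrix case analysis) together with the identification of $\AG(2,3)$ as the unique $3$-connected extension. As a plan this half is sound; the one thing to make explicit is that membership in $\ex(\mathcal{N}-\mathcal{E}^{\star})$ gives you $\psru$-representability (Theorem \ref{thm:sruexcluded}), not merely ternarity, and that is what makes the finite enumeration feasible.

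The lower bound has two genuine gaps. First, your $P_8$ step cannot be carried out: by Corollary \ref{cor:P8splitter}, every $3$-connected matroid with a $P_8$-minor and no minor in $\{U_{2,5},U_{3,5},F_7^{-},\nfd\}$ (a subset of $\mathcal{N}-\{P_8\}$) is a minor of $S(5,6,12)$, so $\ex(\mathcal{N}-\{P_8\})-\ex(\mathcal{N})$ contains only finitely many $3$-connected matroids and no infinite family exists. (This actually shows $\{P_8\}$ \emph{is} superfluous; the paper's own proof never addresses $P_8$, and the statement as printed appears to need $P_8$ adjoined to both maximal sets, with minors of $S(5,6,12)$ added to the list in the second sentence.) Second, for $F_7^{-}$, $\nfd$, and $\Delta_{3}(\agde)$ your ``iterated $\Delta\text{-}Y$ exchange \ldots verified at each step'' is a placeholder for the hardest part of the argument. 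The paper's construction grows fans by generalized parallel connection with a wheel (Theorem \ref{thm:growfan}), applied not to $F_7^{-}$ or $\Delta_{3}(\agde)$ themselves but to carefully chosen single-element extensions $M_8$ and $M_9$; the essential device is Theorem \ref{thm:nofanfan}, which converts ``$\growfan{T}{n}{M}$ has no unwanted minor for every $n$'' into a finite check on $M$ and $\Delta_{T}(M)$ (carried out in Lemmas \ref{lem:M9} and \ref{lem:M11}). Without such a reduction you cannot verify a minor-exclusion condition for infinitely many matroids ``at each step,'' so this part of your plan is missing its key idea.
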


We note here that all undefined matroids appearing in the paper can be found in the appendix of Oxley \cite{oxley11}.
We assume that the reader is familiar with the terminology and notation from that source. We use the terms \emph{line} and \emph{plane} to
refer to rank-$2$ and rank-$3$ subsets of the ground set. 
By performing a $\Delta\text{-}Y$ exchange on $\agde$, we
obtain $\Delta_{3}(\agde)$, which is represented over
$\GF(3)$ by $[I_{4}\ A]$, where $A$ is the following matrix.
\begin{equation}
\label{agdematrix}
  A = \kbordermatrix{ & 5 & 6 & 7 & 8\\
        1 & 1 & 0 &-1 & 0\\
        2 & 1 & 0 & 1 & 1\\
        3 & 1 & 1 & 0 & 1\\
        4 & 0 & 1 & 1 & -1
       }.
\end{equation}

The paper is built up as follows. In Section \ref{sec:splitters} we use
Seymour's Splitter Theorem to prove that certain subsets are superfluous. 
To prove that a subset $\{M\}$ is not superfluous, we need to
generate an infinite number of $3$-connected matroids in
$\ex(\mathcal{E}-\{M\})-\ex(\mathcal{E})$.
We do so by the simple expedient of growing arbitrarily
long fans.
Section \ref{sec:growfans} proves the technical lemmas that allow us to do so. In Section \ref{sec:infinites} we introduce several matroids to which our method of growing fans will be applied, and in Section \ref{sec:final} we will round up the results. Note that the proofs in Sections \ref{sec:splitters} and \ref{sec:infinites} are finite case-checks that could be replaced by computer checks. However, at the moment of writing no sufficiently reliable software for this existed.

\section{Applying the splitter theorem}\label{sec:splitters}

The following result is very well-known \cite[Proposition~12.2.3]{oxley11}.

\begin{prop}
\label{prop:f7splitter2}
The matroid $F_{7}$ is a splitter for the class
$\ex(\{U_{2,4},F_{7}^{*}\})$.
\end{prop}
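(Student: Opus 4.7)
The plan is to invoke Seymour's Splitter Theorem applied to $F_7$ as a 3-connected minor of matroids in the minor-closed class $\ex(\{U_{2,4}, F_7^*\})$. Since $F_7$ has at least four elements and is isomorphic to neither a wheel nor a whirl, it suffices to show that no 3-connected matroid $M \in \ex(\{U_{2,4}, F_7^*\})$ on eight elements has $F_7$ as either a single-element deletion $M \del e$ or a single-element contraction $M/e$. I would split accordingly into the extension and coextension cases.

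In the extension case $M \del e \cong F_7$, 3-connectedness forbids $e$ from being a coloop, so $r(M) = 3$ and $M$ is a rank-$3$ simple extension of $F_7 = \mathrm{PG}(2,2)$ by a single point $e$. Either $e$ lies on one of the seven existing $3$-point lines of $F_7$, creating a $4$-point line and hence a $U_{2,4}$-restriction, or $e$ is in general position with respect to $F_7$, in which case $M/e$ is the rank-$2$ simple matroid $U_{2,7}$, which also contains $U_{2,4}$. Either way $M \notin \ex(U_{2,4})$, a contradiction.

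In the coextension case $M/e \cong F_7$, we have $r(M) = 4$, $|E(M)| = 8$, and $M$ is binary. I would choose a $4 \times 8$ binary representation of $M$ in which $e$ is the fourth standard basis column and the first three rows give a fixed representation of $F_7 = M/e$; the only remaining freedom is in the seven entries of the fourth row on the non-$e$ columns. Row operations on the fourth row normalise three of these entries to zero along a chosen basis triple of $F_7$, leaving $2^4 = 16$ configurations; under the induced action of $\mathrm{Aut}(F_7) \cong \mathrm{PGL}(3,2)$ these reduce to a short list, and a direct check shows that each $3$-connected candidate contains $F_7^*$ as a minor, contradicting $M \in \ex(F_7^*)$. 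The main obstacle is this final case analysis, in which one must confirm both $3$-connectedness and the presence of an $F_7^*$-minor in each normalised coextension; the extension case is essentially immediate from the fact that $F_7$ already exhausts all rank-$3$ points of the binary projective plane.
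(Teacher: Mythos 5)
Your proposal is correct, but note that the paper does not actually prove this proposition: it is cited as well known from Oxley (Proposition~12.2.3), and your argument is essentially the standard textbook proof. The use of the Splitter Theorem is legitimate ($F_7$ is $3$-connected, has at least four elements, and is neither a wheel nor a whirl), and the extension case is handled correctly: either $e$ extends a line of $F_7$, giving a $U_{2,4}$-restriction, or $e$ lies on no triangle, in which case $M\con e$ is a simple rank-$2$ matroid on seven points. (Equivalently, since excluding $U_{2,4}$ forces $M$ to be binary by Tutte's theorem, $F_7=\mathrm{PG}(2,2)$ admits no simple binary rank-$3$ extension.) The only part you assert rather than execute is the coextension case analysis, but your normalisation is set up correctly and the check does go through: the $2^4=16$ coset representatives of the fourth row fall into four $\mathrm{Aut}(F_7)$-orbits according to minimum coset weight $0,1,2,3$; weight $0$ makes $e$ a coloop, weight $1$ puts $e$ in a series pair, and the weight-$2$ and weight-$3$ orbits yield $S_8$ and $AG(3,2)$ respectively, each of which has a single-element deletion isomorphic to $F_7^*$. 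It is worth observing that your direct enumeration of binary coextensions would not suffice for the paper's generalisation (Theorem~\ref{thm:F7splitter}), where only $U_{2,5}$ and $U_{3,5}$ are excluded and the coextension need not be binary; there the paper instead invokes the excluded-minor characterisation of the class of matroids that are binary or ternary.
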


Our next result, which seems not to be in the literature, proves a generalization of Proposition \ref{prop:f7splitter2}.

\begin{theorem}\label{thm:F7splitter}
  The matroid $F_7$ is a splitter for the class
  $\ex(\{U_{2,5}, U_{3,5}, F_7^*\})$.
\end{theorem}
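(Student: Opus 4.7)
The plan is to apply Seymour's Splitter Theorem. Since $F_7$ is $3$-connected, has more than four elements, and is neither a wheel nor a whirl, it suffices to show that no $3$-connected single-element extension or coextension of $F_7$ lies in $\ex(\{U_{2,5}, U_{3,5}, F_7^*\})$. I would handle extensions and coextensions separately, using the transitivity of $\mathrm{Aut}(F_7) \cong \mathrm{PGL}(3,2)$ on flags to enumerate representatives up to isomorphism.

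For extensions, the transitivity of $\mathrm{Aut}(F_7)$ on lines implies that there are, up to isomorphism, exactly two $3$-connected simple single-element extensions of $F_7$: the free extension $M_1$, in which the added point $p$ lies on no line of $F_7$, and the extension $M_2$, in which $p$ is placed on an existing line $\ell = \{a,b,c\}$, producing a $4$-point line. Contracting $p$ from $M_1$ yields $U_{2,7}$, since no two former elements become parallel, and $U_{2,7}$ has $U_{2,5}$ as a minor. In $M_2/p$ the three points of $\ell$ form a single parallel class while the remaining four points of $F_7$ are singletons, so $\mathrm{si}(M_2/p) \cong U_{2,5}$. In either case $M$ has a $U_{2,5}$-minor and is not in the class.

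For coextensions, I would dualize: a $3$-connected single-element coextension $M$ of $F_7$ has dual $M^*$ which is a $3$-connected single-element extension of $F_7^*$, and $M$ lies in $\ex(\{U_{2,5}, U_{3,5}, F_7^*\})$ exactly when $M^*$ lies in $\ex(\{U_{3,5}, U_{2,5}, F_7\})$. Since $F_7^*$ has girth $4$, its rank-$2$ flats are all $\binom{7}{2}=21$ pairs, and its hyperplanes split into two $\mathrm{Aut}(F_7^*)$-orbits: the $7$ lines of $F_7$ (of size $3$) and their complements (of size $4$). Enumerating the modular cuts up to this action, the relevant $3$-connected simple single-element extensions of $F_7^*$ are the free extension, the principal extensions at a pair, at a $3$-element hyperplane, or at a $4$-element hyperplane, and the non-principal extensions generated by two or three concurrent $3$-element hyperplanes (any other non-principal cut reduces, by closure under modular-pair intersections, to a principal
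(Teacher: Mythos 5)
Your treatment of extensions is correct and is essentially the paper's: a simple single-element extension of $F_7$ places the new point on at most one line (two lines would force a parallel pair), and in both the free case and the one-line case contracting the new point yields a $U_{2,5}$-minor. The reduction of coextensions to extensions of $F_7^*$ by duality, with the forbidden set becoming $\{U_{2,5},U_{3,5},F_7\}$, is also sound, and your observations about the girth and hyperplanes of $F_7^*$ are accurate.

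The problem is that the coextension case --- the only substantive part of the theorem --- is not actually proved. Your argument breaks off in the middle of enumerating the modular cuts of $F_7^*$, and even for the candidates you do list (the free extension, the principal extensions at a pair, at a $3$-element hyperplane, at a $4$-element hyperplane, and the non-principal cuts generated by concurrent $3$-element hyperplanes) you never carry out the verification that each resulting extension either fails to be $3$-connected or contains a minor in $\{U_{2,5},U_{3,5},F_7\}$. That verification is the entire content of the theorem here; without it nothing has been established. The enumeration route is viable in principle, but it is a genuinely nontrivial finite check (you must justify that your list of modular cuts is exhaustive up to the automorphism action, and then exhibit a forbidden minor in each case), and none of it is present. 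For comparison, the paper avoids this case analysis altogether: it notes that a $3$-connected coextension $M$ of $F_7$ in the class would be neither binary nor ternary, invokes the excluded-minor characterization of the class of matroids that are binary or ternary together with the Semple--Whittle theorem on $3$-connected matroids with no $U_{2,5}$- or $U_{3,5}$-minor, and concludes that $M$ would have to be the relaxation of $\AG(3,2)$, which has an $F_7^*$-minor. If you want a self-contained elementary proof your plan can work, but you must finish the enumeration and exhibit the forbidden minors case by case.
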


\begin{proof}
  By Seymour's Splitter Theorem we only have to check that $F_7$ has no $3$-connected single-element extensions and coextensions in $\ex(\{U_{2,5}, U_{3,5}, F_7^*\})$. If $M$ is a $3$-connected matroid such that $M\del e \cong F_7$, then either $e$ is on exactly one line of $F_7$, or $e$ is on no line of $F_7$. In either case $M\con e$ contains a $U_{2,5}$-minor.

Therefore we will assume that $M$ is a $3$-connected matroid such that $M\con e \cong F_7$ and $M$ belongs to $\ex(\{U_{2,5}, U_{3,5}, F_7^*\})$.
Let $\mathcal{M}$ be the class of matroids that are either
binary or ternary.
Now $\mathcal{M}$ is a minor-closed class, and its excluded
minors are characterized in~\cite{MOOW09}.
Certainly $M$ is not binary, since that would lead to a contradiction
to Proposition \ref{prop:f7splitter2}.
Moreover, $M$ is not ternary, as it contains an $F_{7}$-minor.
Therefore $M$ is not contained in $\mathcal{M}$.
Hence~\cite[Theorem~4.1]{SW96b} implies that $M$ contains a
$3$-connected excluded minor for $\mathcal{M}$.
There are only $4$ such excluded minors, and as
$M$ does not contain $U_{2,5}$ or $U_{3,5}$ as a minor,
$M$ must contain one of the matroids obtained from the
affine geometry $\AG(3,2)$ or
from $T_{12}$ by relaxing a circuit-hyperplane.
As $M$ contains only $8$ elements, $M$ must be isomorphic
to the unique
relaxation of $\AG(3,2)$.
But this matroid has an
$F_{7}^{*}$-minor (\cite[Page 646]{oxley11}).
This contradiction completes the proof.
\end{proof}

We can make short work of the case in which we do not exclude $P_8$.
Geelen et al. \cite[Theorem~1.5]{GOVW00} proved the following result:

\begin{theorem}
  If $M$ is a $3$-connected matroid in
  $\ex(\{U_{2,6}, U_{4,6}, P_6, F_7^-, \nfd\})$, and $M$ has a $P_8$-minor, then $M$ is a minor of $S(5,6,12)$. 
\end{theorem}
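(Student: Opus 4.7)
The plan is to apply Seymour's Splitter Theorem with $P_8$ as the starting matroid and $\mathcal{C} := \ex(\{U_{2,6}, U_{4,6}, P_6, F_7^-, \nfd\})$ as the minor-closed ambient class. The theorem reduces the problem to an inductive statement on $|E(M)|$: every $3$-connected matroid $M\in\mathcal{C}$ with a $P_8$-minor is a minor of $S(5,6,12)$, and it suffices to verify that whenever $N\in\mathcal{C}$ is a $3$-connected minor of $S(5,6,12)$ with a $P_8$-minor, every $3$-connected single-element extension and coextension of $N$ that remains in $\mathcal{C}$ is still a minor of $S(5,6,12)$.

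First I would enumerate, up to isomorphism, the $3$-connected single-element extensions of $P_8$. Using the transitive automorphism group of $P_8$ this collapses into a small number of orbits of modular cuts. For each extension $M^+$ I would test membership in $\mathcal{C}$ by scanning its minors for occurrences of $U_{2,6}$, $U_{4,6}$, $P_6$, $F_7^-$, and $\nfd$; for each $M^+\in\mathcal{C}$ I would exhibit an explicit embedding as a minor of $S(5,6,12)$, using a concrete representation of both matroids. Since $P_8$ is self-dual, coextensions reduce to extensions under duality.

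I would then iterate: for each newly produced $3$-connected matroid in $\mathcal{C}$, again classify its $3$-connected single-element extensions and coextensions and repeat the membership and embedding tests. The process terminates because $S(5,6,12)$ has only finitely many $3$-connected minors containing a $P_8$-minor, so after finitely many rounds every path either escapes $\mathcal{C}$ through a forbidden minor or reaches a matroid admitting no proper $3$-connected extension in $\mathcal{C}$.

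The main obstacle is the scale of the case analysis, which grows quickly with the number of elements. The key simplification is to carry along, at every stage, an explicit embedding of the current matroid into $S(5,6,12)$; candidate extensions then come either from an existing point of $S(5,6,12)$ outside the current ground set (automatically yielding a minor of $S(5,6,12)$) or from a genuinely new point, and only the latter need be tested against the forbidden minors. This is tedious but finite, and structurally parallel to the fan-growing arguments developed later in the paper, except that here the Splitter Theorem forces termination instead of producing an infinite family.
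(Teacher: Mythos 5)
First, a point of comparison: the paper does not prove this theorem at all. It is quoted verbatim from Geelen, Oxley, Vertigan, and Whittle \cite[Theorem~1.5]{GOVW00}, so there is no internal proof to measure your argument against. Your strategy---take $P_8$ as the splitter-theorem root, grow through $3$-connected single-element extensions and coextensions inside $\ex(\{U_{2,6}, U_{4,6}, P_6, F_7^-, \nfd\})$, and show every surviving matroid embeds in $S(5,6,12)$---is indeed the natural one, and it is essentially the skeleton of the argument in \cite{GOVW00}. But what you have written is a plan, not a proof: the entire mathematical content of this theorem \emph{is} the case analysis you defer. Enumerating the $3$-connected single-element extensions of $P_8$ means enumerating its nontrivial modular cuts up to automorphism; transitivity of $\mathrm{Aut}(P_8)$ on elements does not by itself collapse that computation, and none of the resulting extensions, minor tests, or embeddings into $S(5,6,12)$ is actually exhibited. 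The original proof occupies most of a published paper precisely because these verifications are long and require additional structural input (in particular, control over $\GF(3)$- and $\GF(4)$-representability of the intermediate matroids), so the claim cannot be accepted on the strength of the outline alone.

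There is also a genuine logical slip in your termination argument. You justify termination by saying that $S(5,6,12)$ has only finitely many $3$-connected minors with a $P_8$-minor---but that only bounds the search if you already know that every matroid arising in the process is a minor of $S(5,6,12)$, which is the conclusion you are trying to prove. The correct termination argument is different: one shows by direct computation that the iteration dies out at a fixed size, i.e.\ that no $3$-connected matroid in the class with a $P_8$-minor and $13$ elements exists, whence by the Splitter Theorem none exists with more than $12$ elements. Relatedly, your shortcut of only testing ``genuinely new'' points against the forbidden minors is backwards: every abstract $3$-connected extension of the current matroid that avoids the forbidden minors must be shown to embed in $S(5,6,12)$; you cannot restrict attention in advance to extensions realized by existing points of $S(5,6,12)$. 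These are fixable, but as it stands the proposal establishes nothing beyond a (reasonable) roadmap.
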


Since each of $U_{2,6}$, $U_{4,6}$, $P_6$ has a minor in
$\{U_{2,5}, U_{3,5}\}$, we immediately have

\begin{corollary}
\label{cor:P8splitter}
  If $M$ is a $3$-connected matroid in
  $\ex(\{U_{2,5}, U_{3,5}, F_7^-, \nfd\})$, and $M$ has a $P_8$-minor, then $M$ is a minor of $S(5,6,12)$.
\end{corollary}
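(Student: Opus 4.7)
The plan is a direct reduction to the preceding theorem of Geelen, Oxley, Vertigan, and Whittle. If I can establish the containment
\[\ex(\{U_{2,5}, U_{3,5}, F_7^-, \nfd\}) \subseteq \ex(\{U_{2,6}, U_{4,6}, P_6, F_7^-, \nfd\}),\]
then every $M$ satisfying the hypotheses of the corollary also lies in the hypothesis class of the theorem, and the same conclusion carries over verbatim.

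To establish this containment I would verify that each of $U_{2,6}$, $U_{4,6}$, and $P_6$ has a minor in $\{U_{2,5},U_{3,5}\}$. The first two are immediate: deleting any element of $U_{2,6}$ gives $U_{2,5}$, and contracting any element of $U_{4,6}$ gives $U_{3,5}$. For $P_6$, which is the rank-$3$ matroid on six elements with a unique three-point line $L$, I would contract an element $e \notin L$. Since $L$ is the only triangle of $P_6$ and $e$ lies off $L$, no three-element set containing $e$ is dependent, so no pair of elements becomes parallel in $P_6\con e$. Hence $P_6\con e$ is a simple rank-$2$ matroid on five elements, namely $U_{2,5}$.

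With the containment in hand, the corollary follows at once: any $M$ in $\ex(\{U_{2,5},U_{3,5},F_7^-,\nfd\})$ has no $U_{2,6}$-, $U_{4,6}$-, or $P_6$-minor either, so $M$ lies in $\ex(\{U_{2,6}, U_{4,6}, P_6, F_7^-,\nfd\})$, and the cited theorem yields that $M$ is a minor of $S(5,6,12)$. There is no real obstacle here — the content of the corollary is essentially packaged into the choice of the four excluded minors, and the argument reduces to a short inspection of three small matroids.
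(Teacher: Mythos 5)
Your proposal is correct and is exactly the paper's argument: the paper derives the corollary from the Geelen--Oxley--Vertigan--Whittle theorem by noting that each of $U_{2,6}$, $U_{4,6}$, $P_6$ has a minor in $\{U_{2,5},U_{3,5}\}$. You merely spell out the three small verifications that the paper leaves implicit.
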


Next, we determine what happens if we don't exclude $\agde$. 
Our starting point is the automorphism group of $\agde$.
Note that it is transitive on elements of the ground set (\cite[Page 653]{oxley11}).
For each element $p$ in $\agde$, there is a unique element $p'$ such that $p$ and $p'$ are not on a $3$-point line of $\agde$. Any automorphism will map $\{p,p'\}$ to another such pair, so specifying the image of $p$ also specifies the image of $p'$.
Consider automorphisms of the diagram in Figure~\ref{fig:AGdefig}
that pointwise fix~$1$ and~$8$. 
It is easy to confirm that the permutations below
(presented in cyclic notation),
\begin{equation}
\label{aut1}
(1)(2,4)(3,7)(5,6)(8)
\end{equation}
and
\begin{equation}
\label{aut2}
(1)(2,3,5)(4,6,7)(8)
\end{equation}
are two such automorphisms.
The next result follows easily from this discussion.

\begin{figure}[tbp]
  \centering
  \includegraphics{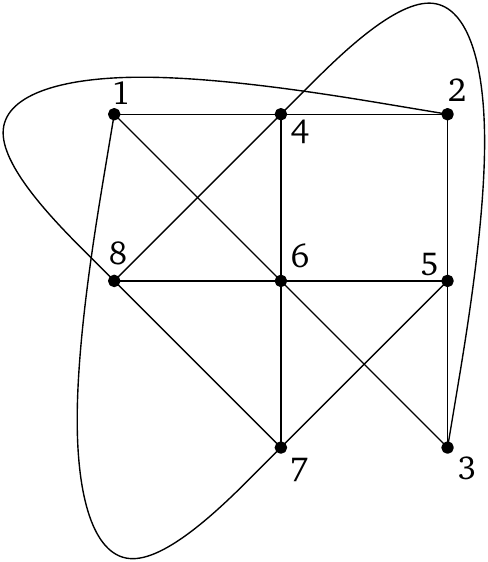}
  \caption{The matroid $\agde$.}
  \label{fig:AGdefig}
\end{figure}

\begin{lemma}\label{lem:transitivefixed}
  Let $p$ and $p'$ be points in $\agde$ such that there is no $3$-point line containing $p$ and $p'$. The subgroup of the automorphism group of $\agde$ that pointwise fixes $p$ and $p'$ is transitive on $E(\agde)\smin\{p,p'\}$.
\end{lemma}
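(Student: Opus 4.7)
The plan is to reduce the general statement to the specific case $\{p,p'\}=\{1,8\}$ by exploiting the transitivity of the full automorphism group on $E(\agde)$, and then to verify transitivity for that specific case directly from the two explicit automorphisms (\ref{aut1}) and (\ref{aut2}).

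For the reduction, I would use the two observations made in the paragraph preceding the lemma: the automorphism group is transitive on $E(\agde)$, and every automorphism sends a non-collinear pair to a non-collinear pair (since the partner $p'$ is determined uniquely by $p$ as the element not lying on any $3$-point line through $p$). Inspection of Figure \ref{fig:AGdefig} shows that $1$ and $8$ form such a pair. Given arbitrary $\{p,p'\}$, pick an automorphism $\sigma$ sending $p \mapsto 1$; then $\sigma(p')$ must be the unique partner of $1$, namely $8$. Conjugation by $\sigma$ carries the pointwise stabilizer of $\{p,p'\}$ onto the pointwise stabilizer of $\{1,8\}$, and preserves the property of being transitive on the $6$-element complement. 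So it is enough to prove the lemma for $p=1$, $p'=8$.

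For that specific case, I would consider the subgroup $H$ of $\operatorname{Aut}(\agde)$ generated by the two automorphisms (\ref{aut1}) and (\ref{aut2}) (both of which pointwise fix $1$ and $8$, as the cyclic notation shows). I would then compute the orbit of the element $2$ under $H$. From (\ref{aut2}) one obtains $2 \mapsto 3 \mapsto 5$, giving $\{2,3,5\}$ in the orbit. Applying (\ref{aut1}) sends $2 \mapsto 4$, adding $4$ to the orbit, and then (\ref{aut2}) sends $4 \mapsto 6 \mapsto 7$, picking up $6$ and $7$. Hence the orbit of $2$ under $H$ contains $\{2,3,4,5,6,7\} = E(\agde)\smin\{1,8\}$, which is exactly the desired transitivity.

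The only real verification needed is that (\ref{aut1}) and (\ref{aut2}) genuinely are automorphisms of $\agde$; this is not so much an obstacle as a routine check against the circuits in Figure \ref{fig:AGdefig} (and was asserted by the authors in the text preceding the lemma). Once that is in hand, the orbit computation is immediate, and the conjugation argument handles the general pair $\{p,p'\}$ with no further work.
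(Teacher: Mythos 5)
Your proposal is correct and matches the paper's intent exactly: the authors state that the lemma ``follows easily'' from the preceding discussion, which consists precisely of the transitivity of the full automorphism group, the preservation of the non-collinear pair $\{p,p'\}$, and the two explicit automorphisms \eqref{aut1} and \eqref{aut2} fixing $1$ and $8$. Your orbit computation showing that these two permutations generate a group transitive on $\{2,\ldots,7\}$, together with the conjugation reduction to the pair $\{1,8\}$, is exactly the argument the authors leave implicit.
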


\begin{lemma}\label{lem:transitivewhirlbasis}
  Let $B$ and $B'$ be bases of $\agde$ such that every pair $p,q \in B$, and every pair $k,l \in B'$ spans a $3$-point line. There is an automorphism of $\agde$ mapping $B$ to $B'$.
\end{lemma}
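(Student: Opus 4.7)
My plan is to prove the lemma by showing that $\text{Aut}(\agde)$ acts transitively on the set of ``special bases'' (three-element sets whose pairs all lie on $3$-point lines), via a two-step reduction.

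First I would observe that every special basis $B = \{a, b, c\}$ canonically determines an \emph{associated complementary pair}: since every pair in $B$ lies on a $3$-point line, no element of $B$ is the complement of another (recall that complementary means ``not on a $3$-point line''), so the complements $a^*, b^*, c^*$ are three distinct elements of $E(\agde)\smin B$. Hence $B \cup \{a^*, b^*, c^*\}$ has $6$ elements, leaving two remaining elements that form a complementary pair $\{d, d^*\}$. Since $\text{Aut}(\agde)$ is transitive on elements (and therefore on complementary pairs, as complementation is automorphism-invariant), I can apply a suitable automorphism to $B'$ to arrange that $B$ and $B'$ share the same associated pair; after conjugating, I may assume this pair is $\{1, 8\}$ of the paper's labeling.

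Next, both $B$ and $B'$ now lie in $E(\agde)\smin\{1,8\}$, a six-element set partitioned into three complementary pairs. A short count shows that, of the eight transversals of this partition, two are the two $3$-point lines of $\agde$ avoiding $\{1,8\}$, so the remaining six are exactly the special bases with associated pair $\{1, 8\}$. By Lemma~\ref{lem:transitivefixed}, the pointwise stabilizer $G$ of $\{1,8\}$ acts transitively on these six ground-set elements.

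The final step is to show that $G$ also acts transitively on those six special bases. Since $G$ contains the two explicit automorphisms \eqref{aut1} and \eqref{aut2}, this reduces to a direct calculation: pick any one of the six special bases, apply the various products of these two automorphisms, and verify that the six resulting images exhaust all six special bases with associated pair $\{1, 8\}$. This last step is the main obstacle; although conceptually routine, it relies on having in hand the explicit list of $3$-point lines of $\agde$ (recoverable from Figure~\ref{fig:AGdefig} or from the representation matrix \eqref{agdematrix}) in order to enumerate the six special transversals. Once transitivity of $G$ on them is established, the existence of an automorphism mapping $B$ to $B'$ follows at once.
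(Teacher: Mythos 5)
Your proof is correct, but it takes a genuinely different route from the paper's. Your structural reductions are all sound: a special basis $B$ contains no complementary pair, so $B$ together with the three complements of its elements occupies six points and leaves a fourth complementary pair as an invariant of $B$; transitivity on elements yields transitivity on complementary pairs; and of the eight $3$-point lines of $\agde$, six meet $\{1,8\}$, so exactly two of the eight transversals of the three pairs partitioning $E(\agde)\smin\{1,8\}$ are lines and the remaining six are precisely the special bases with associated pair $\{1,8\}$. The orbit computation you defer does in fact succeed: modelling $\agde$ as $\AG(2,3)$ with the origin of the plane deleted, the pointwise stabilizer of a complementary pair $\{v,-v\}$ is the order-$6$ group of invertible matrices fixing $v$, and a direct check shows it permutes the six special transversals regularly; this stabilizer is exactly the group generated by \eqref{aut1} and \eqref{aut2}. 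The paper avoids this enumeration with a synthetic observation: for a special basis $\{p,q,r\}$, the three third points $e_{pq},e_{pr},e_{qr}$ of the spanned lines and the three complements $p',q',r'$ are both $3$-subsets of the $5$-element complement of the basis, hence they meet, and the only possibility (after relabelling) is $e_{qr}=p'$, so $\{p',q,r\}$ is a line. Sending $k\mapsto p$ (hence $k'\mapsto p'$) and then $l\mapsto q$ via Lemma~\ref{lem:transitivefixed}, the image of the line $\{k',l,m\}$ must be the unique line through $p'$ and $q$, which forces $m\mapsto r$. Your approach buys a transparent orbit-counting picture (and in fact shows the action on special bases is regular), at the cost of an explicit finite verification that the paper's argument makes unnecessary; as written, your proof is not complete until that verification is actually carried out.
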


\begin{proof}
If $x$ is any element of $\agde$, then let $x'$ be the point
that is in no $3$-point line with $x$.
Let $B = \{p,q,r\}$.
The hypotheses of the lemma imply that
$|\{p,q,r,p',q',r'\}|=6$.
Let $e_{pq}$ be the unique point such that
$\{p,q,e_{pq}\}$ is a circuit.
Define $e_{pr}$ and $e_{qr}$ in the same way.
Then $|\{p,q,r,e_{pq},e_{pr},e_{qr}\}|=6$.
As $\agde$ contains only $8$ points, we
can relabel as necessary, and assume
$e_{qr}$ is in $\{p',q',r'\}$.
Since $e_{qr}$ is in a non-trivial line with $q$ and $r$,
it follows that $e_{qr}=p'$, so that
$\{p',q,r\}$ is a circuit.
Let $B' = \{k,l,m\}$.
By relabeling and using the same arguments, we can assume that
$\{k',l,m\}$ is a $3$-point line of $\agde$.

  Consider the automorphism that maps $k$ to $p$.
  It must map $k'$ to $p'$. By composing this
  automorphism with an automorphism that fixes $p$ and $p'$,
  and referring to Lemma \ref{lem:transitivefixed}, we can
   assume that $l$ is mapped to $q$. But an automorphism maps lines to lines, so then $m$ must be mapped to $r$, and the result follows.
\end{proof}

In the proof of the next lemma we will show several times that a matroid $M = M[I\ A]$ is isomorphic to one of $\Delta_{3}(\agde)$, $P_8$, $F_7^-$, or
$\nfd$. Unless the isomorphism is obvious (i.e.\ one merely needs to permute rows and columns), we will specify which isomorphism we use. For this we use the representation of $\Delta_{3}(\agde)$ with elements labeled as in \eqref{agdematrix}. Moreover, we will label the elements of $P_8$, $F_7^-$, $\nfd$ so that $P_8 = [I_4\ A_8]$, $F_7^- = [I_3\ A_7]$, and $\nfd = [-A_7^T\ I_4]$, where $A_{7}$ and $A_{8}$ are the following
matrices over $\GF(3)$.
\[
    A_8 = \kbordermatrix{ & 5 & 6 & 7 & 8\\
                        1 & 0 & 1 & 1 & -1\\
                        2 & 1 & 0 & 1 & 1\\
                        3 & 1 & 1 & 0 & 1\\
                        4 & -1 & 1 & 1 & 0} \qquad 
    A_7 = \kbordermatrix{ & 4 & 5 & 6 & 7\\
                        1 & 1 & 1 & 0 & 1\\
                        2 & 1 & 0 & 1 & 1\\
                        3 & 0 & 1 & 1 & 1 
                        }
\]

\begin{lemma}\label{lem:AG23-del-e}
  Let $M$ be a $3$-connected $\psru$-representable matroid such that $M\con f \cong \AG(2,3)\del e$ for some $f \in E(M)$. Then $M$ has $\Delta_{3}(\AG(2,3)\del e)$ as minor.
\end{lemma}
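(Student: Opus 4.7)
The plan is to classify, up to isomorphism, the $\psru$-representable $3$-connected coextensions $M$ of $\agde$ by a single element $f$, and to exhibit a $\Delta_{3}(\agde)$-minor of $M$ in each case. I begin by fixing a $\psru$-representation $[I_{3}\mid Y]$ of $\agde$, with the columns of $Y$ labelled by the five non-basis elements. The coextension $M$ can then be written as $[I_{4}\mid C]$, where the bottom three rows of $C$ reproduce $Y$ and the top row is a vector $v\in\psru^{5}$ (the new ``$f$-row''). Since $M$ is $3$-connected it is simple, so $v$ is nonzero; a quick check rules out $|\mathrm{supp}(v)|=1$ (else $\{f,i\}$ would be a $2$-cocircuit for the unique index $i$ with $v_{i}\neq 0$), so $v$ has at least two nonzero coordinates. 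Since $|E(M)|-|E(\Delta_{3}(\agde))|=1$ and $\Delta_{3}(\agde)$ has rank $4$, the target minor must be obtained by a single deletion $M\del g$ of a non-coloop element.

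Next I use the symmetry of $\agde$ to cut down the enumeration. Lemma \ref{lem:transitivefixed} shows that the stabiliser of any non-collinear pair in $\mathrm{Aut}(\agde)$ is transitive on the remaining six points, and Lemma \ref{lem:transitivewhirlbasis} provides an analogous transitivity on a distinguished class of bases. Automorphisms of $\agde$ act on the column labels of $Y$ (together with compatible row and column rescalings), and via these I normalise both the support of $v$ and its nonzero entries. Combined with the restriction that those entries lie in the $\psru$-unit group, this leaves a short, explicit list of candidates for $M$ up to isomorphism.

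For each $M$ on the list, I exhibit a single element $g\in E(M)$ such that $M\del g$ has a $\GF(3)$-representation which, after row and column scalings and possibly a sequence of pivots, becomes $[I_{4}\mid A]$ with $A$ as in \eqref{agdematrix}; this identifies $M\del g$ with $\Delta_{3}(\agde)$ and completes the proof for that case. In practice $g$ will often be one of the original elements of $\agde$ that, together with a suitable triangle of $\agde$ covered by $v$, participates in the $4$-circuit or triad that a $\Delta$-$Y$ exchange would create.

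The main technical obstacle is managing the case analysis efficiently. Without the symmetries of $\agde$ the enumeration of admissible $v$ would be unwieldy, but Lemmas \ref{lem:transitivefixed} and \ref{lem:transitivewhirlbasis}, together with the $\psru$-representability constraint on the entries of $v$, collapse the analysis to a finite, short list of cases, each verifiable by a direct $\GF(3)$-matrix computation against \eqref{agdematrix}.
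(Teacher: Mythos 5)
Your overall strategy---write $M=[I_4\mid C]$ with the bottom three rows representing $M\con f\cong\agde$, treat the $f$-row $v$ as the unknown, normalise using the automorphism group, and then locate the $\Delta_3(\agde)$-minor by a single deletion in each surviving case---is the same strategy the paper follows. The gap is in the step you describe as ``via these I normalise both the support of $v$ and its nonzero entries,'' which you treat as routine but which is in fact the crux of the argument. The support of $v$ is not something on which $\mathrm{Aut}(\agde)$ acts by permuting column labels: $\{f\}\cup\mathrm{supp}(v)$ is the fundamental cocircuit of $f$ with respect to the displayed basis, so $\mathrm{supp}(v)=E(\agde)-\closure_M(B)$ depends on which elements of $\agde$ the matroid $M$ itself places in the span of the basis $B$, and an automorphism of $\agde$ generally moves basis elements to non-basis elements, so realising it on the representation requires pivots that scramble $v$. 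To normalise one needs an intrinsic invariant of the pair $(M,f)$; the paper uses the complement $Y$ of a $5$-point plane of $M\del f$, sorts the possible $Y$ into three orbits using Lemmas \ref{lem:transitivefixed} and \ref{lem:transitivewhirlbasis} together with transitivity on $3$-point lines, and only then writes down the three normalised matrices.

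Moreover, before any of that the paper must prove that $M\del f$ has a $5$-point plane at all. That claim consumes roughly half of the proof: if no such plane exists, one shows $M\del f$ has no $3$-point lines, that every single-element contraction of $M\del f$ is $P_7$, and then a matrix analysis yields $P_8$- or $\nfd$-minors (or a structural contradiction inside $P_7$), contradicting $\psru$-representability. In your parameterisation this difficulty does not disappear; it resurfaces as the many choices of $v$ for which $M$ fails to be $\psru$-representable, and eliminating those requires either exhibiting an excluded minor or checking that some subdeterminant of $[I_4\mid C]$ leaves the unit group---work the proposal does not carry out. Since the content of this lemma \emph{is} the finite case-check, asserting that a ``short, explicit list'' exists and that a suitable $g$ can be exhibited in each case, without producing either, leaves the proof essentially unwritten. (Two smaller points: $v\neq 0$ because $f$ would otherwise be a coloop, not because $M$ is simple; and working over $\GF(3)$ via unique representability of ternary matroids, as the paper does, shrinks the entry alphabet from seven values to three.)
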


\begin{proof}
  Suppose that $M$ is a counterexample. Let $M' := M\del f$.
  \begin{claim}
    There exists a set $X\subseteq E(M)\smin f$ such that $|X| = 5$
    and $\rank(X)=3$.
      \end{claim}
  \begin{proof}
    Suppose $M'$ has no 5-point planes. First we show that $M'$ has no $3$-point lines. Observe that each line of $M'$ is a line of $\agde$, so $M'$ has no $4$-point lines. Suppose $\{x,y,z\}$ is a
    line of $M'$. If $x$ is on another 3-point line, then the union of those lines would be a 5-point plane, a contradiction. It follows that $M'\con x\del y$ is simple.
    Furthermore, $z$ is in no $3$-point line in $M'\con x\del y$,
    or else the union of this line with $\{x,y\}$ is a $5$-point plane
    in $M'$.
    Therefore $M'\con x\del y \con z$ is simple, has
    rank $2$, and contains $5$ points.
    Therefore $M'$ has a $U_{2,5}$-minor, which is impossible since it
    is $\psru$-representable.
        Hence $M'$ has no $3$-point lines.

Let $e$ be an arbitrary point in $E(M')$.
Then $M'\con e$ is a simple rank-$3$ matroid with $7$ points. Since $M'$ has no 5-point planes, $M'\con e$ has no 4-point lines. Hence $M'\con e$ cannot be the union of two lines, so it is $3$-connected.
Then $M'\con e$ is isomorphic to one of the matroids $F_{7}$,
$F_{7}^{-}$, $P_{7}$, or $O_{7}$
(see \cite[Page 292]{GGK}).
Since $M'\con e$ is $\psru$-representable, it is
not isomorphic to $F_{7}$ or $F_{7}^{-}$.
Furthermore, $O_{7}$ contains a $4$-point line, so $M'\con e$ must be isomorphic to $P_{7}$.
By the uniqueness of representation over $\GF(3)$, we can assume that
the following $\GF(3)$-matrix $A'$ is such that $M' = [I_4\ A']$.
\[
      A' := \kbordermatrix{
         & 4 & 5 & 6 & 7\\
      1  & 1      & 1     & 0      & -1\\
      2  & 1      & 0     & 1      & 1\\
      3  & 0      & 1     & 1      & 1\\
      e  & \alpha & \beta & \gamma & \delta}.
\]

As $M'$ has no $3$-point lines, all of $\alpha$, $\beta$, and
$\gamma$ are non-zero.
By scaling the row labeled $e$, we assume that $\alpha=1$. If $\gamma = \delta$ then $\{1,6,7\}$ is a triangle. It follows that $\gamma \neq \delta$.

If $\beta=1$, then $\gamma\ne 1$, or else
$M'\del 7\cong \nfd$.
Therefore $\gamma=-1$.
If $\delta=0$, then $A'$ represents $P_{8}$, which
is impossible as $M$ is $\GF(4)$-representable.
Therefore $\delta=1$. By the discussion above, $M'\con 1 \cong P_7$. But in $M'\con 1$, the sets $\{2,4,e\}$, $\{3,5,e\}$, and $\{6,7,e\}$ are triangles containing $e$, whereas $\{3,5,e\},\{4,5,6\}$, and $\{2,5,7\}$ are triangles containing $5$. This is a contradiction, since $P_7$ has only one element that is on three lines.
Therefore $\beta=-1$.
It follows that $\delta\ne 0$, or else $\{4,5,7\}$ is a triangle
of $M'$.

Assume that $\gamma=-1$, from which it follows that $\delta = 1$. Then we find that $M' \cong P_8$, with isomorphism
\[
1\to 1\quad
2\to 2\quad
3\to 5\quad
4\to 7\quad
5\to 8\quad
6\to 3\quad
7\to 6\quad
e\to 4.
\]
Therefore we must have $\gamma=1$, and hence $\delta = -1$. But then again $M' \cong P_8$, with isomorphism
\[
1\to 1\quad
2\to 5\quad
3\to 3\quad
4\to 8\quad
5\to 6\quad
6\to 2\quad
7\to 7\quad
e\to 4.
\]
From this final contradiction we conclude that the claim holds.
  \end{proof}

 Let $X$ be a set of 5 points of a plane of $M'$, and $Y := E(M') \smin X$. Note that $f\notin \closure_{M}(X)$, as $M\con f$ contains no rank-2 flat with 5 elements.
 
   Since $M\con f$ is isomorphic to $\agde$, we can distinguish three cases. Either $Y$ is a 3-point line of $M\con f$, or $Y$ is a basis of $M\con f$, and every pair of elements of $Y$ spans a $3$-point line in $M\con f$, or $Y$ is a basis of $M\con f$, and there is exactly one pair of elements in $Y$ that does not span a $3$-point line of $M\con f$. We can use Lemmas \ref{lem:transitivefixed} and \ref{lem:transitivewhirlbasis},
   and the fact that the automorphism group of $\agde$ is transitive on
   $3$-point lines (\cite[Page 653]{oxley11}), and thereby assume that either $Y = \{4,6,7\}$ or $Y = \{4,6,8\}$ or $Y = \{4,5,6\}$, where the elements of $\AG(2,3)\del e$ are labeled as in Figure \ref{fig:AGdefig}. 
     \paragraph{Case I} Suppose $Y = \{4,6,7\}$, so that $X=\{1,2,3,5,8\}$.
     Since $f$ is not a coloop and not in a series pair, there are two elements in $Y$ that are not spanned by $X$ in $M'$.
Let $\sigma$ be the automorphism in Equation \eqref{aut2}, so that
$Y$ is an orbit of $\sigma$.
There is some $i\in\{0,1,2\}$ such that $\sigma^{i}$ takes
the two elements in $Y-\closure_{M'}(X)$ to $\{4,6\}$.
Now $\sigma^{i}$ induces a relabeling of the elements of $M'$ that set-wise fixes $X$. After applying this relabeling, $M\con f$ is still equal to $\agde$, as labeled in Figure \ref{fig:AGdefig}. Moreover, $X$ is a $5$-point plane of $M'$ that does not span $4$ or $6$.
     By the uniqueness of representations over $\GF(3)$ we can assume that $M = M[I\ A]$ for some $\GF(3)$-matrix of the form
  \[
    A :=  \kbordermatrix{ & 4 & 5 & 6 & 7 & 8\\
                       f  & 1 & 0 & \alpha & \beta & 0\\
                       1  & 1 & 0 & 1 & 1 & 1\\
                       2  & 1 & 1 & 0 & -1 & 1\\
                       3  & 0 & 1 & 1 & -1 & -1}
  \]
  with $\alpha \neq 0$. If $\alpha = 1$ then $M\del \{5,7\} \cong (F_7^-)^*$, with isomorphism
  \[
  1 \to 5\quad
  2\to 7\quad
  3\to 6\quad
  4\to 4\quad
  6\to 2\quad
  8\to 3\quad
  f\to 1.
  \]
  Hence $\alpha = -1$. But now
  $M\del 7 \cong \Delta_{3}(\AG(2,3)\del e)$.
  This completes the analysis in Case I.
  
  From now on, we assume that $Y$ is not a triangle of $M/f$. We will also assume that if $X$ spans an element $y\in Y$, then there is no triangle $T$ of $M/f$ that contains $Y-y$. To justify this assumption, note that if $y\in \closure_{M'}(X)$, then $(Y-y)\cup f$ must be a triad of $M$, so that $\rank_{M}(X\cup y)=3$. Furthermore, $Y$ is not a triangle in $M/f$, so $T$ contains exactly one element of $X$. Therefore, if $T$ exists, we can replace $X$ with $(X-T)\cup y$, and replace $Y$ with $T$, and reduce to Case I.
  
  \paragraph{Case II} Suppose $Y = \{4,6,8\}$.
  Since any pair of elements from $\{4,6,8\}$ is in a triangle of $M/f$, we can assume that $X$ spans no element of $Y$, by the argument in the previous paragraph.
Hence we have $M = M[I\ A]$ for some $\GF(3)$-matrix of the form
  \[
    A :=  \kbordermatrix{ & 4 & 5 & 6 & 7 & 8\\
                       f  & 1 & 0 & \alpha & 0 & \beta\\
                       1  & 1 & 0 & 1 & 1 & 1\\
                       2  & 1 & 1 & 0 & -1 & 1\\
                       3  & 0 & 1 & 1 & -1 & -1},
  \]
  where $\alpha$ and $\beta$ are non-zero.

If $(\alpha,\beta)=(1,1)$, then $M\del 5 \cong \Delta_{3}(\agde)$, with isomorphism
\[
1\to 1\quad
2\to 2\quad
3\to 4\quad
4\to 3\quad
6\to 8\quad
7\to 7\quad
8\to 6\quad
f\to 5.
\]
If $(\alpha,\beta)=(1,-1)$, then $M\del 5 \cong P_8$, with isomorphism
\[
1\to 2\quad
2\to 3\quad
3\to 4\quad
4\to 6\quad
6\to 1\quad
7\to 5\quad
8\to 8\quad
f\to 7,
\]
contradicting $\GF(4)$-representability of $M$.

If $(\alpha,\beta)=(-1,1)$, then $M\con 1\del 5 \cong F_7^-$, with isomorphism
\[
  2\to 2\quad
  3\to 3\quad
  4\to 1\quad
  6\to 7\quad
  7\to 6\quad
  8\to 5\quad
  f\to 4.
\]

If $(\alpha,\beta)=(-1,-1)$, then $M\del 5 \cong \Delta_{3}(\agde)$, with isomorphism
\[
  1\to 2\quad
  2\to 7\quad
  3\to 5\quad
  4\to 4\quad
  6\to 3\quad
  7\to 6\quad
  8\to 8\quad
  f\to 1.  
\]
Thus $M$ has a $\Delta_{3}(\agde)$-minor.

    \paragraph{Case III} Suppose $Y = \{4,5,6\}$.
    Since $\{4,6,7\}$ and $\{5,6,8\}$ are triangles of $M/f$, we
    assume that neither $4$ nor $5$ is in the span of $X$, by the argument immediately preceding Case II.
    Hence $M = M[I\ A]$ for some $\GF(3)$-matrix of the form
  \[
    A :=  \kbordermatrix{ & 4 & 5 & 6 & 7 & 8\\
                       f  & 1 & \alpha & \beta & 0 & 0\\
                       1  & 1 & 0 & 1 & 1 & 1\\
                       2  & 1 & 1 & 0 & -1 & 1\\
                       3  & 0 & 1 & 1 & -1 & -1},
  \]
  where $\alpha\ne 0$.
If $\alpha = 1$ then $M\del \{6,8\} \cong (F_7^-)^*$, with isomorphism
  \[
    1\to 5\quad
    2\to 6\quad
    3\to 7\quad
    4\to 1\quad
    5\to 4\quad
    7\to 3\quad
    f\to 2.
  \]
  Therefore $\alpha=-1$. But now $M\del 6 \cong \Delta_{3}(\agde)$, with isomorphism
  \[
    1\to 8\quad
    2\to 3\quad
    3\to 2\quad
    4\to 7\quad
    5\to 1\quad
    7\to 4\quad
    8\to 6\quad
    f\to 5.
  \]
The result follows.
\end{proof}

We must now study coextensions of $\AG(2,3)$. Luckily our previous analysis can be used for this.

\begin{lemma}\label{lem:delelemAG23}
      Let $M$ be a $3$-connected $\psru$-representable matroid such that $M\con f \cong \AG(2,3)$ for some $f \in E(M)$. Then $M$ has an element $g \neq f$ such that $M\del g$ is $3$-connected.
\end{lemma}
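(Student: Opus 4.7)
The plan is to show that for $g \in E(M) - f$, the matroid $M\del g$ is $3$-connected if and only if $g$ lies in no triad of $M$, and then to produce such a $g$ by a rank count.

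For the characterization, since $(M\del g)/f = (M/f)\del g \cong \AG(2,3)\del g$ is $3$-connected, a standard connectivity computation projecting through $f$ shows that any $2$-separation $(A,B)$ of $M\del g$ with $\min(|A|,|B|)\ge 2$ must have one side of size exactly $2$ containing $f$, say $A = \{f,x\}$. Using that $M$ has no parallel pairs, no coloops and no $2$-cocircuits, the $2$-separation condition reduces to $r_M(E(M) - \{f,g,x\}) = 3$, which forces $\{f,g,x\}$ to be a triad of $M$; the converse is immediate, since any such triad gives a $2$-cocircuit $\{f,x\}$ of $M\del g$.

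Next, every triad of $M$ contains $f$: the cocircuits of $M$ disjoint from $f$ are precisely the cocircuits of $M/f \cong \AG(2,3)$, and all of these are complements of $3$-point lines, hence have size $6$. The task therefore reduces to finding $g \in E(M) - f$ that lies in no $2$-cocircuit of $M \del f$. Since $M$ is $3$-connected on at least $4$ elements, $M\del f$ is coloopless, so cocircuit elimination makes ``lying in a common $2$-cocircuit'' an equivalence relation, partitioning the involved elements of $E(M) - f$ into series classes of size at least two.

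The main obstacle, and essentially the only one, is ruling out the extremal scenario in which every element of $E(M) - f$ lies in some $2$-cocircuit of $M \del f$. In that case the nine elements split into at most $\lfloor 9/2 \rfloor = 4$ series classes. However, these series classes biject with the parallel classes of $(M\del f)^* = M^*/f$, and $\operatorname{si}(M^*/f)$ has rank $r(M^*/f) = |E(M)| - r(M) - 1 = 5$, so at least $5$ classes are required. This contradiction produces an element $g \ne f$ lying in no triad of $M$, and for that $g$ we conclude $M\del g$ is $3$-connected. The $\psru$-representability hypothesis is inherited from the context, but is not actually needed for this argument.
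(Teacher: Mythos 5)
Your proof is correct and follows essentially the same route as the paper's: both reduce, via the $3$-connectivity of $(M\con f)\del g$, to the assertion that every $g\ne f$ would have to lie in a triad of $M$ through $f$, and then derive a counting contradiction on the nine elements of $E(M)\smin f$. The only difference is the final count: the paper notes that distinct triads through $f$ meet only in $f$ (since $\AG(2,3)$ has no triads) and invokes the parity of $9$, whereas you bound the number of series classes of $M\del f$ below by $r(M^{*}\con f)=5$; both are sound.
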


\begin{proof}
    Let $M$ be as stated, and suppose the result is false, so for each element $g\neq f$, $M\del g$ is not 3-connected. Since $M\del g \con f$ is 3-connected, $g$ must be in a triad with $f$. Two distinct triads $T_1$ and $T_2$, both containing $f$, intersect only in $f$, or else $M/f\cong \AG(2,3)$ contains a triad. From this we find that $M\del f$ can be partitioned into series pairs. However, this matroid has an odd number of elements, a contradiction.
\end{proof}

\begin{corollary}
\label{cor:agde}
  Let $M$ be a $3$-connected $\psru$-representable matroid such that $M\con f \cong \AG(2,3)$ for some $f \in E(M)$. Then $M$ has $\Delta_{3}(\agde)$ as minor.
\end{corollary}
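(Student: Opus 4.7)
The plan is to combine Lemma \ref{lem:delelemAG23} with Lemma \ref{lem:AG23-del-e} in essentially one move. Given the hypothesis that $M$ is $3$-connected, $\psru$-representable, and $M\con f \cong \AG(2,3)$, I first apply Lemma \ref{lem:delelemAG23} to produce an element $g \in E(M) \smin \{f\}$ such that $M \del g$ is $3$-connected.

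Next, I observe that $(M\del g)\con f = (M\con f)\del g \cong \AG(2,3)\del g$. Because the automorphism group of $\AG(2,3)$ is transitive on its ground set, $\AG(2,3)\del g \cong \AG(2,3)\del e$. Thus $M\del g$ is a $3$-connected, $\psru$-representable matroid (representability is closed under minors) satisfying the hypothesis of Lemma \ref{lem:AG23-del-e} with the same distinguished element $f$.

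Lemma \ref{lem:AG23-del-e} then yields that $M\del g$ has $\Delta_{3}(\agde)$ as a minor. Since any minor of $M\del g$ is also a minor of $M$, this gives the desired conclusion. There is no real obstacle here: the two preceding lemmas were designed precisely so that this corollary follows by a single application of each, together with the transitivity of $\mathrm{Aut}(\AG(2,3))$ on points.
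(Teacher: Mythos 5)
Your proof is correct and follows exactly the paper's own argument: apply Lemma \ref{lem:delelemAG23} to find $g$, note that $M\del g$ satisfies the hypotheses of Lemma \ref{lem:AG23-del-e}, and conclude. The extra details you supply (commuting deletion with contraction, transitivity of the automorphism group, minor-closure of representability) are all valid and simply make explicit what the paper leaves implicit.
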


\begin{proof}
    Let $g$ be an element as in Lemma \ref{lem:delelemAG23}. Then $M\del g$ is a matroid satisfying all conditions of Lemma \ref{lem:AG23-del-e}, and the result follows.
\end{proof}

Now we combine the previous results and the Splitter Theorem
to prove the following theorem.

 \begin{theorem}\label{thm:agde}
  Let $M$ be a $3$-connected matroid in \[\ex(\{U_{2,5}, U_{3,5}, F_7, F_7^*, F_7^-, \nfd, \Delta_{3}(\agde), P_8\}).\] Then either $M$ is near-regular, or one of $M$ and $M^{*}$ is isomorphic to a member of
  $\{\agde, \AG(2,3)\}$.
\end{theorem}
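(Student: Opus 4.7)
My plan is to apply Seymour's Splitter Theorem twice, using Lemma~\ref{lem:AG23-del-e} and Corollary~\ref{cor:agde} to discard coextensions, and a direct coordinate check in $PG(2,3)$ to discard non-$\AG(2,3)$ extensions.

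First, the exclusion set contains $\{U_{2,5},U_{3,5},F_{7},F_{7}^{*},F_{7}^{-},\nfd,P_{8}\}$, which is the full set of excluded minors for $\psru$-representability (Theorem~\ref{thm:sruexcluded}), so $M$ is $\psru$-representable. Comparing with Theorem~\ref{thm:nregexcluded}, the only near-regular excluded minors absent from our exclusion list are $\agde$ and $(\agde)^{*}$. Thus $M$ is near-regular unless it has one of these as a minor; since our exclusion list is self-dual, we may replace $M$ by $M^{*}$ if necessary and assume $M$ has an $\agde$-minor.

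If $M\cong\agde$ we are done. Otherwise, since $\agde$ is $3$-connected and is neither a wheel nor a whirl, the Splitter Theorem produces a $3$-connected minor $N$ of $M$ with $|E(N)|=9$ and either $N\con e\cong\agde$ or $N\del e\cong\agde$. In the coextension case, $N$ is $\psru$-representable (as a minor of $M$), so Lemma~\ref{lem:AG23-del-e} yields a $\Delta_{3}(\agde)$-minor of $N$, hence of $M$, contradicting our hypothesis. So $N\del e\cong\agde$, and $N$ is a $3$-connected rank-$3$ $\GF(3)$-representable single-element extension of $\agde$. It therefore embeds in $PG(2,3)$, and is obtained from $\agde$ by adjoining one of the five points of $PG(2,3)\smin\agde$: either the ``missing'' affine point, giving $\AG(2,3)$, or one of four line-at-infinity points, all giving the same matroid $N'$ with two $4$-point lines meeting in the new point. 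A short coordinate calculation shows $N'$ contains a seven-element restriction isomorphic to $F_{7}^{-}$, contradicting the exclusion list. Hence $N\cong\AG(2,3)$, and $M$ has an $\AG(2,3)$-minor.

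If $M\cong\AG(2,3)$ we are done. Otherwise, a second application of the Splitter Theorem yields a $3$-connected minor $N''$ of $M$ with $|E(N'')|=10$ that is a single-element extension or coextension of $\AG(2,3)$. The coextension case is ruled out by Corollary~\ref{cor:agde}, and the extension case gives $\AG(2,3)$ adjoined with a single line-at-infinity point of $PG(2,3)$; this matroid contains the previous $N'$ as a restriction (delete the extra affine point $e$), and hence an $F_{7}^{-}$-minor -- contradiction. Therefore $M\cong\AG(2,3)$, completing the proof. The main obstacle is the identification of the $F_{7}^{-}$-restriction in $N'$: a finite but delicate coordinate check in $\GF(3)^{3}$ that selects one point from each of the two $4$-point lines whose deletion yields exactly the $F_{7}^{-}$ line structure.
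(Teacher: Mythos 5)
Your proposal is correct and follows the same skeleton as the paper's proof: reduce to the case where $M$ is $\psru$-representable with an $\agde$-minor, apply the Splitter Theorem twice, and kill the coextension cases with Lemma~\ref{lem:AG23-del-e} and Corollary~\ref{cor:agde}. The one substantive difference is how the extension cases are handled. The paper invokes \cite[Theorem 2.1]{OVW98} on maximum-sized rank-$3$ $\psru$-representable matroids: this immediately identifies the unique simple rank-$3$ nine-element extension as $\AG(2,3)$ and rules out any simple rank-$3$ ten-element matroid, so no geometry needs to be done by hand. You instead embed the extension in $PG(2,3)$ (which is legitimate, via the unique $\GF(3)$-representability of the ternary matroid $\agde$) and check that adjoining any of the four points on the line at infinity yields a matroid $N'$ with an $F_7^-$-restriction. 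That check does work: deleting from $N'$ one point of each of the two $4$-point lines, chosen so that the two deleted points are not collinear with the missing affine point $e$, leaves seven points carrying exactly six $3$-point lines and no longer lines, which among ternary rank-$3$ seven-element matroids forces $F_7^-$ (a collinear choice would instead give $P_7$, so the selection you flag as delicate genuinely matters). Your route is more self-contained but costs a careful finite verification; the paper's citation makes both extension steps one-liners.
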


\begin{proof}
By the excluded-minor characterization of $\psru$-representable
matroids (Theorem \ref{thm:sruexcluded}), it follows that
$M$ is $\psru$-representable.
We assume that $M$ is not $\nreg$-representable.
Then Theorem \ref{thm:nregexcluded} implies that
$M$ contains a minor isomorphic to $\agde$ or its dual.
By duality, we assume that $M$ has an $\agde$-minor.
If $M\cong \agde$, we are done, so we assume otherwise.
By Seymour's Splitter Theorem, $M$ contains a 
$3$-connected minor $M'$, such that $M'$ is a single-element
extension or coextension of $\agde$.
Lemma \ref{lem:AG23-del-e} implies that $M'$ is a
single-element extension of $\agde$.
Thus $M'$ is simple and $r(M')=3$.
Moreover $|E(M')|=9$, so \cite[Theorem 2.1]{OVW98}
implies that $M'\cong \AG(2,3)$.
If $M=M'$, we are done, so we assume that $M$ has a
$3$-connected minor $M''$, such that $M''$ is a
single-element extension or coextension of $\AG(2,3)$.
But $r(M'')>3$, or else we have contradicted
\cite[Theorem 2.1]{OVW98}.
Therefore $M''\con f\cong \AG(2,3)$, for some element $f$.
Corollary \ref{cor:agde} implies that $M''$ has a
$\Delta_{3}(\agde)$-minor, a contradiction.
\end{proof}

\section{Creating bigger fans}\label{sec:growfans}
In this section we prove two results that allow us to replace a fan by a bigger fan while keeping a certain minor $N$, without losing 3-connectivity, and without introducing an undesired minor $N'$ (subject to the conditions that $N'$ is 3-connected and has no 4-element fans). We will use Brylawski's generalized parallel connection \cite{Bry75} for this. We refer the reader to Oxley \cite[Section 11.4]{oxley11} for definitions and elementary properties, including the following:

\begin{lemma}\label{lem:genparconflats}
    Let $M$ and $N$ be matroids having a common restriction $T$, which is moreover a modular flat of $N$. Let $M' := P_T(N,M)$.
    \begin{enumerate}
        \item A subset $F\subseteq E(M')$ is a flat of $M'$ if and only if $F\cap E(N)$ is a flat of $N$ and $F\cap E(M)$ is a flat of $M$;
        \item $M'|E(N) = N$ and $M'|E(M) = M$;
        \item If $e\in E(N)\smin T$ then $M'\del e = P_T(N\del e, M)$;
        \item If $e\in E(N)\smin \closure_{N}(T)$ then $M'\con e = P_{T}(N\con e, M)$;
        \item If $e\in E(M)\smin T$ then $M'\del e = P_T(N, M\del e)$;
        \item If $e\in E(M)\smin \closure_M(T)$ then $M'\con e = P_T(N, M\con e)$.
    \end{enumerate}
\end{lemma}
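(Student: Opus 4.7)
My plan is to take part (\textit{i}) as the foundational characterization of $P_{T}(N,M)$---essentially Brylawski's original definition---and derive parts (\textit{ii})--(\textit{vi}) by translating between the flat description in (\textit{i}) and the standard flat descriptions of deletion and contraction. To justify (\textit{i}), I would verify that the collection $\mathcal{F}^{*}=\{F\subseteq E(N)\cup E(M):F\cap E(N)\in\mathcal{F}(N),\ F\cap E(M)\in\mathcal{F}(M)\}$ satisfies the flat axioms on the ground set $E(N)\cup E(M)$. Closure under intersection is immediate from the corresponding property in $N$ and $M$. The covering property---that for every $F\in\mathcal{F}^{*}$ and every $x\notin F$, the smallest member of $\mathcal{F}^{*}$ containing $F\cup\{x\}$ covers $F$ in $\mathcal{F}^{*}$---is where the modular-flat hypothesis on $T$ is essential, and I would appeal to Brylawski's theorem directly for this.

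Given (\textit{i}), part (\textit{ii}) is a short check: any flat $G$ of $N$ extends to the flat $G\cup E(M)$ of $M'$ whose intersection with $E(N)$ is $G$; conversely, the intersection of any flat of $M'$ with $E(N)$ is a flat of $N$ by (\textit{i}). Hence $M'|E(N)=N$, and $M'|E(M)=M$ by the symmetric argument.

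For the deletion cases (\textit{iii}) and (\textit{v}), I would use the standard description that the flats of $N\del e$ are precisely the sets $G\subseteq E(N)-e$ such that $G$ or $G\cup\{e\}$ is a flat of $N$. A set $F'\subseteq E(M')-e$ is a flat of $M'\del e$ iff $F'$ or $F'\cup\{e\}$ is a flat of $M'$, and by (\textit{i}) this unpacks directly to the defining condition for the flats of $P_{T}(N\del e,M)$. Part (\textit{v}) is the symmetric argument on the $M$-side, with $T$ unchanged as a modular flat of $N$. For the contraction cases (\textit{iv}) and (\textit{vi}), I would likewise use that the flats of $N\con e$ are the sets $G\subseteq E(N)-e$ such that $G\cup\{e\}$ is a flat of $N$. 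A set $F'$ is a flat of $M'\con e$ iff $F'\cup\{e\}$ is a flat of $M'$; by (\textit{i}) the $E(N)$-intersection condition becomes that $F'\cap E(N)$ is a flat of $N\con e$, while the $E(M)$-intersection condition is unaffected since $e\notin E(M)$. The hypothesis $e\notin \closure_{N}(T)$ supplies the preservation of $T$ as a modular flat of $N\con e$ needed for $P_{T}(N\con e,M)$ to be defined.

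The main obstacle is part (\textit{i}): once the modular-flat hypothesis has been used to establish that $\mathcal{F}^{*}$ really is the collection of flats of a matroid, the remaining statements become bookkeeping with standard flat characterizations. A minor technical point in (\textit{iv}) and (\textit{vi}) is to confirm that the generalized parallel connection remains defined after the minor operation---i.e., that $T$ retains its modular-flat status---which follows from well-known preservation results for modular flats under deletion of elements outside the flat and contraction of elements outside its closure.
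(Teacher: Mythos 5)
The paper does not actually prove this lemma---it is quoted from Oxley [Section 11.4] and ultimately from Brylawski---so there is no in-paper argument to compare against; I can only assess your proposal on its own merits. Your overall strategy, taking the flat characterization (\textit{i}) as the definition of $P_T(N,M)$ and unpacking the standard flat descriptions of restriction, deletion and contraction, is the right one, and your treatment of (\textit{iii}) and (\textit{v}) is sound (modularity of $T$ really is preserved under deletion of elements outside $T$). However, your argument for (\textit{ii}) contains a genuine error: for a flat $G$ of $N$, the set $G\cup E(M)$ satisfies $(G\cup E(M))\cap E(N)=G\cup T$, not $G$, and it need not be a flat of $M'$ at all (that would require $G\cup T$ to be a flat of $N$). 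The correct witness is $F=G\cup\closure_M(G\cap T)$: since $T$ is a flat of $N$, the set $G\cap T$ is a flat of $N|T=M|T$, whence $\closure_M(G\cap T)\cap T=G\cap T$, so $F\cap E(N)=G$ while $F\cap E(M)=\closure_M(G\cap T)$ is a flat of $M$. This is precisely where the hypotheses ``$M|T=N|T$'' and ``$T$ is a flat of $N$'' enter; your version uses neither, which is a sign the step has gone wrong.

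The second gap is in (\textit{iv}). You assert that $T$ remains a modular flat of $N\con e$ whenever $e\notin\closure_N(T)$, but this is false: take $N=U_{2,3}$ on $\{a,b,c\}$ with $T=\{a\}$ (a modular flat) and $e=b$; in $N\con b$ the elements $a$ and $c$ are parallel, so $T$ is not even closed. The same phenomenon occurs in this paper's own application, where $T$ is a triangle of a wheel and a rim element outside $T$ is contracted. What Brylawski's preservation theorem actually gives is that $\closure_{N\con e}(T)$ is a modular flat of $N\con e$, not $T$ itself. Consequently, for $P_T(N\con e,M)$ to be defined, and for your appeal to (\textit{i}) to apply to it, you must invoke the more general form of the generalized parallel connection, in which $T$ is only required to have modular closure and every element of $\closure(T)\smin T$ is a loop or parallel to an element of $T$, and verify those conditions for $N\con e$; as written, the reduction of (\textit{iv}) to (\textit{i}) is not justified. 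Part (\textit{vi}) is unaffected, since there $N$ is untouched and one only needs $(M\con e)|T=M|T$, which does follow from $e\notin\closure_M(T)$.
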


Let $M$ be a matroid on the ground set $E$.
A subset of $E$ is \emph{fully-closed} if
it is closed in $M$ and $M^{*}$.
If $X\subseteq E$, then $\fullclosure(X)$ is the
intersection of all fully-closed sets that contain $X$.
We can obtain $\fullclosure(X)$ by applying
the closure operator to $X$, applying the coclosure operator
to the result, and so on, until we cease to gain any new elements.

\begin{lemma}\label{lem:simplecosimple2sep}
    Let $M$ be a simple, cosimple, connected matroid, and let $(A,B)$ be a $2$-separation of $M$. Then $(\fullclosure_M(A), B\smin \fullclosure_M(A))$ is a $2$-separation.
\end{lemma}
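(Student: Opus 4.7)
My plan is to build $\fullclosure_M(A)$ from $A$ one element at a time via the iterative closure/coclosure description given in the paper, and to verify that at each stage the current set and its complement still form a $2$-separation. The core inductive step is: if $(X,Y)$ is a $2$-separation of $M$ and $e \in Y \cap (\closure_M(X) \cup \closure_{M^{*}}(X))$, then $(X \cup e,\, Y \smin e)$ is again a $2$-separation. Iterating this starting from $(A,B)$ and continuing until no further closure or coclosure elements remain outside the current set delivers the pair $(\fullclosure_M(A),\, B \smin \fullclosure_M(A))$ as a $2$-separation.

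To prove the inductive step I first establish the auxiliary fact that every $2$-separation $(X,Y)$ of $M$ satisfies $|X|, |Y| \geq 3$. If, say, $|Y| = 2$, then simplicity of $M$ forces $\rank(Y) = 2$, so $\lambda(X) = 1$ gives $\rank(X) = \rank(M) - 1$; dualizing yields $\corank(Y) = |Y| - \rank(M) + \rank(X) = 1$, making $Y$ a series pair of $M$ and contradicting cosimplicity. With this in hand, $|Y \smin e| \geq 2$ and $|X \cup e| \geq 2$, so it only remains to check $\lambda(X \cup e) = 1$. The upper bound is standard: if $e \in \closure_M(X)$ then $\rank(X \cup e) = \rank(X)$ and $\rank(Y \smin e) \leq \rank(Y)$, so $\lambda(X \cup e) \leq \lambda(X) = 1$; the case $e \in \closure_{M^{*}}(X)$ follows by the identical argument applied to $M^{*}$, using that the connectivity functions of $M$ and $M^{*}$ coincide. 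For the lower bound, connectivity of $M$ combined with $X \cup e$ being a nonempty proper subset of $E(M)$ yields $\lambda(X \cup e) \geq 1$.

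The main subtlety is guaranteeing the complement does not shrink to size at most one during the iteration, but this is exactly what the auxiliary fact about simple, cosimple matroids precludes: each intermediate pair is a $2$-separation with complement of size at least $3$, so removing one element still leaves at least two. The induction therefore runs without obstruction, and the terminal pair, namely $(\fullclosure_M(A),\, B \smin \fullclosure_M(A))$, is the desired $2$-separation.
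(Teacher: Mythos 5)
Your proof is correct and follows essentially the same route as the paper's: both absorb elements of $B$ into $A$ one at a time using the monotonicity of $\lambda$ under adding closure/coclosure elements, and both rule out a too-small complement by observing that a $2$-element set with $\lambda\leq 1$ must be a parallel or series pair, contradicting simplicity and cosimplicity. The only difference is presentational — you run a direct induction with the "both sides have size at least $3$" fact as an auxiliary lemma, whereas the paper argues by contradiction on the last two elements of $B$.
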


\begin{proof}
It is simple to verify that $\lambda_M(\fullclosure_M(A)) \leq \lambda_M(A)$.
If $(\fullclosure_M(A), B\smin \fullclosure_M(A))$ is not a
$2$-separation, then $|B\smin \fullclosure_M(A)|<2$.
This means that we can order the elements of $B$
as $(b_{1},\ldots, b_{k})$, so that 
$b_{i}$ is in $\closure_{M}(A\cup\{b_{1},\ldots, b_{i-1}\})$ or
$\coclosure_{M}(A\cup\{b_{1},\ldots, b_{i-1}\})$, for
all $i\in \{1,\ldots, k-1\}$.
Hence $\lambda_M(A\cup\{b_{1},\ldots, b_{k-2}\})\leq 1$, so
$1 \geq \lambda_M(\{b_{k-1},b_{k}\}) =
\rank_M(\{b_{k-1},b_{k}\}) + \corank_M(\{b_{k-1},b_{k}\}) - 2$.
Thus $\{b_{k-1},b_{k}\}$ is either dependent or codependent.
In either case we have a contradiction.
\end{proof}

\begin{definition}\label{def:fan}
    Let $M$ be a matroid, and $F = (x_1,x_2,\ldots,x_k)$ an ordered subset of $E(M)$, with $k \geq 3$. We say $F$ is a \emph{fan} of $M$ if, for all $i\in \{1, \ldots, k-2\}$, $T_i := \{x_i, x_{i+1}, x_{i+2}\}$ is either a triangle or a triad, and if $T_i$ is a triad, then $T_{i+1}$ is a triangle; if $T_i$ is a triangle then $T_{i+1}$ is a triad.
\end{definition}

Assume that $F=(x_{1},\ldots, x_{k})$ is a fan.
Then $F$ is a fan of $M^*$.
We say that $F$ is a \emph{maximal} fan if there is no
fan $(y_{1},\ldots, y_{l})$ such that $l > k$ and
$\{x_{1},\ldots, x_{k}\}\subseteq \{y_{1},\ldots, y_{l}\}$.
We say $x_i$ is a \emph{rim element} if $1<i<k$ and $x_{i}$ is contained in exactly one triangle that is contained in $\{x_{1},\ldots, x_{k}\}$, or if $i\in\{1,k\}$ and $x_{i}$ is contained in no such triangle.
We say $x_i$ is a \emph{spoke element} if it is not a rim element.

\begin{lemma}\label{lem:fan2sep}
    Let $M$ be a simple, cosimple, connected matroid, let $F = (x_1,\ldots, x_k)$ be a fan of $M$, and let $(A,B)$ be a $2$-separation of $M$. Then $M$ has a $2$-separation $(A',B')$ with $\{x_{1},\ldots, x_{k}\}\subseteq A'$.
\end{lemma}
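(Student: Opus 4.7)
My plan is to apply Lemma \ref{lem:simplecosimple2sep} twice---once on each side---and then argue that the fan must lie entirely in one of the two enlarged sides. First I apply the lemma to $(A, B)$ to obtain a 2-separation $(A_1, B_1)$ with $A_1 = \fullclosure_M(A)$; then I apply it again, to $(B_1, A_1)$, to obtain a 2-separation $(B_2, A_2)$ with $B_2 = \fullclosure_M(B_1)$. Both $A_1$ and $B_2$ are fully closed in $M$, and this forces the key property that $|T \cap A_1|$ and $|T \cap B_2|$ each lie in $\{0, 1, 3\}$ for every triangle or triad $T$ of $M$: if, say, $T$ were a triangle (resp.\ triad) with exactly two elements in $A_1$, the third would lie in $\closure_M(A_1) = A_1$ (resp.\ in $\coclosure_M(A_1) = A_1$).

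Next, a short counting argument shows that at least one $T_i$ of the fan is contained in $A_1$ or in $B_2$. Assume otherwise: then $|T_i \cap A_1| \leq 1$ and $|T_i \cap B_2| \leq 1$ for every $i$. Since $A_1$ and $B_1$ partition $E(M)$ and $B_1 \subseteq B_2$, we obtain $|T_i \cap B_2| \geq |T_i \cap B_1| = 3 - |T_i \cap A_1| \geq 2$, a contradiction.

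Finally, I propagate along the fan. Suppose $T_{i_0} \subseteq A_1$ (the case $T_{i_0} \subseteq B_2$ is symmetric, using $B_2$ in place of $A_1$). Consecutive fan members satisfy $T_j \cap T_{j+1} = \{x_{j+1}, x_{j+2}\}$, so $T_{i_0} \subseteq A_1$ forces $|T_{i_0+1} \cap A_1| \geq 2$, and full-closedness of $A_1$ then upgrades this to $T_{i_0+1} \subseteq A_1$. Iterating forward and backward along the fan, every $T_j$ lies in $A_1$, so $\{x_1, \ldots, x_k\} \subseteq A_1$, and $(A_1, B_1)$ is the required 2-separation $(A', B')$.

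The main obstacle is recognising that one application of full closure is not enough: closing only on the $A$-side rules out a $2$-$1$ split of a fan triangle or triad into $A_1$, but it permits a $1$-$2$ split, so without also closing the $B$-side no fan triangle or triad need land entirely on one side and the propagation cannot start. Once the second closure is introduced, the argument reduces to the simple counting and induction above.
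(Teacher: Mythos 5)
Your proposal is correct, and it rests on the same two ingredients as the paper's proof: Lemma \ref{lem:simplecosimple2sep} to pass to a fully closed side, and the observation that a fully closed set absorbing two elements of a fan triangle (or triad) must absorb the third, so that containment propagates along consecutive $T_i$'s. The difference is purely in how you get the propagation started. You close both sides in succession and run a counting argument to find some $T_i$ wholly inside $A_1$ or $B_2$; the paper instead notes that one of $A$, $B$ already contains at least two elements of $T_1=\{x_1,x_2,x_3\}$, relabels so that this side is $A$, and then a \emph{single} application of Lemma \ref{lem:simplecosimple2sep} gives $T_1\subseteq\fullclosure_M(A)$, after which each $x_i$ with $i\geq 4$ lies in $\closure_M(\{x_{i-2},x_{i-1}\})$ or $\coclosure_M(\{x_{i-2},x_{i-1}\})$ and is swept in. So the ``main obstacle'' you describe---that one closure permits a $1$--$2$ split of a fan triangle---is illusory: you are free to choose which side to close, and pigeonhole on $T_1$ tells you which one. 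Your version costs a second invocation of the lemma and the bookkeeping that $A_1$ and $B_2$ overlap, but it does buy the slightly stronger conclusion that \emph{every} triangle and triad of $M$ lies entirely in $A_1$ or entirely in $B_2$; for the lemma as stated this extra generality is not needed.
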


\begin{proof}
    Let $M$, $F$, and $(A,B)$ be as stated. Assume, by dualizing $M$ if necessary, that $T := \{x_1,x_2,x_3\}$ is a triangle. Clearly one of $A\cap T$ and $B\cap T$ has size at least two; by relabeling assume $|A\cap T| \geq 2$. By Lemma \ref{lem:simplecosimple2sep}, we can replace $(A,B)$ with $(\fullclosure_M(A), B\smin\fullclosure_M(A))$. If $\{x_{1},\ldots, x_{k}\}\subseteq \fullclosure_{M}(A)$ then we are done, so assume that $i \in \{1, \ldots, k\}$ is the smallest index such that $x_i\notin \fullclosure_{M}(A)$.
    Certainly $T$ is contained in $\fullclosure_{M}(A)$, so $i \geq 4$. But then either $x_i \in \closure_M(\{x_{i-1},x_{i-2}\})$ or $x_i\in\coclosure_M(\{x_{i-1},x_{i-2}\})$, which leads to a contradiction.
\end{proof}

In what follows, the elements of the wheel $\wheel{n}$ and whirl $\whirl{n}$ are labeled $\{s_1,r_1,s_2,\ldots, s_n,r_n\}$ where, for all indices $i$ (interpreted modulo $n$), $\{s_i, r_i, s_{i+1}\}$ is a triangle and $\{r_i, s_{i+1}, r_{i+1}\}$ is a triad. Hence, $\{s_1, \ldots, s_n\}$ is the set of spokes and $\{r_1, \ldots, r_n\}$ is the set of rim elements.

\begin{theorem}\label{thm:growfan}
    Let $M$ be a $3$-connected matroid, and let $F = (x_1,\ldots,x_k)$ be a fan of $M$ with $T := \{x_1,x_2,x_3\}$ a triangle. Let $n \geq 3$ be an integer, and relabel the elements $s_1$, $r_n$, $s_n$ of $\wheel{n}$ by $x_1$, $x_2$, $x_3$, in that order. Let $M' := P_{T}(\wheel{n},M)$, and $M'' := M'\del x_2$. Then $M''$ has the following properties:
    \begin{enumerate}
        \item\label{eq:growfan1}$(x_1, r_1, s_2, r_2,\ldots,s_{n-1}, r_{n-1},x_3, \ldots, x_k)$ is a fan of $M''$;
                \item\label{eq:growfan2} $M$ is isomorphic to a minor of $M''$, with the isomorphism fixing all elements but $x_2$;
        \item\label{eq:growfan3} $M''$ is $3$-connected;
    \end{enumerate}
\end{theorem}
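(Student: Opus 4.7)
The plan is to verify the three claims in turn, using Lemma~\ref{lem:genparconflats} as the main tool.

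For (\ref{eq:growfan1}), we check each triangle and triad of the proposed fan. The wheel-side triangles $\{s_i, r_i, s_{i+1}\}$ for $1 \le i \le n-1$ are triangles of $\wheel{n}$, hence circuits of $M'$ by Lemma~\ref{lem:genparconflats}(ii), and survive in $M''$ since none contains $x_2$. The wheel-side triads $\{r_i, s_{i+1}, r_{i+1}\}$ for $1 \le i \le n-2$ are triads of $\wheel{n}$ disjoint from $T$; their complements in $E(M')$ meet $E(\wheel{n})$ in a hyperplane of $\wheel{n}$ and meet $E(M)$ in all of $E(M)$, so Lemma~\ref{lem:genparconflats}(i) certifies them as flats of $M'$, and a short rank count confirms they are hyperplanes, making these sets cocircuits of $M'$ and hence of $M''$. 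The triangles and triads $\{x_i, x_{i+1}, x_{i+2}\}$ with $i \ge 3$ from the original fan of $M$ are handled in the same way. The delicate step is the transition triad $\{r_{n-1}, x_3, x_4\}$: we show that $\{r_{n-1}, x_2, x_3, x_4\}$ is a cocircuit of $M'$ by noting that its complement meets $E(\wheel{n})$ in the rank-$(n-1)$ flat obtained by deleting $r_{n-1}$, $x_2 = r_n$, and $x_3 = s_n$ (which isolates the rim vertex $v_n$), and meets $E(M)$ in the hyperplane complementary to the triad $\{x_2, x_3, x_4\}$ of $M$. Lemma~\ref{lem:genparconflats}(i), together with a rank count, then identifies the complement as a hyperplane of $M'$. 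Deleting $x_2$ and a brief minimality check give the desired triad of $M''$.

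For (\ref{eq:growfan2}), we commute the contractions and deletions with the parallel connection. Since each of $r_1, \ldots, r_{n-2}, s_2, \ldots, s_{n-1}$ lies in $E(\wheel{n}) \smin T$, Lemma~\ref{lem:genparconflats}(iii) and (iv) yield
\[
M''\con\{r_1, \ldots, r_{n-2}\}\del\{s_2, \ldots, s_{n-1}\} \;=\; P_T(N^{\circ}, M)\del x_2,
\]
where $N^{\circ} := \wheel{n}\con\{r_1, \ldots, r_{n-2}\}\del\{s_2, \ldots, s_{n-1}\}$. A direct computation in the wheel graph shows that $N^{\circ}$ is a rank-$2$ matroid on the four elements $\{x_1, x_2, x_3, r_{n-1}\}$ in which $T$ remains a triangle and $r_{n-1}$ is parallel to $x_2$. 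Hence $P_T(N^{\circ}, M)$ is $M$ with $r_{n-1}$ adjoined as a parallel copy of $x_2$, and deleting $x_2$ yields a matroid isomorphic to $M$ via the bijection that fixes $E(M) \smin x_2$ and sends $r_{n-1}$ to $x_2$.

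For (\ref{eq:growfan3}), we first observe that $M''$ is simple, cosimple, and connected: no loops, coloops, or parallel pairs appear in $M'$, and the potential series pairs in $M''$ would have to arise from $3$-cocircuits of $M'$ containing $x_2$---namely $\{x_1, x_2, r_1\}$ and $\{r_{n-1}, x_2, x_3\}$ from $\wheel{n}$ and $\{x_2, x_3, x_4\}$ from $M$---but all three are ruled out by Lemma~\ref{lem:genparconflats}(i) because in each case the candidate complement in $E(M')$ fails to be a flat. Supposing for contradiction that $(A, B)$ is a $2$-separation of $M''$, Lemma~\ref{lem:fan2sep} lets us take the fan of part (\ref{eq:growfan1}) inside $A$, so $B \subseteq E(M) \smin \{x_1, \ldots, x_k\}$. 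Applying the minor operations from (\ref{eq:growfan2}) produces $M''' \cong M$; since all removed elements lie in $A$, standard properties of the connectivity function give $\lambda_{M'''}(A \cap E(M''')) \le \lambda_{M''}(A) = 1$. Combined with $|A \cap E(M''')| \ge k \ge 3$ and $|B| \ge 2$, this yields a $2$-separation of $M$, contradicting its $3$-connectivity. The main obstacle is precisely this last step: establishing cosimplicity of $M''$ and pushing the 2-separation down to $M$ requires careful bookkeeping, while the remainder of the proof follows from systematic application of the flat criterion in Lemma~\ref{lem:genparconflats}(i).
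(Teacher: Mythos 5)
Your proof is correct, and for parts (\ref{eq:growfan1}) and (\ref{eq:growfan2}) it follows essentially the same path as the paper: the modularity of $T$ in $\wheel{n}$, the flat criterion of Lemma~\ref{lem:genparconflats}(i) to certify the wheel-side triads and the transition cocircuit $\{r_{n-1},x_2,x_3,x_4\}$, and the observation that contracting $r_1,\ldots,r_{n-2}$ and deleting $s_2,\ldots,s_{n-1}$ reduces $\wheel{n}$ to the triangle $T$ with $r_{n-1}$ parallel to $x_2$. The genuine divergence is in part (\ref{eq:growfan3}): the paper disposes of $3$-connectivity in one line by citing Oxley and Wu \cite[Corollary 2.8]{OW00}, whereas you give a self-contained argument, first verifying that $M''$ is simple, cosimple and connected, then using Lemma~\ref{lem:fan2sep} to push any $2$-separation so that the new fan lies on one side, and finally applying the minor operations of part (\ref{eq:growfan2}) together with the monotonicity of the connectivity function under minors to transfer the $2$-separation down to $M$. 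This buys independence from the external reference at the cost of some bookkeeping that you state rather than fully carry out: the completeness of your list of candidate triads of $M'$ through $x_2$ (which does follow, since the complement of such a triad must meet both $E(\wheel{n})$ and $E(M)$ in flats, forcing the triad to be a triad of both $3$-connected parts simultaneously), and the simplicity and connectedness of $M''$ required as hypotheses of Lemma~\ref{lem:fan2sep}. These are routine and do not affect the soundness of the argument.
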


\begin{proof}
    Let $M$, $F$, $T$, $n$, $M'$, and $M''$ be as stated, and define $N := \wheel{n}$. It follows immediately from \cite[Corollary 6.9.10]{oxley11} that $T$ is a modular flat of $N$, so $M'=P_{T}(N,M)$ is defined. It follows from Lemma \ref{lem:genparconflats} that $(s_1, r_1, \ldots, s_{n-1},r_{n-1},s_n)$ is a fan of $M'$ and of $M''$, since the complement in $M'$ of each triad of $N$ is a hyperplane of $M'$, and each triangle of $N$ other than $T$ is a triangle of $M'$. If $k=3$, then \eqref{eq:growfan1} holds. Hence we assume that $k \geq 4$. We only need to show that $\{r_{n-1},s_n,x_4\}$ is a triad of $M''$. Consider $H := E(M')\smin \{r_{n-1},s_n,r_n,x_4\}$. Since $H\cap E(N)$ and $H\cap E(M)$ are hyperplanes of their respective matroids, $H$ is a flat of $M'$. Since $\closure_{M'}(H\cup s_n) = E(M')$, it follows that $\{r_{n-1},s_n,r_n,x_4\}$ is a cocircuit of $M'$. But then $\{r_{n-1},s_n,x_4\}$ is a cocircuit of $M''$, as desired.
    
Statement \eqref{eq:growfan2} is a straightforward consequence of Lemma \ref{lem:genparconflats} and the observation that $\wheel{n}$ has a minor in which $\{s_1,r_n,s_n\}$ is a triangle and some element is in parallel with $r_n$.
Statement \eqref{eq:growfan3} follows immediately from \cite[Corollary 2.8]{OW00}.
\end{proof}
We will denote the matroid $M''$, as described in the statement of Theorem \ref{thm:growfan}, by $\growfan{T}{n}{M}$.
Theorem \ref{thm:growfan} shows that we can make a fan arbitrarily long while keeping 3-connectivity. Our next task is to show that we can do so without introducing certain minors. The following lemma, whose elementary proof we omit, will be useful:

\begin{lemma}\label{lem:4eltfan:no}
    Let $N$ be a $3$-connected matroid without $4$-element fans. Let $M$ be a $3$-connected matroid having $N$ as minor, and let $F$ be a $4$-element fan of $M$. Then $|F\cap E(N)| \leq 3$.
\end{lemma}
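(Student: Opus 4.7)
The plan is to argue by contradiction: suppose $F = (x_1, x_2, x_3, x_4) \subseteq E(N)$, and then show that the same ordered tuple witnesses a $4$-element fan in $N$, contradicting the hypothesis on $N$. Since the conclusion is trivial when $|E(N)| \leq 3$, I would immediately dispose of that case and assume $|E(N)| \geq 4$, so that $3$-connectedness of $N$ implies $N$ is both simple and cosimple.

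The heart of the argument is a standard but essential minor-theoretic observation: a triangle of $M$ all of whose elements lie in $E(N)$ is a triangle of $N$, and dually for triads. To see the triangle claim, write $N = M/C \setminus D$ with $C, D$ disjoint from $E(N)$. If $\{x_i, x_{i+1}, x_{i+2}\}$ is a triangle of $M$ with all three elements in $E(N)$, then its rank in $M$ is $2$. Deletion of elements outside the set leaves its rank unchanged, while contraction can only decrease rank; hence its rank in $N$ is at most $2$. Because $N$ is simple, no two-element subset is dependent, so the three-element set has rank exactly $2$ and is itself the unique circuit spanning it, i.e.\ a triangle of $N$. The dual argument, using that $N$ is cosimple, gives the triad version.

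Applied to the $4$-element fan $F$ of $M$, the two alternating three-element subsets $\{x_1, x_2, x_3\}$ and $\{x_2, x_3, x_4\}$ are preserved (one being a triangle, the other a triad, or vice versa) in passing to $N$. Therefore $F$ is a $4$-element fan of $N$ in the sense of Definition \ref{def:fan}, contradicting the hypothesis that $N$ has no $4$-element fan. This contradiction yields $|F \cap E(N)| \leq 3$.

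I do not expect any serious obstacle here; the entire content is the rank monotonicity argument in the previous paragraph together with simplicity/cosimplicity of $N$. The only subtlety worth checking carefully is that the orientation (triangle vs.\ triad) of each three-element piece is preserved rather than merely the property of being a triangle or a triad, but this is immediate because the two directions of the argument (triangle to triangle, triad to triad) are independent and rely only on $N$ being simple and cosimple respectively.
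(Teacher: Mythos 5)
Your proof is correct. The paper explicitly omits the proof of this lemma as elementary, and your argument — rank monotonicity under minors shows a triangle of $M$ contained in $E(N)$ stays a triangle of the simple matroid $N$, dually for triads, so all four fan elements surviving into $N$ would force a $4$-element fan there — is exactly the intended routine verification.
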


Recall that if $T$ is a  coindependent triangle of the matroid $M$,
then $\Delta_{T}(M)$ is the matroid obtained from $M$ by a
$\Delta\text{-}Y$ exchange (see \cite[Section 11.5]{oxley11}).

\begin{theorem}\label{thm:nofanfan}
    Let $N$ be a $3$-connected matroid with no $4$-element fan. Let $M$ be a $3$-connected matroid with at least $5$ elements that does not have an $N$-minor. Let $F = (x_1, \ldots, x_k)$ be a fan of $M$, where $T:=\{x_{1},x_{2},x_{3}\}$ is a triangle, and let $n\geq 3$ be an integer.
If $\growfan{T}{n}{M}$ has an $N$-minor, then so does
$\Delta_{T}(M)$.
\end{theorem}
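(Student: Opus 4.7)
The plan is to take the $N$-minor of $M'' := \growfan{T}{n}{M}$, written as $N \cong M''/C \setminus D$ for disjoint $C, D \subseteq E(M'')$, and to manipulate $C$ and $D$ until the $N$-minor is realized inside a minor of $M''$ isomorphic to $\Delta_T(M)$. I would proceed by induction on $n \geq 3$. The crucial structural fact underlying the induction is that $\Delta_T(M) = P_T(\wheel{3}, M) \setminus T = \growfan{T}{n}{M} \setminus \{x_1,x_3\}$ (after reducing the wheel part from $\wheel{n}$ to $\wheel{3}$), so $\Delta_T(M)$ is always a minor of $\growfan{T}{n}{M}$; the content of the theorem is that a specific $N$-minor can be placed inside this sub-minor.

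For the inductive step ($n \geq 4$), the goal is to show $\growfan{T}{n-1}{M}$ has an $N$-minor; the induction hypothesis then finishes the argument. The main computation is that for each $i \in \{1,\ldots,n-1\}$, the matroid $\wheel{n} / r_i \setminus s_{i+1}$ (indices mod $n$) is isomorphic to $\wheel{n-1}$; pushing this isomorphism through the generalized parallel connection via Lemma \ref{lem:genparconflats}, and choosing $i$ so that the contracted and deleted elements lie outside $T$, gives $M'' / r_i \setminus s_{i+1} \cong \growfan{T}{n-1}{M}$. It then suffices to show that $C, D$ can be modified, preserving the $N$-minor up to isomorphism, so that $r_i \in C$ and $s_{i+1} \in D$ for some admissible $i$. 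This uses Lemma \ref{lem:4eltfan:no} applied to the 4-element subfans of the long fan of $M''$, which forces many of the new wheel elements into $C \cup D$, combined with the following standard fan-swap arguments: if a triangle $\{a,b,c\}$ of the fan has $b \in C$, then $a, c$ become parallel in $M''/b$, so (since $N$, being 3-connected with at least four elements, has no parallel pair) exactly one of them must lie in $D$, and we can symmetrically interchange the roles of $a$ and $c$; a dual swap applies to triads and series pairs.

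For the base case $n = 3$, the new elements of $M''$ are exactly $\{r_1, s_2, r_2\}$, and the subfan containing them in the long fan of $M''$ is $(x_1, r_1, s_2, r_2, x_3)$. I would perform a case analysis on how $\{x_1, r_1, s_2, r_2, x_3\}$ is distributed among $C$, $D$, and $E(N)$, using Lemma \ref{lem:4eltfan:no} on the 4-subfans $(x_1, r_1, s_2, r_2)$ and $(r_1, s_2, r_2, x_3)$ together with the fan-swap arguments above. Any case in which the resulting $N$-minor is realized as a minor of $M'' \setminus \{r_1, s_2, r_2\} = M \setminus x_2$ or, more generally, inside the gen\-eralized parallel connection reduction $P_T(\wheel{3}/A \setminus B, M)$ that collapses the wheel part back to $M$ (possibly with parallel extensions at $x_1$ or $x_3$), would give an $N$-minor of $M$, contradicting the hypothesis. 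The remaining cases all force $\{x_1, x_3\} \subseteq D$, placing the $N$-minor inside $\growfan{T}{3}{M}\setminus\{x_1,x_3\} = \Delta_T(M)$, as required.

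The main obstacle is the fan-swap bookkeeping: each swap of a contraction with a deletion inside a triangle or triad must be verified to preserve the isomorphism type of the minor, and the swaps must compose consistently to force the specific configuration of $C$ and $D$ needed. A secondary subtlety arises at the ends of the long fan, near $x_1$ and $x_3$, where swaps interact with elements of the original fan $F$ of $M$; those elements belong to $E(M)$ and cannot be manipulated as freely as the interior wheel elements, so the extremal cases of the case analysis require separate treatment using the assumption that $M$ has no $N$-minor.
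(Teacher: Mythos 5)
Your proposal is correct and follows essentially the same route as the paper's proof: induction on $n$ (the paper phrases it as choosing $n$ minimal), with the reduction $\wheel{n}\con r_1\del s_2\cong\wheel{n-1}$ pushed through the generalized parallel connection, Lemma \ref{lem:4eltfan:no} plus parallel/series swaps to force the removed wheel elements into the right positions, and a base case at $n=3$ where collapsing the wheel part yields a contradictory $N$-minor of $M$ and the surviving cases place $N$ inside $P_T(\wheel{3},M)\del T\cong\Delta_T(M)$. The only detail worth making explicit is that this last identification requires $T$ to be coindependent in $M$, which is where the hypothesis $|E(M)|\geq 5$ enters.
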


\begin{proof}
We will assume that $n\geq 3$ has been chosen so that it is as small as possible, subject to the condition that $\growfan{T}{n}{M}$ has an $N$-minor.
Let $N'$ be a minor of $\growfan{T}{n}{M}$ that is isomorphic to $N$.

First assume that $n\geq 4$.
Since $\{r_{1},s_{2},r_{2},s_{3}\}$ is a 4-element fan of $\growfan{T}{n}{M}$, it follows from Lemma \ref{lem:4eltfan:no} that this set is not contained in $E(N)$.
We claim that $\growfan{T}{n}{M}\con r_{1}\del s_{2}$ has an
$N$-minor.
Assume this is not the case.
If $\growfan{T}{n}{M}\con r_{1}$ has an $N$-minor, then,
as $\{s_{1},s_{2}\}$ is a parallel pair,
$\growfan{T}{n}{M}\con r_{1}\del s_{2}$ has an $N$-minor.
Therefore $\growfan{T}{n}{M}\con r_{1}$
does not have an $N$-minor.
Similarly, $\{r_{1},r_{2}\}$ is a series pair in
$\growfan{T}{n}{M}\del s_{2}$, so we assume that
$\growfan{T}{n}{M}\del s_{2}$ has no $N$-minor.
As $\{s_{2},s_{3}\}$ is a parallel pair in
$\growfan{T}{n}{M}\con r_{2}$, this means that
$\growfan{T}{n}{M}\con r_{2}$ has no $N$-minor.
Moreover, $\{r_{2},r_{3}\}$ is a series pair in
$\growfan{T}{n}{M}\del s_{3}$, so this matroid
does not have an $N$-minor.
As $\{s_{2},r_{2}\}$ is a series pair in
$\growfan{T}{n}{M}\del r_{1}$, and we concluded that
$\growfan{T}{n}{M}\con r_{2}$ has no $N$-minor, neither
does $\growfan{T}{n}{M}\del r_{1}$.
Since $\{r_{1},s_{1}\}$ is a parallel pair in
$\growfan{T}{n}{M}\con s_{2}$, and deleting
$r_{1}$ destroys all $N$-minors,
$\growfan{T}{n}{M}\con s_{2}$ has no $N$-minor.
Deleting $r_{2}$ creates the series pair $\{r_{1},s_{2}\}$,
and contracting $r_{1}$ destroys all $N$-minors, so
$\growfan{T}{n}{M}\del r_{2}$ does not have an $N$-minor.
Lastly, contracting $s_{3}$ creates the parallel pair
$\{s_{2},r_{2}\}$, so $\growfan{T}{n}{M}\con s_{3}$
does not have an $N$-minor, or else
$\growfan{T}{n}{M}\del s_{2}$ does.
From this discussion, we conclude that
$\{r_{1},s_{2},r_{2},s_{3}\}\subseteq E(N')$,
contradicting our earlier conclusion.
Therefore $\growfan{T}{n}{M}\con r_{1}\del s_{2}$
has an $N$-minor.

Since contracting $r_{1}$ and deleting $s_{2}$ from
$\wheel{n}$ produces a copy of $\wheel{n-1}$, it follows
easily from Lemma \ref{lem:genparconflats} that
$\growfan{T}{n}{M}\con r_{1}\del s_{2}$ is isomorphic to
$\growfan{T}{n-1}{M}$.
Thus our assumption on the minimality of $n$ is contradicted.
Now we must assume that $n=3$.

If $\{r_{1},s_{2},r_{2}\}\nsubseteq E(N')$, then as this set is
a triad in $\growfan{T}{n}{M}$, obtaining $N'$ involves contracting
one of $\{r_{1},s_{2},r_{2}\}$.
Contracting any of these elements in $\wheel{3}$ produces a
matroid consisting of the triangle $\{s_{1},r_{3},s_{3}\}$
with parallel points added to two distinct elements.
Now we can use Lemma \ref{lem:genparconflats} to show that
contracting an element in $\{r_{1},s_{2},r_{2}\}$
from $\growfan{T}{n}{M}$ produces a matroid that is isomorphic
to $M$ or $M\del x_{2}$, up to the addition of parallel elements.
Therefore $M$ has an $N$-minor, contrary to hypothesis.
It follows that $\{r_{1},s_{2},r_{2}\}\subseteq E(N')$.

Since $\{s_{1},s_{2},r_{2}\}$ is a triangle in
$\growfan{T}{n}{M}$, we deduce that $s_{1}\notin E(N')$, or else
$(s_{1},r_{1},s_{2},r_{2})$ is a $4$-element fan in
$N'$.
Since $\{r_{1},s_{2}\}$ is a parallel pair in
$\growfan{T}{n}{M}\con s_{1}$, and $N'$ contains no parallel pairs,
$N'$ is a minor of $\growfan{T}{n}{M}\del s_{1}$.
As $\{s_{2},r_{2},s_{3}\}$ is also a triangle in
$\growfan{T}{n}{M}$, we can use exactly the same arguments to show that
$N'$ is a minor of $\growfan{T}{n}{M}\del s_{3}$.
So $N'$ is a minor of
$P_{T}(\wheel{3},M)\del T$.
Since $|E(M)|\geq 5$, it is easy to prove that any triangle of $M$ is coindependent
(\cite[Lemma 8.7.5]{oxley11}).
Therefore $P_{T}(\wheel{3},M)\del T$ is
isomorphic to
$\Delta_{T}(M)$, and we are done.
\end{proof}

\section{Infinite families}\label{sec:infinites}
In this section we describe a collection of matroids to which
we can apply our operation of growing fans.
Recall that $\mathcal{O}$, $\mathcal{S}$, and $\mathcal{N}$, respectively, denote the sets of excluded minors for $\GF(4)$-representable,
$\psru$-representable, and
$\nreg$-representable matroids, as listed in
Theorems \ref{thm:gf4excluded},
\ref{thm:sruexcluded}, and \ref{thm:nregexcluded}.

Let $M_{8}$ be the rank-$3$ matroid shown in
Figure \ref{fig:M9}.
Then $M_{8}$ is represented over $\GF(3)$ by 
$[I_{3}\ A]$, where $A$ is the following matrix.
\[
\kbordermatrix{
 &4&5&6&7&8\\
1&1&1&0&1&0\\
2&1&0&1&1&1\\
3&0&1&1&1&-1\\
}
\]

\begin{figure}[htb]
    \centering
        \includegraphics[scale=1]{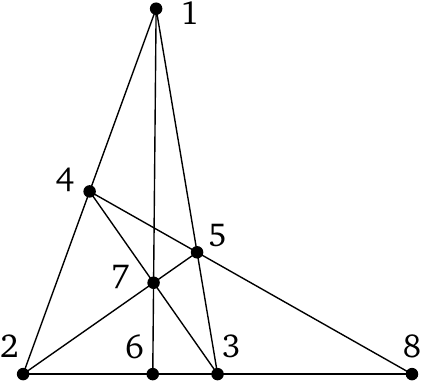}
    \caption{Geometric representation of $M_{8}$.}
    \label{fig:M9}
\end{figure}

\begin{lemma}\label{lem:M9}
Let $T$ be the triangle $\{3,6,8\}$ of $M_{8}$.
If $n\geq 3$ is an integer, then $\growfan{T}{n}{M_{8}}$
is $3$-connected, and contains an $F_{7}^{-}$-minor but no
minor in $(\mathcal{O}\cup\mathcal{S}\cup\mathcal{N})-\{F_{7}^{-}\}$.
\end{lemma}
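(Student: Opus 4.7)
The plan handles the three assertions in sequence. For $3$-connectivity, $M_{8}$ is itself $3$-connected (visible from its geometric representation), so Theorem~\ref{thm:growfan}(iii) applied to the length-$3$ fan $(3,6,8)$ yields the $3$-connectivity of $\growfan{T}{n}{M_{8}}$. For the $F_{7}^{-}$-minor, deleting column $8$ from the representation of $M_{8}$ produces exactly the matrix $[I_{3}\ A_{7}]$ defining $F_{7}^{-}$ in Section~\ref{sec:splitters}, so $M_{8}\del 8\cong F_{7}^{-}$, and Theorem~\ref{thm:growfan}(ii) transfers the minor to $\growfan{T}{n}{M_{8}}$.

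For the remaining claim, I would first observe that $\growfan{T}{n}{M_{8}}$ is $\GF(3)$-representable, since the generalized parallel connection of two ternary matroids along a modular flat (a triangle) is ternary. This at once eliminates the non-ternary members of $(\mathcal{O}\cup\mathcal{S}\cup\mathcal{N})-\{F_{7}^{-}\}$, namely $U_{2,5}$, $U_{3,5}$, $U_{2,6}$, $U_{4,6}$, $F_{7}$, $F_{7}^{*}$, $P_{6}$, and $P_{8}^{=}$. The remaining (ternary) candidates $N\in\{\nfd, P_{8}, \agde, (\agde)^{*}, \Delta_{3}(\agde)\}$ will be handled via Theorem~\ref{thm:nofanfan}, applied with $M=M_{8}$ and the length-$3$ fan $(3,6,8)$.

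For each such $N$, three hypotheses must be verified. First, $N$ has no $4$-element fan: $\nfd$, $P_{8}$, and $(\agde)^{*}$ have no triangles, $\agde$ has no triads, and direct inspection of $\Delta_{3}(\agde)$ shows that no triangle-triad pair overlaps in two elements. Second, $M_{8}$ has no $N$-minor: this follows by rank for all candidates except $\agde$, and for $\agde$ from the fact that $M_{8}$ has the 4-point line $\{2,3,6,8\}$ while $\agde$ has only 3-point lines.

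The main obstacle is the third hypothesis, that $\Delta_{T}(M_{8})$ has no $N$-minor. I would compute $\Delta_{T}(M_{8}) = P_{T}(M(K_{4}),M_{8})\del T$ explicitly as a rank-$4$ ternary matroid on $8$ elements. Rank-cardinality considerations rule out $\agde$ (rank $3$, $8$ elements) and $(\agde)^{*}$ (rank $5$). Because the triangles $\{1,2,4\}$ and $\{2,5,7\}$ of $M_{8}$ are disjoint from $T$, they persist as triangles in $\Delta_{T}(M_{8})$; hence $\Delta_{T}(M_{8})$ has at least two triangles, whereas $P_{8}$ has none and $\Delta_{3}(\agde)$ has exactly one, so the rank-$4$, $8$-element candidates $P_{8}$ and $\Delta_{3}(\agde)$ cannot be minors of $\Delta_{T}(M_{8})$. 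Finally, $\nfd$ has $7$ elements and no triangles; since the two surviving triangles of $\Delta_{T}(M_{8})$ intersect only in the element $2$, the only single-element deletion with no triangles is $\Delta_{T}(M_{8})\del 2$, and a direct triad count (five for $\Delta_{T}(M_{8})\del 2$ versus six for $\nfd$) completes the verification.
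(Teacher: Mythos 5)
Your proposal is correct and follows the same architecture as the paper's proof: Theorem~\ref{thm:growfan} gives $3$-connectivity and the $F_7^-$-minor via $M_8\del 8\cong F_7^-$, Theorem~\ref{thm:nofanfan} reduces the exclusion of further minors to a finite check on $M_8$ and $\Delta_T(M_8)$, and the eliminations by rank, element count, and triangle count are the ones the paper uses. You deviate in two minor sub-steps, both valid. First, you rule out the non-ternary candidates by noting that $\growfan{T}{n}{M_8}$ is itself ternary (a generalized parallel connection of ternary matroids across a modular flat, followed by a deletion), whereas the paper invokes ternarity of $M_8$ and $\Delta_T(M_8)$ only after the reduction via Theorem~\ref{thm:nofanfan}; your route has the small advantage of not needing the no-$4$-element-fan hypothesis for those candidates. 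Second, for $\nfd$ the paper argues that $\Delta_T(M_8)\del 2\cong\nfd$ would force $M_8\del 2\cong F_7^-$ after a $Y\text{-}\Delta$ exchange on the triad $\{3,6,8\}$, which fails because $M_8\del 2$ has disjoint triangles; your direct triad count is an equally sound finite check, and I have verified it: $\Delta_T(M_8)\del 2$ has exactly five triads while $\nfd$ has six (the triangles of $F_7^-$).
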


\begin{proof}
Clearly $M_{8}$ is $3$-connected, and $(3,6,8)$ is a fan, so we
can apply Theorem \ref{thm:growfan}.
Thus $\growfan{T}{n}{M_{8}}$ is $3$-connected, by
statement \eqref{eq:growfan3}.
Since $M_{8}\del 8$ is isomorphic to $F_{7}^{-}$, it follows from
statement \eqref{eq:growfan2} that $\growfan{T}{n}{M_{8}}$
has an $F_{7}^{-}$-minor for any $n\geq 3$.

Now assume that $\growfan{T}{n}{M_{8}}$ has a minor in
$(\mathcal{O}\cup\mathcal{S}\cup\mathcal{N})-\{F_{7}^{-}\}$.
Therefore either $M_{8}$ or $\Delta_{T}(M_{8})$ has such a minor,
by Theorem \ref{thm:nofanfan}.
Since $M_{8}$ and $\Delta_{T}(M_{8})$ are both ternary
(\cite[Lemma 11.5.13]{oxley11}), neither has a minor isomorphic to
$U_{2,6}$, $U_{4,6}$, $P_6$, $P_8^{=}$,
$U_{2,5}$, $U_{3,5}$, $F_7$, or $F_7^*$.
As $\rank(M_{8})=3$, and $\rank(\Delta_{T}(M_{8}))=4$, neither
contains $(\agde)^*$.
Since $\rank(M_{8})=3$, and $\Delta_{T}(M_{8})$ contains the
triangle $\{2,5,7\}$ and has $8$ elements,
neither contains $P_{8}$.
As $\Delta_{T}(M_{8})$ has rank $4$, and $8$ elements, it
does not contain $\agde$.
As $M_{8}$ has $8$ elements and a $4$-point line, it does not
contain $\agde$.
Similarly, $M_{8}$ has rank $3$, so  it does not contain
$\Delta_{3}(\agde)$.
Also $\Delta_{T}(M_{8})$ has two triangles,
$\{2,5,7\}$ and $\{1,2,4\}$, so it does not contain
$\Delta_{3}(\agde)$ either.
The only matroid left to check is $\nfd$.
Obviously $M_{8}$ does not contain an $\nfd$-minor.
Assume that $\Delta_{T}(M_{8})$ does.
As $\nfd$ has no triangles, $\Delta_{T}(M_{8})\del 2$
must be isomorphic to $\nfd$.
Now $\{3,6,8\}$ is a triad of this matroid, and
performing a $Y\text{-}\Delta$ exchange on this triad
should produce a copy of $F_{7}^{-}$.
Instead it produces a copy of $M_{8}\del 2$, which
contains disjoint triangles, and is therefore not
isomorphic to $F_{7}^{-}$.
\end{proof}

Let $M_{9}$ be the matroid represented by $[I_{4}\ A]$
over $\GF(3)$, where $A$ is the following matrix.
\[
    \kbordermatrix{          & 5 & 6 & 7  & 8 & 9\\
                           1 & 1 & 0 & -1 & 0 & 1\\
                           2 & 1 & 0 & 1  & 1 & 1\\
                           3 & 1 & 1 & 0  & 1 & 0\\
                           4 & 0 & 1 & 1  & -1 &0
}
\]
Then $M_{9}$ is represented by the geometric diagram in
Figure \ref{fig:M11}.

\begin{figure}[htb]
    \centering
        \includegraphics[scale=1]{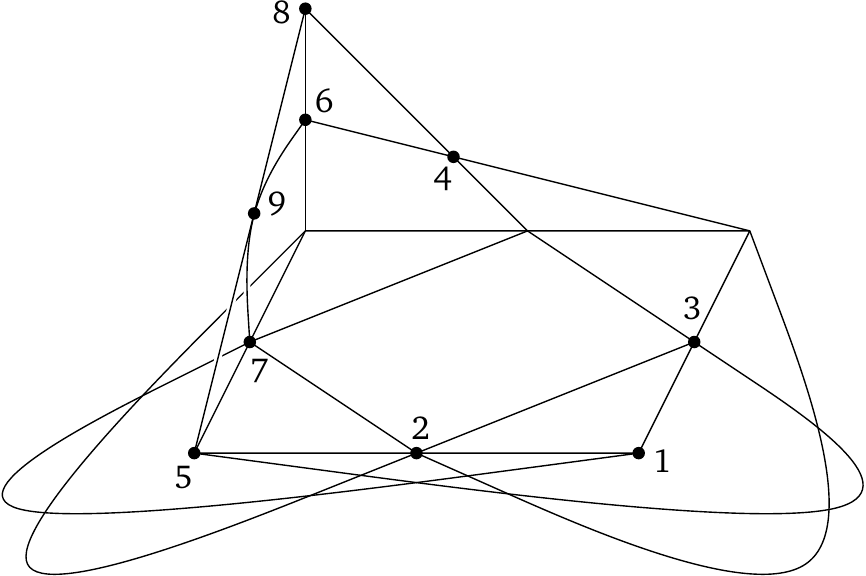}
    \caption{Geometric representation of $M_{9}$.}
    \label{fig:M11}
\end{figure}

\begin{lemma}\label{lem:M11}
Let $T$ be the triangle $\{3,5,9\}$ of $M_{9}$.
If $n\geq 3$ is an integer, then $\growfan{T}{n}{M_{9}}$
is $3$-connected, and contains an $\Delta_{3}(\agde)$-minor, but no
minor in $\mathcal{N}-\{\Delta_{3}(\agde)\}$.
\end{lemma}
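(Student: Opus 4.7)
The proof follows the same template as Lemma~\ref{lem:M9}. First, I would verify from the given $\GF(3)$-matrix that $M_{9}$ is $3$-connected and that $\{3,5,9\}$ is a triangle (column~$5$ equals column~$3$ plus column~$9$ over $\GF(3)$). Thus $F=(3,5,9)$ is a length-$3$ fan with initial triangle $T$, and Theorem~\ref{thm:growfan} immediately yields that $\growfan{T}{n}{M_{9}}$ is $3$-connected and has $M_{9}$ as a minor. Moreover, the first four columns of the matrix of $M_{9}$ agree with the matrix in equation~\eqref{agdematrix}, so $M_{9}\del 9\cong\Delta_{3}(\agde)$, giving the required $\Delta_{3}(\agde)$-minor of $\growfan{T}{n}{M_{9}}$.

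Now suppose $\growfan{T}{n}{M_{9}}$ has a minor $N\in\mathcal{N}-\{\Delta_{3}(\agde)\}$. Each such $N$ is $3$-connected with no $4$-element fan, so by Theorem~\ref{thm:nofanfan} either $M_{9}$ or $\Delta_{T}(M_{9})$ has an $N$-minor. Both matroids are $\GF(3)$-representable (using \cite[Lemma 11.5.13]{oxley11} for $\Delta_{T}(M_{9})$), so $U_{2,5}, U_{3,5}, F_{7}, F_{7}^{*}$ are immediately excluded. Since $\rank(M_{9})=4$, it cannot contain $(\agde)^{*}$ (rank $5$). Since $\Delta_{T}(M_{9})$ has rank $5$ and $9$ elements, no minor of it can be isomorphic to $\agde$ (rank~$3$, $8$ elements), as contracting two non-loops leaves only $7$ elements.

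For the surviving candidates --- namely $F_{7}^{-}, \nfd, \agde, P_{8}$ as possible minors of $M_{9}$, and $F_{7}^{-}, \nfd, (\agde)^{*}, P_{8}$ as possible minors of $\Delta_{T}(M_{9})$ --- I would exploit the fact that $\Delta_{3}(\agde)$ is an excluded minor for $\nreg$: by minimality, every proper minor of $\Delta_{3}(\agde)$ is $\nreg$-representable and therefore lies outside $\mathcal{N}$. Since $M_{9}\del 9=\Delta_{3}(\agde)$, any minor of $M_{9}$ lying in $\mathcal{N}-\{\Delta_{3}(\agde)\}$ must arise either by contracting element~$9$ or by a sequence of operations that retains element~$9$; an analogous reduction applies to $\Delta_{T}(M_{9})$. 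The main obstacle is the ensuing finite case check: each sub-case must be dispatched by a structural obstruction, such as a surviving triangle (which rules out $P_{8}$, since $P_{8}$ is triangle-free), a rank/size mismatch after simplification or cosimplification, or the presence of specific parallel pairs. The most delicate cases are those where a single operation matches the target matroid's rank and size (for example, single deletions of $M_{9}$ against $P_{8}$); these are handled individually, in the same style as the proof of Lemma~\ref{lem:M9}.
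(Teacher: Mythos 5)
Your setup is right and matches the paper's strategy: Theorem~\ref{thm:growfan} gives $3$-connectivity and the $\Delta_{3}(\agde)$-minor (via $M_{9}\del 9$), and Theorem~\ref{thm:nofanfan} reduces the problem to showing that neither $M_{9}$ nor $\Delta_{T}(M_{9})$ has a minor in $\mathcal{N}-\{\Delta_{3}(\agde)\}$. Your preliminary reductions (ternarity kills $U_{2,5}$, $U_{3,5}$, $F_{7}$, $F_{7}^{*}$; rank and size counting kills $(\agde)^{*}$ in $M_{9}$ and $\agde$ in $\Delta_{T}(M_{9})$) are correct, and your observation that any offending minor of $M_{9}$ must ``use'' the element $9$, because every proper minor of $M_{9}\del 9=\Delta_{3}(\agde)$ is near-regular by the minimality of excluded minors, is a legitimate shortcut that the paper does not use. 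However, the proposal stops exactly where the lemma's content begins: the finite case check for $F_{7}^{-}$, $\nfd$, $P_{8}$, and $\agde$ as minors of $M_{9}$ is announced but not performed, and these cases are not automatic (e.g.\ ruling out $F_{7}^{-}$ requires examining each single contraction of $M_{9}$ and locating disjoint triangles or irreparable parallel pairs in the result). A deferred case analysis is not a proof of the lemma.

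More seriously, your claim that ``an analogous reduction applies to $\Delta_{T}(M_{9})$'' is not correct as stated, and this is where a genuine idea is missing. There is no element of $\Delta_{T}(M_{9})$ playing the role of $9$, and the mechanism the paper uses for $\Delta_{T}(M_{9})$ is entirely different: if the offending minor $N'$ does not contain all of $T$, then some $x\in T$ is contracted, and the identity $\Delta_{T}(M_{9})\con x\cong M_{9}\del x$ (from Oxley--Semple--Vertigan) throws the problem back onto $M_{9}$; otherwise $T$ must be a triad of $N'$, which immediately eliminates $F_{7}^{-}$, $P_{8}$, and $\agde$ (none has a triad) and leaves only $\nfd$ and $(\agde)^{*}$, which are then excluded by applying $\nabla_{T}$ to pull the putative minor back to a minor of $M_{9}$ ($\nabla_{T}(\nfd)\cong F_{7}^{-}$ and $\nabla_{T}((\agde)^{*})\cong(\Delta_{3}(\agde))^{*}\cong\Delta_{3}(\agde)$, each of which is then refuted inside $M_{9}$ with $T$ constrained to survive). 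Without this $\Delta$-$Y$/$Y$-$\Delta$ machinery you have no way to dispose of, say, an $\nfd$- or $(\agde)^{*}$-minor of $\Delta_{T}(M_{9})$, so the argument for the co-extension side is not merely unfinished but lacks its essential tool.
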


\begin{proof}
It is clear that we can apply Theorem \ref{thm:growfan}.
Thus $\growfan{T}{n}{M_{9}}$ is $3$-connected.
Since $M_{9}\del 9$ is isomorphic to $\Delta_{3}(\agde)$,
it follows that $\growfan{T}{n}{M_{9}}$
has a $\Delta_{3}(\agde)$-minor for any $n\geq 3$.
If the lemma is false, then by Theorem \ref{thm:nofanfan},
either $M_{9}$ or $\Delta_{T}(M_{9})$ contains as a minor
a ternary member of $\mathcal{N}-\{\Delta_{3}(\agde)\}$, which is to say,
one of $F_{7}^{-}$, $\nfd$, $P_{8}$,
$\agde$, or $(\agde)^{*}$.

We start by noting that in $M_9\con 7$, the sets $\{3,5,8,9\}$ and $\{1,2,4,9\}$ are
4-point lines. Therefore any $7$-element restriction of $M_{9}\con 7$ has
either a $4$-point line or two disjoint triangles. It follows that $M_{9}\con 7$ has no minor in $\mathcal{N}$. As $\{2,3,4,6\}$ and $\{3,5,7,9\}$ are $4$-point lines of $M_{9}\con 8$, we can also see that $M_{9}\con 8$ has no minor in $\mathcal{N}$.

The triangles of $M_{9}$ are $\{1,2,9\}$, $\{3,5,9\}$, and $\{3,4,6\}$.
It follows easily that every $8$-element restriction of $M_{9}$ contains
at least one triangle, so $M_9$ does not have $P_8$ as minor. The rank of $M_9$ is too low to have $(\agde)^*$ as minor. If $M_9$ has $\agde$ as minor, then this minor must be obtained by a single contraction. Since $\agde$ is simple, we cannot contract an element from a 3-point line. This leaves only elements $7$ and $8$, and we have already decided that contracting either of these does not produce a minor in $\mathcal{N}$.

Suppose $M_{9}$ has a $\nfd$-minor. To obtain this minor we must delete two elements in such a way that no triangles remain. Since deleting $9$ gives us $\Delta_{3}(\agde)$ again, which has no $\nfd$-minor, we must delete $3$ and one of $\{1,2\}$. But $M_9\del \{1,3\}$ has disjoint triads $\{2,4,6\}$ and $\{5,7,9\}$, whereas $M_9\del \{2,3\}$ has disjoint triads $\{1,7,8\}$ and $\{4,5,9\}$. Hence neither is isomorphic to $\nfd$.

Therefore we assume that $M_{9}$ has an $F_{7}^{-}$-minor.
We must contract a single element from $M_{9}$, and then
delete a single element to obtain a copy of $F_{7}^{-}$.
If we contract either $3$ or $9$, then we produce
two disjoint parallel pairs, which cannot be rectified with a single deletion.
If we contract one of $1$, $2$, $4$, or $6$ then we create a single parallel pair, so up to isomorphism we must delete, respectively, $2$, $1$, $6$, or $4$ to obtain a copy of $F_{7}^{-}$.
But in these minors, the triangle $\{3,5,9\}$ is disjoint from,
respectively, the triangles
$\{6,7,8\}$, $\{4,6,8\}$, $\{1,2,7\}$, and $\{1,7,8\}$.
Therefore we do not contract $1$, $2$, $3$, $4$, $6$, $7$, $8$, or $9$.
If we contract $5$, then up to isomorphism we must delete $3$ to
obtain a copy of $F_{7}^{-}$, but in this minor
$\{1,4,8\}$ and $\{2,6,7\}$ are disjoint triangles.
Thus $M_{9}$ does not contain a minor in
$\mathcal{N}-\{\Delta_{3}(\agde)\}$.

Assume that $\Delta_{T}(M_{9})$ contains a minor $N'$ that
is isomorphic to a ternary member of $\mathcal{N}-\{\Delta_{3}(\agde)\}$.
If $T\nsubseteq E(N')$, then an element $x\in T$ is contracted
to obtain $N'$.
But $\Delta_{T}(M_{9})\con x\cong M_{9}\del x$, by
\cite[Lemma 2.13]{OSV00}, so $N'$ is isomorphic to a
minor of $M_{9}$.
Since this contradicts the conclusion of the previous paragraph, it follows
that $T$ is a triad of $N'$.
Therefore $N'$ is isomorphic to $\nfd$, or $(\agde)^{*}$.
It follows easily from \cite[Corollary 2.17]{OSV00}
and Seymour's Splitter Theorem, that
$\nabla_{T}(N')$ is a minor of
$\nabla_{T}(\Delta_{T}(M_{9}))=M_{9}$.
If $N'\cong \nfd$, then $\nabla_{T}(N)\cong F_{7}^{-}$,
and this leads to a contradiction.
Therefore $N'\cong (\agde)^{*}$.
The definition of $Y\text{-}\Delta$ exchange implies that
$\nabla_{T}(N')\cong (\Delta_{3}(\agde))^{*}$.
But $\Delta_{3}(\agde)$ is a self-dual matroid, so
$M_{9}$ has a minor isomorphic to $\Delta_{3}(\agde)$ that contains
$\{3,5,9\}$ in its ground set.
To obtain this minor, we must delete a single element,
but in each case the result has two triangles, namely $\{3,5,9\}$ and at least one of $\{1,2,9\}$ and $\{3,4,6\}$.
This is a contradiction as $\agde$ has only one triangle.
\end{proof}

For a third infinite class, consider the following matrix, $A$, over $\GF(8)$. Here $\alpha$ is an element that satisfies $\alpha^3 + \alpha + 1 = 0$.
Let $M_{7}$ be $[I_{3}\ A]$.
A geometric representation of $M_{7}$ can be found in Figure \ref{fig:M8}.

\[
    \kbordermatrix{ & 4 & 5 & 6 & 7\\
                           1 & 1 & 1 & 0 & 1\\
                           2 & 1 & 0 & 1 & \alpha\\
                           3 & 0 & 1 & \alpha & \alpha^{2}}
\]
\begin{figure}[htb]
    \centering
        \includegraphics[scale=1]{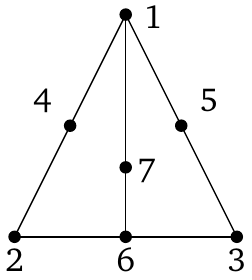}
    \caption{Geometric representation of $M_{7}$.}
    \label{fig:M8}
\end{figure}

\begin{lemma}\label{lem:M8}
Let $T$ be the triangle $\{1,2,4\}$ of $M_{7}$.
If $n\geq 3$ is an integer, then $\growfan{T}{n}{M_{7}}$
is $3$-connected, and contains a $P_{6}$-minor, but no
minor in $\mathcal{O}-\{P_{6}\}$.
\end{lemma}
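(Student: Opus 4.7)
The plan is to follow the template of Lemmas \ref{lem:M9} and \ref{lem:M11}. First I would note that $T = \{1,2,4\}$ is a triangle of $M_{7}$ (the columns indexed $1, 2, 4$ in the matrix for $M_{7}$ are $e_1, e_2, e_1+e_2$), so Theorem \ref{thm:growfan} applies and $\growfan{T}{n}{M_{7}}$ is $3$-connected. Next, deleting element $1$ destroys the triangles $\{1,2,4\}, \{1,3,5\}, \{1,6,7\}$, leaving only the triangle $\{2,3,6\}$; since $M_{7}\del 1$ is a rank-$3$ matroid on six elements with a unique triangle, $M_{7}\del 1 \cong P_{6}$. Statement \eqref{eq:growfan2} of Theorem \ref{thm:growfan} then supplies a $P_{6}$-minor of $\growfan{T}{n}{M_{7}}$.

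For the negative part of the claim, I would invoke Theorem \ref{thm:nofanfan}. The members of $\mathcal{O}\smin\{P_{6}\} = \{U_{2,6},U_{4,6},F_{7}^{-},\nfd,P_{8},P_{8}^{=}\}$ are all $3$-connected, and none has a $4$-element fan ($U_{2,6}$ and $F_{7}^{-}$ have no triads, $U_{4,6}$ and $\nfd$ have no triangles, while $P_{8}$ and $P_{8}^{=}$ can be inspected directly). By Theorem \ref{thm:nofanfan} it then suffices to show that neither $M_{7}$ nor $\Delta_{T}(M_{7})$ has a minor in $\mathcal{O}\smin\{P_{6}\}$. Note that $T$ is coindependent in $M_{7}$, because columns $3,5,6,7$ span $\GF(8)^{3}$, so $\Delta_{T}(M_{7})$ has rank $4$.

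The case-check then proceeds as in Lemmas \ref{lem:M9} and \ref{lem:M11}, using size and rank. Since $|E(M_{7})| = |E(\Delta_{T}(M_{7}))| = 7$, the matroids $P_{8}$ and $P_{8}^{=}$ are ruled out immediately. Rank considerations eliminate $U_{4,6}, \nfd, F_{7}^{-}$ in the rank-$3$ matroid $M_{7}$ (with the $F_{7}^{-}$ case using a size argument followed by the triangle-count disparity: $M_{7}$ has four triangles while $F_{7}^{-}$ has six), and $U_{2,6}, F_{7}^{-}$ in $\Delta_{T}(M_{7})$ (rank $4$ cannot drop to $2$ or $3$ by removing a single element). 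For $U_{2,6}$ in $M_{7}$, I would observe that all lines of $M_{7}$ have size at most three, and contracting any element yields at most three simple parallel classes. The case $U_{4,6}$ in $\Delta_{T}(M_{7})$ requires checking each possible deletion and verifying that some $3$-subset fails to be a triad, which reduces to a rank computation (e.g.\ $\{3,5,6\}$ is not a triad of $\Delta_{T}(M_{7})$ because $\rank_{\Delta_{T}(M_{7})}(\{1,2,4,7\}) = 4$).

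The main obstacle is $\nfd$ as a minor of $\Delta_{T}(M_{7})$, since both have rank $4$ on seven elements. I would dispatch this with the $\Delta\text{-}Y$/$Y\text{-}\Delta$ argument from the end of Lemma \ref{lem:M11}. If $N'\cong \nfd$ is such a minor, then either some element of $T$ is not in $E(N')$, so \cite[Lemma~2.13]{OSV00} and the identity $\Delta_{T}(M_{7})\con x \cong M_{7}\del x$ for $x\in T$ give $\nfd$ as a minor of $M_{7}$ — impossible by rank — or $T\subseteq E(N')$, in which case $T$ is a triad of $N'$ and \cite[Corollary~2.17]{OSV00} delivers $\nabla_{T}(N')\cong F_{7}^{-}$ as a minor of $\nabla_{T}(\Delta_{T}(M_{7})) = M_{7}$, contradicting the triangle-count fact above. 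The technical delicacy lies in keeping the $\Delta_{T}/\nabla_{T}$ bookkeeping straight; the invariant of $4 \neq 6$ triangles between $M_{7}$ and $F_{7}^{-}$ is what closes the final case.
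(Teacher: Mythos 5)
Your skeleton is the same as the paper's: Theorem \ref{thm:growfan} for $3$-connectivity, $M_{7}\del 1\cong P_{6}$ for the positive part, and Theorem \ref{thm:nofanfan} to reduce the negative part to a finite check on $M_{7}$ and $\Delta_{T}(M_{7})$ (you are in fact more careful than the paper in verifying the no-$4$-element-fan hypothesis for each member of $\mathcal{O}\smin\{P_{6}\}$). Where you diverge is in the tactics for the individual cases. The paper's key simplification, which you miss, is that $M_{7}$ and $\Delta_{T}(M_{7})$ are both $\GF(8)$-representable ($\Delta\text{-}Y$ preserves representability over a field), while $F_{7}^{-}$ and $\nfd$ are not representable in characteristic $2$; this kills both matroids in both hosts in one line, whereas you need the triangle count $4\neq 6$ for $M_7$ and a $\nabla_{T}$ detour for $\Delta_{T}(M_{7})$ (your $\nabla_T$ argument is valid, and mirrors the end of Lemma \ref{lem:M11}). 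For $U_{4,6}$ in $\Delta_{T}(M_{7})$ the paper also argues via $\nabla_{T}$: if $U_{4,6}$ were a minor, $T$ would be one of its triads, so $M_{7}=\nabla_{T}(\Delta_{T}(M_{7}))$ would have a $P_{6}\cong\nabla_{T}(U_{4,6})$-minor containing $T$, contradicting the fact that the only $P_{6}$-minor of $M_{7}$ is $M_{7}\del 1$ with $1\in T$. Your alternative of checking all seven single-element deletions is sound in principle but is the thinnest point of your sketch: the one witness you offer does not work uniformly, since $\{1,2,4\}$ is itself a rank-$3$ flat (hence a hyperplane) of $\Delta_{T}(M_{7})$ — note $\closure_{M_7}(T)=T$ — so $\{3,5,6\}$ genuinely \emph{is} a triad of $\Delta_{T}(M_{7})\del 7$, and that deletion needs a different certificate. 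This is a fixable bookkeeping matter in a finite check, not a flaw in the approach, but the paper's $\nabla_{T}(U_{4,6})\cong P_{6}$ argument avoids it entirely.
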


\begin{proof}
Once again, the only way the lemma can fail is if
$M_{7}$ or $\Delta_{T}(M_{7})$ contains a minor in
$\mathcal{O}-\{P_{6}\}$.
(Note that $M_{7}\del 1\cong P_{6}$.)
As $M_{7}$ and $\Delta_{T}(M_{7})$ have only $7$
elements, any such minor must be isomorphic to
$U_{2,6}$, $U_{4,6}$, $F_{7}^{-}$, or $\nfd$.
But $M_{7}$ and $\Delta_{T}(M_{7})$ are $\GF(8)$-representable, and
therefore contain no $F_{7}^{-}$-minors or
$\nfd$-minors.
Obviously $M_{7}$ does not contain $U_{4,6}$.
It does not contain $U_{2,6}$, as every element is on
at least one triangle.
If $\Delta_{T}(M_{7})$ contains a $U_{2,6}$ minor, then
we must contract an element from $T$ to obtain
this minor, as $U_{2,6}$ has no triads.
But contracting an element from $T$ produces a
matroid isomorphic to a minor of $M_{7}$,
implying that $M_{7}$ has a $U_{2,6}$-minor.
Assume $\Delta_{T}(M_{7})$ has a $U_{4,6}$-minor.
By the previous argument, $T$ must be a triad of this
minor.
Then $M_{7}=\nabla_{T}(\Delta_{T}(M_{7}))$ has
a minor isomorphic to $\nabla_{T}(U_{4,6})\cong P_{6}$
that contains $T$.
But it is easy to see that the only way to obtain a
$P_{6}$-minor from $M_{7}$ is to delete the element
on three triangles, namely $1$.
As $1\in T$, this is a contradiction.
\end{proof}

\section{Proofs of the main results}\label{sec:final}

\begin{proof}[Proof of Theorem {\rm \ref{thm:regsuper}}]
If $M\in \ex(\{U_{2,4},F_{7}^{*}\})-\ex(\{U_{2,4},F_{7},F_{7}^{*}\})$
is $3$-connected, then $M$ has an $F_{7}$-minor, and
Proposition~\ref{prop:f7splitter2} implies that
$M$ is isomorphic to $F_{7}$.
Therefore $\{F_{7}\}$ is certainly superfluous.
As $\ex(\{U_{2,4},F_{7}\})-\ex(\{U_{2,4},F_{7},F_{7}^{*}\})$
consists of the duals of the matroids
in $\ex(\{U_{2,4},F_{7}^{*}\})-\ex(\{U_{2,4},F_{7},F_{7}^{*}\})$,
it follows that $\{F_{7}^{*}\}$ is also superfluous.
Since $\ex(\{F_{7},F_{7}^{*}\})-\ex(\{U_{2,4},F_{7},F_{7}^{*}\})$
contains all non-binary rank-$2$ uniform matroids, $\{U_{2,4}\}$
is contained in no superfluous subset.
Similarly,
$\ex(\{U_{2,4}\})-\ex(\{U_{2,4},F_{7},F_{7}^{*}\})$
contains all binary projective geometries.
Therefore $\{F_{7},F_{7}^{*}\}$ is contained in
no superfluous subset.
The result follows.
\end{proof}

\begin{proof}[Proof of Theorem {\rm \ref{thm:gf3super}}]
Theorem \ref{thm:F7splitter} implies that the only $3$-connected
matroid in
$\ex(\{U_{2,5},U_{3,5},F_{7}^{*}\})-\ex(\{U_{2,5},U_{3,5},F_{7},F_{7}^{*}\})$
is $F_{7}$ itself.
By duality, $F_{7}^{*}$ is the only $3$-connected matroid in
$\ex(\{U_{2,5},U_{3,5},F_{7}\})-\ex(\{U_{2,5},U_{3,5},F_{7},F_{7}^{*}\})$.
Thus $\{F_{7}\}$ and $\{F_{7}^{*}\}$ are superfluous subsets.
On the other hand,
$\ex(\{U_{3,5},F_{7},F_{7}^{*}\})-\ex(\{U_{2,5},U_{3,5},F_{7},F_{7}^{*}\})$
contains all the non-ternary rank-$2$ uniform matroids, so
$\{U_{2,5}\}$ is not contained in any superfluous subset.
Similarly,
$\ex(\{U_{2,5},F_{7},F_{7}^{*}\})-\ex(\{U_{2,5},U_{3,5},F_{7},F_{7}^{*}\})$
contains all the non-ternary corank-$2$ uniform matroids.
Finally,
$\ex(\{U_{2,5},U_{3,5}\})-\ex(\{U_{2,5},U_{3,5},F_{7},F_{7}^{*}\})$
contains all binary projective geometries, so
$\{F_{7},F_{7}^{*}\}$ is not superfluous.
\end{proof}

\begin{proof}[Proof of Theorem {\rm \ref{thm:GOVWcomplement}}]
Theorem \ref{thm:GOVW02} implies that if $M$ is a $3$-connected
matroid in $\ex(\mathcal{O}-\{P_{8},P_{8}^{=}\})-\ex(\mathcal{O})$,
then $M$ is isomorphic to $P_{8}^{=}$ or a minor of
$S(5,6,12)$.
Thus $\{P_{8},P_{8}^{=}\}$ is superfluous.
As
$\ex(\mathcal{O}-\{U_{2,6}\})-\ex(\mathcal{O})$
contains
all rank-$2$ uniform matroids with at least $6$ elements,
$\{U_{2,6}\}$, and by duality $\{U_{4,6}\}$, is not
contained in any superfluous subset.
By Lemma \ref{lem:M9}, the set
$\ex(\mathcal{O}-\{F_{7}^{-}\})-\ex(\mathcal{O})$
contains all matroids of the form $\growfan{T}{n}{M_{8}}$,
so $\{F_{7}^{-}\}$, and by duality $\{\nfd\}$,
is not contained in any superfluous subset.
Finally, Lemma \ref{lem:M8} shows that
$\ex(\mathcal{O}-\{P_{6}\})-\ex(\mathcal{O})$
contains an infinite number of $3$-connected matroids, so
$\{P_{6}\}$ is not contained in any superfluous subset.
\end{proof}

\begin{proof}[Proof of Theorem {\rm \ref{thm:srusuper}}]
Let $M$ be a $3$-connected matroid in
$\ex(\mathcal{S}-\{F_{7},P_{8}\})-\ex(\mathcal{S})$.
If $M$ has an $F_{7}$-minor, then
Theorem \ref{thm:F7splitter} implies that $M\cong F_{7}$.
Hence we assume that $M$ does not have an $F_{7}$-minor, so that
$M$ has a $P_{8}$-minor.
Corollary \ref{cor:P8splitter} says that $M$ is a
minor of $S(5,6,12)$.
Therefore $\{F_{7},P_{8}\}$ is superfluous.
Duality implies that the only $3$-connected matroids in
$\ex(\mathcal{S}-\{F_{7}^{*},P_{8}\})-\ex(\mathcal{S})$
are $F_{7}^{*}$, and minors of
$S(5,6,12)^{*}=S(5,6,12)$, so $\{F_{7}^{*},P_{8}\}$ is superfluous.
However,
$\ex(\mathcal{S}-\{U_{2,5}\})-\ex(\mathcal{S})$
contains infinitely many uniform matroids, and
$\ex(\mathcal{S}-\{F_{7}^{-}\})-\ex(\mathcal{S})$
contains all matroids of the form $\growfan{T}{n}{M_{8}}$.
Duality implies that none of
$\{U_{2,5}\}$, $\{U_{3,5}\}$, $\{F_{7}^{-}\}$,
$\{\nfd\}$ is contained in a superfluous subset.
Finally,
$\ex(\mathcal{S}-\{F_{7},F_{7}^{*}\})-\ex(\mathcal{S})$
contains all binary projective geometries, so
$\{F_{7},F_{7}^{*}\}$ is contained in no superfluous subset.
\end{proof}

\begin{proof}[Proof of Theorem {\rm \ref{thm:nregsuper}}]
Let $M$ be a $3$-connected matroid in
\[\ex(\mathcal{N}-\{F_{7},\agde,(\agde)^{*}\})-\ex(\mathcal{N}).\]
If $M$ contains an $F_{7}$-minor, then Theorem \ref{thm:F7splitter}
implies that $M\cong F_{7}$.
We assume that $M$ has no $F_{7}$-minor.
Then Theorem \ref{thm:agde} says that
$M$ is isomorphic to $\agde$, $\AG(2,3)$, or the
dual of one  of these matroids.
Therefore $\{F_{7},\agde,(\agde)^{*}\}$ is superfluous.
By duality, $\{F_{7}^{*},\agde,(\agde)^{*}\}$ is superfluous.
As
$\ex(\mathcal{N}-\{U_{2,5}\})-\ex(\mathcal{N})$
contains infinitely many uniform matroids, and
$\ex(\mathcal{N}-\{F_{7}^{-}\})-\ex(\mathcal{N})$
contains all matroids of the form
$\growfan{T}{n}{M_{8}}$, none of
$\{U_{2,5}\}$, $\{U_{3,5}\}$, $\{F_{7}^{-}\}$,
$\{\nfd\}$ is contained in a superfluous subset.
Moreover,
$\ex(\mathcal{N}-\{\Delta_{3}(\agde)\})-\ex(\mathcal{N})$
contains all matroids of the form
$\growfan{T}{n}{M_{9}}$, by Lemma \ref{lem:M11}.
Therefore $\{\Delta_{3}(\agde)\}$ is contained in no
superfluous subset.
Again, we observe that
$\ex(\mathcal{N}-\{F_{7},F_{7}^{*}\})-\ex(\mathcal{N})$
contains infinitely many binary matroids, so
the proof is complete.
\end{proof}

We conclude with the remark that, although our characterizations of $\ex(\mathcal{E})$ are strong when $\mathcal{E}$ contains all non-superfluous excluded minors in our class, we have made no attempt to characterize the infinite families. Clearly some of these families are highly structured. For instance, it is known that every rank-3 matroid with a $U_{2,5}$-minor also has a $U_{3,5}$-minor.

\subsection*{Acknowledgements.} Before writing our proofs we experimented to uncover the truth. These experiments were done using the Macek software by Hli\v{n}en\'{y} \cite{Hli04}, and occasionally we queried Mayhew and Royle's database of small matroids \cite{MR08}.


\begin{thebibliography}{10}

\bibitem{Bix79}
R.~E. Bixby,
\newblock On {R}eid's characterization of the ternary matroids,
\newblock J. Combin. Theory Ser. B 26 (1979) 174--204.

\bibitem{Bry75}
T.~Brylawski,
\newblock Modular constructions for combinatorial geometries,
\newblock Trans. Amer. Math. Soc. 203 (1975) 1--44.

\bibitem{GGK}
J.~F. Geelen, A.~M.~H. Gerards, A.~Kapoor,
\newblock The excluded minors for {$\mathrm{GF}(4)$}-representable matroids,
\newblock J. Combin. Theory Ser. B 79 (2000) 247--299.

\bibitem{GOVW00}
J.~F. Geelen, J.~G. Oxley, D.~L. Vertigan, G.~P. Whittle,
\newblock On the excluded minors for quaternary matroids,
\newblock J. Combin. Theory Ser. B 80 (2000) 57--68.

\bibitem{Hal43}
D.~W. Hall,
\newblock A note on primitive skew curves,
\newblock Bull. Amer. Math. Soc. 49 (1943) 935--936.

\bibitem{HMZ11}
R.~Hall, D.~Mayhew, S.~H.~M. van Zwam,
\newblock The excluded minors for near-regular matroids,
\newblock European J. Combin. 32 (2011) 802--830.

\bibitem{Hli04}
P.~Hlin{\v{e}}n{\'y},
\newblock Using a computer in matroid theory research,
\newblock Acta Univ. M. Belii Ser. Math. (2004) 27--44.

\bibitem{MOOW09}
D.~Mayhew, B.~Oporowski, J.~Oxley, G.~Whittle,
\newblock The excluded minors for the class of matroids that are binary or
  ternary,
\newblock European J. Combin. 32 (2011) 891--930.

\bibitem{MR08}
D.~Mayhew, G.~F. Royle,
\newblock Matroids with nine elements,
\newblock J. Combin. Theory Ser. B 98 (2008) 415--431.

\bibitem{oxley11}
J.~Oxley,
\newblock Matroid theory,
\newblock Oxford University Press, New York, second edition (2011).

\bibitem{OSV00}
J.~Oxley, C.~Semple, D.~Vertigan,
\newblock Generalized {$\Delta\text{-}Y$} exchange and {$k$}-regular matroids,
\newblock J. Combin. Theory Ser. B 79 (2000) 1--65.

\bibitem{OVW98}
J.~Oxley, D.~Vertigan, G.~Whittle,
\newblock On maximum-sized near-regular and $\sqrt[6]{1}$-matroids,
\newblock Graphs and Combinatorics 14 (1998) 163--179.

\bibitem{OW00}
J.~Oxley, H.~Wu,
\newblock On the structure of 3-connected matroids and graphs,
\newblock European J. Combin. 21 (2000) 667--688.

\bibitem{PZ08lift}
R.~A. Pendavingh, S.~H.~M. van Zwam,
\newblock Lifts of matroid representations over partial fields,
\newblock J. Combin. Theory Ser. B 100 (2010) 36--67.

\bibitem{SW96}
C.~Semple, G.~Whittle,
\newblock Partial fields and matroid representation,
\newblock Adv. in Appl. Math. 17 (1996) 184--208.

\bibitem{SW96b}
C.~Semple, G.~Whittle,
\newblock On representable matroids having neither {$U\sb {2,5}$}- nor {$U\sb
  {3,5}$}-minors,
\newblock in Matroid theory (Seattle, WA, 1995), volume 197 of
  Contemp. Math., pp. 377--386. Amer. Math. Soc., Providence, RI (1996).

\bibitem{Sey79}
P.~D. Seymour,
\newblock Matroid representation over {$\mathrm{GF}(3)$},
\newblock J. Combin. Theory Ser. B 26 (1979) 159--173.

\bibitem{Sey80}
P.~D. Seymour,
\newblock Decomposition of regular matroids,
\newblock J. Combin. Theory Ser. B 28 (1980) 305--359.

\bibitem{Tuthom}
W.~T. Tutte,
\newblock A homotopy theorem for matroids. {I}, {II},
\newblock Trans. Amer. Math. Soc. 88 (1958) 144--174.

\bibitem{Whi95}
G.~Whittle,
\newblock A characterisation of the matroids representable over
  {$\mathrm{GF}(3)$} and the rationals,
\newblock J. Combin. Theory Ser. B 65 (1995) 222--261.

\bibitem{Whi97}
G.~Whittle,
\newblock On matroids representable over {$\mathrm{GF}(3)$} and other fields,
\newblock Trans. Amer. Math. Soc. 349 (1997) 579--603.

\end{thebibliography}

\end{document}